\documentclass[12pt,oneside]{amsart}

\title{Deformation Theory of the Trivial mod $p$ Galois Representation for $\mathrm{GL}_n$}
\author{Ashwin Iyengar}
\address{King's College London}
\email{ashwin.iyengar@kcl.ac.uk}


\newcommand*{\backref}[1]{}
\newcommand*{\backrefalt}[4]{%
  \ifcase #1 %
    \relax
  \or
    $\uparrow$ #2.%
  \else
    $\uparrow$ #2.%
  \fi%
}

\usepackage[backref,linktoc=all]{hyperref}
\hypersetup{
    colorlinks,
    citecolor = cyan,
    filecolor = black,
    linkcolor = magenta,
    urlcolor  = black
}

\usepackage[alphabetic]{amsrefs}

\usepackage[inner = 2.5cm, outer = 2.5cm, bottom = 3cm, footskip = 1cm]{geometry}
\geometry{letterpaper}
\usepackage{amsmath, bm, amssymb, amsthm, graphicx, mathrsfs, eucal, cancel,stmaryrd}
\usepackage{tikz-cd}

\usepackage[shortlabels]{enumitem}

\usepackage{textgreek}

\usepackage[utf8]{inputenc}
\usepackage[T1]{fontenc}

\newtheorem{theorem}{Theorem}[section]
\newtheorem{corollary}[theorem]{Corollary}
\newtheorem{lemma}[theorem]{Lemma}
\newtheorem{proposition}[theorem]{Proposition}
\newtheorem{conjecture}[theorem]{Conjecture}

\theoremstyle{definition}
\newtheorem{definition}[theorem]{Definition}
\newtheorem{remark}[theorem]{Remark}

\DeclareMathOperator{\Spec}{Spec}
\DeclareMathOperator{\Spf}{Spf}
\DeclareMathOperator{\ad}{ad}

\begin{document}

\begin{abstract}
We study the rigid generic fiber $\mathcal{X}^\square_{\overline\rho}$ of the framed deformation space of the trivial representation $\overline\rho: G_K \to \mathrm{GL}_n(k)$ where $k$ is a finite field of characteristic $p>0$ and $G_K$ is the absolute Galois group of a finite extension $K/\mathbf{Q}_p$. Under some mild conditions on $K$ we prove that $\mathcal{X}^\square_{\overline\rho}$ is normal. When $p > n$ we describe its irreducible components, and show Zariski density of its crystalline points.
\end{abstract}

\maketitle

\tableofcontents

\section{Introduction}

The $p$-adic local Langlands correspondence for $\mathrm{GL}_n(\mathbf{Q}_p)$ is a hypothetical correspondence between continuous unitary $L$-Banach space representations of $\mathrm{GL}_n(\mathbf{Q}_p)$ and continuous representations $\mathrm{Gal}(\overline{\mathbf{Q}_p}/\mathbf{Q}_p) \to \mathrm{GL}_n(L)$, where $L$ is a $p$-adic local field. Thus far, such a correspondence is only known to exist when $n = 1,2$. When $n = 1$ the construction boils down to local class field theory, and when $n = 2$ such a correspondence has been defined and studied by Colmez in \cite{Col2}. A candidate correspondence (which \textit{a priori} may depend on various choices) for $\mathrm{GL}_n(\mathbf{Q}_p)$ in one direction has been defined in \cite{Car} when $p \nmid 2n$.

Crystalline Galois representations are in the image of Colmez's correspondence for $\mathrm{GL}_2(\mathbf{Q}_p)$. To show that the correspondence is surjective, Kisin suggested that one should try to show that the crystalline representations form a Zariski dense subset of the space of all representations. This was carried out in \cite{Kis10} and \cite{Col2}, except for when $p = 2$ and the $\mathrm{Gal}(\overline{\mathbf{Q}_2}/\mathbf{Q}_2)$ representation has trivial mod $2$ reduction up to semisimplification and twisting by a character. This was resolved in \cite{CDP} by studying the universal deformation ring of the mod 2 trivial representation.

In this paper, we describe the framed deformation ring of the mod $p$ trivial representation of $\mathrm{Gal}(\overline{K}/K)$ for $K$ a $p$-adic local field where now $p$ and $n$ are allowed to vary. Assuming $p > n$ and assuming some mild conditions on $K$, we first give a description of its irreducible components, which answers a particular case of a question of B\"ockle and Juschka written in the introduction to \cite{BJ}. We then prove that crystalline points are dense in the deformation space associated to the trivial representation. We hope that one day this result could be similarly useful once a $p$-adic local Langlands correspondence for $\mathrm{GL}_n(K)$ has been constructed and developed. Since our results are about unrestricted local deformation spaces at $p$, we also hope the results could be useful when proving automorphy lifting theorems for $p$-adic automorphic forms.

In the rest of the introduction we will state the two main theorems more precisely and briefly discuss the methods used in the proofs.

\subsection{Main Results}

Fix a prime $p$, and finite extensions $K/\mathbf{Q}_p$ and $L/\mathbf{Q}_p$. Let $\mathscr{O}_L$ denote the ring of integers of $L$ with maximal ideal $\mathfrak{m}_L$ and uniformizer $\varpi_L$, and let $k_L = \mathscr{O}_L/\mathfrak{m}_L$ denote the residue field. Let $\mu_{p^\infty}(F)$ denote the $p$-power roots of unity in any field $F$. Throughout this paper we will assume $|\mu_{p^\infty}(L)| \geq |\mu_{p^\infty}(K)|$.

Given any continuous residual representation $\overline\rho: G_K := \mathrm{Gal}(\overline{K}/K) \to \mathrm{GL}_n(k_L)$, we can define the framed deformation functor $D_{\overline\rho}^\square$ on the category of complete local Noetherian $\mathscr{O}_L$-algebras with residue field $k_L$, which is always represented by a complete local Noetherian ring $R^\square_{\overline\rho}$.

Let $\mathcal{X}^\square_{\overline\rho}$ denote the rigid generic fiber associated to $\Spf R^\square_{\overline\rho}$ in the sense of Berthelot (see \cite[Section 7]{DeJ} for a construction). In \cite{BJ}, B\"ockle and Juschka pose the following question about the geometry of $\mathcal{X}^{\square}_{\overline\rho}$, which we state as a conjecture (this is actually a slight rephrasing of what they write: see Remark \ref{caveats}(1)).

\begin{conjecture}[{{\cite[Question 1.10]{BJ}}}]\label{BJ}
For any prime $p$, any positive integer $n$, any finite extension $K/\mathbf{Q}_p$ and any continuous $\overline\rho: G_K \to \mathrm{GL}_n(k_L)$, let $d: \mathcal{X}^{\square}_{\overline\rho} \to \mathcal{X}_{\det\overline\rho}^\square$ denote the map induced by mapping a deformation of $\overline\rho$ to its determinant. Then $d$ induces a bijection between the irreducible components of both spaces, and the irreducible and connected components of $\mathcal{X}^\square_{\overline\rho}$ coincide.
\end{conjecture}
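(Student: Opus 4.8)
The plan is to split the conjecture into its two assertions --- that $d$ is a bijection on irreducible components, and that the irreducible and connected components of $\mathcal X^\square_{\overline\rho}$ coincide --- and to reduce both to the equidimensionality of $\mathcal X^\square_{\overline\rho}$, its normality, and an explicit understanding of the target. For the first: the standard presentation of $R^\square_{\overline\rho}$ over $\mathscr O_L$ by $\dim_{k_L}H^1(G_K,\ad\overline\rho) + n^2 - \dim_{k_L}H^0(G_K,\ad\overline\rho)$ generators and at most $\dim_{k_L}H^2(G_K,\ad\overline\rho)$ relations, together with the local Euler characteristic formula, shows every component of $\Spec R^\square_{\overline\rho}[1/p]$ has dimension at least $n^2(1+[K:\mathbf Q_p])$, with equality (and equidimensionality) in the cases where the presentation is explicit; the same count for $\ad^0\overline\rho$ makes every fibre of $d$ of dimension $(n^2-1)(1+[K:\mathbf Q_p])$. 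The target $\mathcal X^\square_{\det\overline\rho}$ is the generic fibre of $\mathscr O_L[[x_1,\dots,x_{1+[K:\mathbf Q_p]}]]\,\widehat\otimes_{\mathscr O_L}\mathscr O_L[y]/((1+y)^{|\mu_{p^\infty}(K)|}-1)$, hence a finite disjoint union of open polydiscs whose irreducible and connected components both equal $\pi_0$, in bijection with the set of $G_L$-orbits on $\mathrm{Hom}(\mu_{p^\infty}(K),\overline L^\times)$; the standing hypothesis $|\mu_{p^\infty}(L)|\ge|\mu_{p^\infty}(K)|$ makes this Galois action trivial, so there are exactly $|\mu_{p^\infty}(K)|$ components.

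The structural tool is the twisting action of the rigid-analytic group $T:=\mathcal X^\square_{\mathbf 1}$ on $\mathcal X^\square_{\overline\rho}$ by $\rho\mapsto\rho\otimes\eta$: the morphism $d$ is equivariant for the action of $T$ on $\mathcal X^\square_{\det\overline\rho}$ through $\eta\mapsto\eta^n$, the fibre of $d$ over the point attached to a lift $\chi_0$ of $\det\overline\rho$ valued in $\mathscr O_L$ is the fixed-determinant space $\mathcal X^{\square,\chi_0}_{\overline\rho}$, and two fibres over characters in a common coset of $T^n$ are $T$-equivariantly isomorphic. Surjectivity of $d$ on components is immediate when $p\nmid n$: then $\ad\overline\rho=\ad^0\overline\rho\oplus k_L\cdot\mathrm{Id}$, the determinant morphism of deformation functors is formally smooth, and $d$ is surjective. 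When $p\mid n$ I would instead, for each component of the target, produce one lift of $\overline\rho$ realizing it, using a profinite presentation of $G_K$ to build lifts with prescribed determinant on the cyclic $p$-power torsion subgroup of $G_K^{\mathrm{ab}}$; since $\det\overline\rho$ is trivial there, the relevant determinant characters are all lifts of the trivial one and there is ample room --- but this is the first genuinely delicate point.

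The heart of the matter --- and the reason the conjecture remains open in general --- is the normality of $\mathcal X^\square_{\overline\rho}$: a connected normal rigid space is irreducible, which gives ``irreducible $=$ connected''. For $\overline\rho=\mathbf 1_n$ this is accessible via the maximal pro-$p$ quotient $G_K(p)$. If $\mu_p\not\subset K$ then $G_K(p)$ is free pro-$p$ of rank $1+[K:\mathbf Q_p]$, so $R^\square_{\mathbf 1_n}$ is a power series ring, $\mathcal X^\square_{\mathbf 1_n}$ is a single polydisc, and $\pi_0=1=|\mu_{p^\infty}(K)|$. If $\mu_p\subset K$ then $G_K(p)$ is a Demushkin group of rank $2+[K:\mathbf Q_p]$ with one relation, so $R^\square_{\mathbf 1_n}$ is cut out in a polydisc over $\mathscr O_L$ by the $n^2$ entries of a single matrix word built around a matrix $q$-th power map, $q=|\mu_{p^\infty}(K)|$; the problem becomes to bound the codimension of the singular locus of this word on the generic fibre (whence normality by Serre's criterion, S2 coming from the complete-intersection property) and to count its components --- the $q$ components arising from the possible values of the determinant of the distinguished generator. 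It is here that $p>n$ and the mild conditions on $K$ are used, e.g. to control where the $q$-th power map degenerates and to exhibit the $|\mu_{p^\infty}(K)|$ ``central twist'' components. For general $\overline\rho$ one loses this presentation; I would attempt normality either by descent along the splitting field of $\overline\rho$, where $\overline\rho$ becomes a direct sum of pieces --- the obstacle being that passing to $\mathrm{Gal}$-invariants need not preserve normality --- or by bounding directly the singular locus of the equations arising from a profinite presentation of $G_K$, which I expect to be the realistic but hard route.

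To assemble the argument: normality yields ``irreducible $=$ connected'' for $\mathcal X^\square_{\overline\rho}$; each fixed-determinant fibre $\mathcal X^{\square,\chi_0}_{\overline\rho}$ is connected because it is equidimensional and contains a Zariski-dense set of crystalline points --- whose existence and density I would prove in tandem, using the smoothness and irreducibility of Kisin's crystalline deformation loci and a twist of the Hodge--Tate weights linking them --- all lying in one component; and since $\mathcal X^\square_{\overline\rho}$ is Cohen--Macaulay in the cases treated (being a complete intersection) while the target is smooth, the miracle-flatness criterion makes $d$ flat, hence open. Being flat, surjective onto each component of $\mathcal X^\square_{\det\overline\rho}$, and with connected fibres, $d$ carries the preimage of each component of the target to a connected subset, so it induces a bijection on irreducible components. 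This proves both parts of the conjecture whenever the normality step goes through --- centrally for $\overline\rho=\mathbf 1_n$ with $p>n$ and $|\mu_{p^\infty}(L)|\ge|\mu_{p^\infty}(K)|$.
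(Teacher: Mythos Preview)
The statement you are attempting is a \emph{conjecture} in the paper, not a theorem: the paper does not prove it in the stated generality, only the special case $\overline\rho=\mathbf{1}_n$ under the hypotheses of Theorem~\ref{FirstMainTheorem}. So there is no full proof to compare against, only the paper's argument for that special case.

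For that case your outline correctly identifies most of the structural ingredients the paper actually uses: the Demu\v{s}kin presentation of $G_K^p$ giving $R^\square_{\overline\rho}$ as a complete intersection (hence Cohen--Macaulay), the bound on the singular locus to get regularity in codimension~$1$, and Serre's criterion for normality. Where you diverge is in the argument that each $X_\zeta$ is connected. The paper does \emph{not} use miracle flatness or density of crystalline points for this. Instead it (i) passes to a closed subspace $V_\zeta$ cut out by a regular sequence, invoking Cohen--Macaulayness to guarantee every component of $X_\zeta$ meets $V_\zeta$; (ii) connects any point of $V_\zeta$ to a diagonal point by explicit rigid-analytic paths (conjugation by an affinoid $\mathrm{GL}_n$ and by $\mathrm{diag}(t^{n-1},\dots,1)$); and (iii) reduces the remaining diagonal case to the $n=2$ fixed-determinant result of B\"ockle--Juschka.

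Your proposed route---show the fibres of $d$ are connected because crystalline points are Zariski-dense in them and all lie in one component---has a genuine gap. In the paper's logic, density of crystalline points (Theorem~\ref{SecondMainTheorem}) is a \emph{consequence} of the component description, proved by first showing $\overline{\mathcal X}^\square_{\mathrm{reg}-\mathrm{cris}}$ is a union of components (via trianguline deformation theory) and then exhibiting a crystalline point on each component already known from Theorem~\ref{maintheorem}. Using crystalline density to establish connectedness of the fibres would be circular unless you supply an independent proof of density; your sketch (``smoothness and irreducibility of Kisin's crystalline loci and a twist of the Hodge--Tate weights linking them'') does not do this, since Kisin's fixed-weight crystalline loci are smooth but not known to be irreducible, and no mechanism is given for linking points across different weights without already knowing the ambient components.
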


This should work as follows. Fix a generator $\zeta_K \in \mu_{p^\infty}(K)$ and an element $\zeta \in \mu_{p^\infty}(L)$ and define the subfunctor
	\[ D^{\square,\zeta}_{\overline\rho}(R) = \{\rho \in D^{\square}_{\overline\rho}(R) : \det\rho(\mathrm{rec}_K(\zeta_K)) = \zeta\} \subset D_{\overline\rho}^\square(R) \]
where $\mathrm{rec}_K$ is the local Artin reciprocity map. Then $D^{\square,\zeta}_{\overline\rho}(R)$ is relatively representable, and one should be able to show that the generic fiber $\mathcal{X}^{\square,\zeta}_{\overline\rho}$ is irreducible. Thus the components should be parametrized by the set $\mu_{p^\infty}(K)$.

Our first main result is a special case of the conjecture, and the proof is given in Section \ref{theproof}:

\begin{theorem}\label{FirstMainTheorem}
Assume $K$ contains a primitive $4$th root of unity if $p = 2$. Let $\overline\rho: G_K \to \mathrm{GL}_n(k_L)$ be the trivial representation. Then $\mathcal{X}_{\overline\rho}^\square$ is normal, and in particular its connected components are irreducible.

If in addition either $p > n$ or $\mu_{p^\infty}(K) = \{1\}$, then the determinant map $\mathcal{X}^{\square}_{\overline\rho} \to \mathcal{X}_{\det\overline\rho}^\square$ induces a bijection on irreducible components, which are canonically in bijection with $\mu_{p^\infty}(K)$.
\end{theorem}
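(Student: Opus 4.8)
The plan is to reduce everything to the structure of the maximal pro-$p$ quotient $G_K(p)$ of $G_K$. Since $\overline\rho$ is trivial, any deformation $\rho\colon G_K\to\mathrm{GL}_n(R)$ has image in the pro-$p$ group $1+M_n(\mathfrak m_R)$, so it factors through $G_K(p)$; hence $R^\square_{\overline\rho}$ pro-represents $R\mapsto\mathrm{Hom}_{\mathrm{cont}}(G_K(p),1+M_n(\mathfrak m_R))$. If $\mu_p\not\subset K$ (equivalently $p$ is odd and $\mu_{p^\infty}(K)=\{1\}$) then $G_K(p)$ is free pro-$p$ on $d+1$ generators, where $d=[K:\mathbf{Q}_p]$, so $R^\square_{\overline\rho}$ is a power series ring over $\mathscr{O}_L$ and $\mathcal{X}^\square_{\overline\rho}$ is a smooth connected open polydisc; this settles normality, irreducibility, and (both spaces being polydiscs) the determinant statement in this case. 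So assume $\mu_p\subset K$; then $G_K(p)$ is a Demushkin group, and the hypothesis that $K$ contains a primitive $4$th root of unity when $p=2$ forces the Demushkin invariant $q:=|\mu_{p^\infty}(K)|$ to satisfy $q>2$, so $G_K(p)$ admits the standard one-relator presentation $\langle x_1,\dots,x_g\mid x_1^{q}[x_1,x_2][x_3,x_4]\cdots[x_{g-1},x_g]\rangle$ with $g=d+2$ even.

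Write $A_i=I+X_i$ for $n\times n$ matrices of indeterminates and $r$ for the relator, so $R^\square_{\overline\rho}\cong\mathscr{O}_L[[X_1,\dots,X_g]]/J$ with $J$ generated by the $n^2$ entries of $r(A_1,\dots,A_g)-I$. I would first record that $R^\square_{\overline\rho}$ is a complete intersection, flat over $\mathscr{O}_L$, of relative dimension $n^2(1+d)$: the lower bound is the standard one for local framed deformation rings at $p$ (via the local Euler characteristic formula for $\ad\overline\rho$), and the matching upper bound amounts to checking that the $n^2$ relations cut the special fibre down to dimension $n^2 d$. In particular $R^\square_{\overline\rho}$ and $R^\square_{\overline\rho}[1/p]$ are Cohen--Macaulay, hence satisfy $(S_2)$; by Serre's criterion, normality of $\mathcal{X}^\square_{\overline\rho}$ reduces to $(R_1)$, i.e. to showing that the non-smooth locus has codimension $\geq 2$. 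At a point of $\mathcal{X}^\square_{\overline\rho}$ given by $\rho\colon G_K\to\mathrm{GL}_n(E)$, the complete local ring is the framed deformation ring of $\rho$, which is formally smooth over $E$ once $H^2(G_K,\ad\rho)=0$; by Tate duality $H^2(G_K,\ad\rho)\cong H^0(G_K,\ad\rho(1))^\vee$, so this fails only if $\rho$ is reducible and shares a Jordan--Hölder constituent with $\rho\otimes\chi_{\mathrm{cyc}}$, forcing a pair of characters $\psi_i,\psi_j$ deforming the trivial character with $\psi_i=\psi_j\chi_{\mathrm{cyc}}$. As such characters vary in $(d+1)$-dimensional families and $\chi_{\mathrm{cyc}}$ is fixed of infinite order, this condition cuts out subvarieties of codimension $\geq d+1\geq 2$, giving $(R_1)$. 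Thus $\mathcal{X}^\square_{\overline\rho}$ is normal, and since a connected normal rigid space is irreducible, its connected components are irreducible; this is the first assertion.

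For the second assertion (the case $\mu_{p^\infty}(K)=\{1\}$ is already done) I would use $p>n$, which gives $\gcd(n,q)=1$: then raising to the $n$th power is an automorphism of the abelian pro-$p$ group $1+\mathfrak m_R$, and $A\mapsto\big(\det A,\ (\det A)^{-1/n}A\big)$ defines a $G_K$-equivariant isomorphism $1+M_n(\mathfrak m_R)\cong(1+\mathfrak m_R)\times\{B\in 1+M_n(\mathfrak m_R):\det B=1\}$. This yields $R^\square_{\overline\rho}\cong R^\square_{\det\overline\rho}\,\widehat\otimes_{\mathscr{O}_L}\,R^{\square,\mathrm{SL}_n}_{\overline\rho}$, hence $\mathcal{X}^\square_{\overline\rho}\cong\mathcal{X}^\square_{\det\overline\rho}\times\mathcal{X}^{\square,\mathrm{SL}_n}_{\overline\rho}$ with $d\colon\mathcal{X}^\square_{\overline\rho}\to\mathcal{X}^\square_{\det\overline\rho}$ the first projection. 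A direct abelian computation gives $R^\square_{\det\overline\rho}\cong\mathscr{O}_L[[T_1,\dots,T_{d+1},S]]/((1+S)^q-1)$, and since $\mu_q\subset L$ (here the standing hypothesis $|\mu_{p^\infty}(L)|\geq|\mu_{p^\infty}(K)|$ is used) the polynomial $(1+S)^q-1$ splits into $q$ distinct linear factors over $\mathscr{O}_L$; thus $\mathcal{X}^\square_{\det\overline\rho}$ is a disjoint union of $q$ open polydiscs, canonically indexed by $\mu_{p^\infty}(K)$ via $\zeta_K$ and $\mathrm{rec}_K$ (the Demushkin generator $x_1$ being $\mathrm{rec}_K(\zeta_K)$). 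It therefore remains to prove $\mathcal{X}^{\square,\mathrm{SL}_n}_{\overline\rho}$ is irreducible. Its coordinate ring is again a complete intersection flat over $\mathscr{O}_L$, so its mod $\varpi_L$ fibre is a complete intersection, hence $(S_1)$; if this fibre is moreover reduced then the ring is reduced and, being complete local and $\mathscr{O}_L$-flat with reduced special fibre, its rigid generic fibre is connected. Combined with normality of $\mathcal{X}^{\square,\mathrm{SL}_n}_{\overline\rho}$ (the $(R_1)+(S_2)$ argument of the second paragraph, now for $\ad^0$), connectedness gives irreducibility. Hence $\mathcal{X}^\square_{\overline\rho}\cong\coprod_{\mu_{p^\infty}(K)}\big(\text{polydisc}\times\mathcal{X}^{\square,\mathrm{SL}_n}_{\overline\rho}\big)$ has exactly $|\mu_{p^\infty}(K)|$ irreducible components, carried isomorphically onto those of $\mathcal{X}^\square_{\det\overline\rho}$ by $d$.

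The main obstacle is the pair of codimension estimates on non-smooth loci: the characteristic-zero estimate for $(R_1)$ in the second paragraph, and --- more seriously --- its positive-characteristic counterpart needed to see that the special fibre of the $\mathrm{SL}_n$-deformation ring is reduced (equivalently, that its singular locus has positive codimension), where the clean input ``$\chi_{\mathrm{cyc}}$ has infinite order'' is unavailable because $\overline\chi_{\mathrm{cyc}}$ is trivial (as $\mu_p\subset K$). Here one must instead analyse the mod $p$ deformation space directly, stratifying by the reducibility type of the representation deforming the trivial one and bounding the loci where $\mathrm{Hom}$ between constituents jumps; along the way one must also verify, by an explicit computation with the Demushkin relator modulo $p$, the complete-intersection and dimension claims used above.
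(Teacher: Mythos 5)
Your reduction to the Demu\v{s}kin presentation, the complete-intersection/flatness claims, and the use of Serre's criterion all match the paper, and your splitting $1+\mathrm{M}_n(\mathfrak m_R)\cong(1+\mathfrak m_R)\times\{\det=1\}$ for $p>n$ is a genuinely different (and appealing) route to counting components than the paper's, which instead connects points of each determinant-fibre by explicit paths (conjugation to upper-triangular form, scaling away the unipotent part, and invoking the B\"ockle--Juschka $n=2$ result to merge diagonal entries). However, both halves of your argument have genuine gaps at exactly the points you flag, and in each case the flagged step is the substance of the proof rather than a technicality.

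For $(R_1)$: the assertion that the singular locus has ``codimension $\geq d+1\geq 2$'' is not a proof and is not even the right bound. First, the Jordan--H\"older constituents of $\rho_x$ need not be characters, so the condition $H^0(G_K,\ad\rho_x\otimes\epsilon)\neq 0$ does not reduce to a relation $\psi_i=\psi_j\chi_{\mathrm{cyc}}$ between characters. Second, to bound the singular locus one must actually bound the locus of reducible points, which requires parametrizing pairs (representation, stable flag) over all parabolics and computing $\dim Z^1(G_K,\mathfrak p)$ via Euler characteristic and Tate duality; this yields codimension $\geq (d-1)(n^2-\dim\mathfrak p)$, not $d+1$. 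That bound equals $1$ when $d=2$ and $n=2$ (i.e.\ $K=\mathbf{Q}_2(\mu_4)$ or $\mathbf{Q}_3(\mu_3)$), so a further refinement (using that the two constituents must differ by $\epsilon$) is needed in those cases. For the components: your strategy requires the special fibre of the $\mathrm{SL}_n$-framed deformation ring to be (generically) reduced in order to deduce connectedness of the generic fibre from henselianness. This is a characteristic-$p$ statement about the mod $\varpi_L$ deformation space, where the Tate-duality/smoothness mechanism is unavailable because $\overline\epsilon$ is trivial (as $\mu_p\subset K$); nothing you have established implies it, and it is substantially harder than the theorem itself. The paper's path-connecting argument is designed precisely to avoid ever looking at the special fibre beyond its dimension, and its use of $p>n$ is different from yours: it is needed to show that the eigenvalues of $M_1$ in the relation $M_2M_1M_2^{-1}=M_1^{q+1}$ are $q$th roots of unity, which is what makes the explicit paths work. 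As written, your proposal reduces the theorem to two unproved statements, one of which (reducedness of the special fibre) is not known to be tractable.
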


Conjecture \ref{BJ} is resolved in many other cases already, and we give a brief survey of results here.

\begin{enumerate}
	\item When $n = 1$, the theorem follows straightforwardly from local class field theory. We recall this short and explicit computation in Section \ref{deforming1}.
	\item In the same paper \cite{BJ}, B\"ockle and Juschka verify Conjecture \ref{BJ} when $p > 2$, $n = 2$, $K/\mathbf{Q}_p$ is any finite extension, and $\overline\rho$ is any continuous representation.
	\item The main result of \cite{CDP} is that Conjecture \ref{BJ} is true when $p = 2$, $n = 2$, $K = \mathbf{Q}_2$, and $\overline\rho$ is the trivial representation.
	\item Let $\epsilon: G_K \to k_L^\times$ denote the mod $p$ cyclotomic character. If $p$, $n$, and $K$ are arbitrary and $H^0(G_K,\ad^0\overline\rho \otimes \epsilon) = 0$, then the proof of \cite[Theorem 4.4]{Nak} shows that Conjecture \ref{BJ} is true: in this case the irreducible components are actually smooth. In the paper Nakamura also requires that $\overline\rho$ has only scalar endomorphisms, but this assumption can be removed by adding framings.
\end{enumerate}

A consequence of the description of the irreducible components of $\mathcal{X}^{\square}_{\overline\rho}$ is that its crystalline points, i.e. points whose induced representations are crystalline, are dense in the whole space. More specifically, one first shows that the Zariski closure of the crystalline points are the union of \textit{some} irreducible components, so it then suffices to describe all of the components and write down a crystalline point in each one. For technical reasons, it is usually more convenient to show that \textit{regular} crystalline points are dense, which means that the labeled Hodge-Tate weights are distinct (cf. Definition \ref{benign}). We are thus led to the following conjecture.

\begin{conjecture}\label{DensityConj}
For $(p,n,K,\overline\rho)$ as in Conjecture \ref{BJ}, the set of regular crystalline points is dense in $\mathcal{X}^{\square}_{\overline\rho}$.
\end{conjecture}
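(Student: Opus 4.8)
\medskip

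\noindent\emph{Proof proposal (for the trivial representation).}
The plan is to deduce Conjecture~\ref{DensityConj} for $\overline\rho$ trivial from Theorem~\ref{FirstMainTheorem}. When $\mu_{p^\infty}(K)=\{1\}$ one has $H^0(G_K,\ad^0\overline\rho\otimes\epsilon)=0$ and the statement is already known by \cite[Theorem~4.4]{Nak}, so I assume $\mu_{p^\infty}(K)\neq\{1\}$, which forces $p>n$ (and $K$ to contain a primitive $4$th root of unity when $p=2$). It then suffices to prove: \textbf{(i)} the Zariski closure of the set of regular crystalline points is a union of irreducible components of $\mathcal{X}^\square_{\overline\rho}$; and \textbf{(ii)} every irreducible component of $\mathcal{X}^\square_{\overline\rho}$ contains a regular crystalline point. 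Granting these, normality of $\mathcal{X}^\square_{\overline\rho}$ (Theorem~\ref{FirstMainTheorem}) says connected and irreducible components coincide, so each point lies on a unique component; hence (ii) forces the closure in (i) to be all of $\mathcal{X}^\square_{\overline\rho}$. Theorem~\ref{FirstMainTheorem} also identifies the irreducible components with $\mu_{p^\infty}(K)$ via the determinant map, which is what turns (ii) into an explicit finite problem.

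For (i) I would invoke the trianguline-variety method of Kisin~\cite{Kis10} for $\mathrm{GL}_2$, extended to $\mathrm{GL}_n$ over a general $p$-adic field by Nakamura~\cite{Nak}: regular crystalline points are Zariski dense in the trianguline variety attached to $\overline\rho$ --- because a trianguline representation whose triangulation parameters are locally algebraic with dominant Hodge--Tate weights and non-critical slopes is automatically crystalline, and such parameters form a Zariski-dense locus of the parameter space --- while the image of the trianguline variety in $\mathcal{X}^\square_{\overline\rho}$ has Zariski closure a union of irreducible components. The additional hypothesis $H^0(G_K,\ad^0\overline\rho\otimes\epsilon)=0$ appearing in \cite{Nak} --- which fails here, as $\ad^0\overline\rho=\mathfrak{sl}_n(k_L)$ is the trivial $G_K$-module once $\mu_{p^\infty}(K)\neq\{1\}$ --- serves there only to pin down \emph{which} components occur; that is handled here by Theorem~\ref{FirstMainTheorem} and step (ii) instead.

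For (ii) I would exhibit the crystalline points explicitly as sums of characters. Fix $\zeta\in\mu_{p^\infty}(K)$ and set $\rho=\bigoplus_{i=1}^n\psi_i$, where each $\psi_i\colon G_K\to\overline{\mathbf{Q}_p}^\times$ is a crystalline character with trivial mod-$p$ reduction. By local class field theory $\psi_i$ is determined by a tuple of Hodge--Tate weights $(n_{i,\tau})_{\tau\colon K\hookrightarrow\overline{\mathbf{Q}_p}}$ and a crystalline Frobenius parameter $\alpha_i$, and its reduction is trivial once the $n_{i,\tau}$ lie in a suitable congruence class (say, all divisible by $\#k_K-1$) and $\alpha_i$ reduces to $1$. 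I would choose the $n_{i,\tau}$ so that $n_{1,\tau}>n_{2,\tau}>\cdots>n_{n,\tau}$ for every $\tau$ --- so that $\rho$ is regular with non-critical standard refinement --- and the $\alpha_i$ pairwise distinct. As $\zeta_K$ is a unit, $\det\rho(\mathrm{rec}_K(\zeta_K))=\prod_\tau\tau(\zeta_K)^{-\sum_i n_{i,\tau}}$; since $\tau(\zeta_K)$ has order $|\mu_{p^\infty}(K)|$ and $p\nmid n$ (here $p>n$ is used), the remaining freedom in the weights makes this product any prescribed element of $\mu_{p^\infty}(K)$, in particular $\zeta$. By Theorem~\ref{FirstMainTheorem} the resulting regular crystalline point lies on the component labelled by $\zeta$, which gives (ii).

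I expect the main obstacle to be step (i) for the trivial representation: its adjoint deformation problem is maximally obstructed, so one must check that the trianguline variety nevertheless has the expected dimension and that its image in $\mathcal{X}^\square_{\overline\rho}$ is a union of irreducible components --- the precise point where the geometry supplied by Theorem~\ref{FirstMainTheorem} is needed. Step (ii) is, by comparison, a routine explicit computation with crystalline characters.
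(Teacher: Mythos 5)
Your overall architecture is the paper's: (i) show the Zariski closure of the regular crystalline locus is a union of irreducible components, (ii) exhibit a regular crystalline point on each component, and use Theorem \ref{FirstMainTheorem} to label the components by $\mu_{p^\infty}(K)$ via the determinant at $\mathrm{rec}_K(\zeta_K)$. Your step (ii) is essentially the paper's proof of Theorem \ref{SecondMainTheorem}: direct sums of crystalline characters with weights chosen in a congruence class forcing trivial reduction and with the exponent sum ranging over all residues mod $q$. That part is fine (the aside that $p\nmid n$ is needed there is not: since you allow the $\psi_i$ to differ, you have enough freedom without it; the paper likewise varies only the last summand).

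The genuine gap is step (i), and you have correctly located but not filled it. You cannot cite \cite{Nak} here: Nakamura works with the unframed deformation functor under the standing hypothesis $\mathrm{End}_{k_L[G_K]}(\overline\rho)=k_L$, which fails maximally for the trivial $\overline\rho$, and the condition $H^0(G_K,\ad^0\overline\rho\otimes\epsilon)=0$ is not merely a component-bookkeeping device there --- it is what makes the relevant deformation rings smooth of the expected dimension. Moreover, your claim that ``the image of the trianguline variety in $\mathcal{X}^\square_{\overline\rho}$ has Zariski closure a union of irreducible components'' is precisely the conclusion of the hard argument, not an available input: each individual ``leaf'' of the fern (the image of a neighborhood of one point of the trianguline variety) is of strictly smaller dimension than $\mathcal{X}^\square_{\overline\rho}$. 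The paper's Proposition \ref{RegularCrystallineDense} closes this gap as follows: it replaces Nakamura's finite slope subspace by the framed trianguline variety of \cite{BHS}; at a \emph{benign} crystalline point $x$ (to which one may move by \cite[Lemma 4.2]{Nak} inside the fixed-weight crystalline locus) the condition $\alpha_i\neq p^{\pm f}\alpha_j$ forces $H^2(G_K,\ad\rho_x)=0$, so $x$ is smooth in $\mathcal{X}^\square_{\overline\rho}$ even though the residual problem is maximally obstructed; the $n!$ refinements of $\rho_x$ give $n!$ points of the trianguline variety whose completed local rings are domains (Proposition \ref{triangulinedeformations}, via the local models of \cite{LocalModel}) and accumulate crystalline points (Proposition \ref{densityeigenvariety}); and, decisively, the sum over all refinements of the induced tangent maps surjects onto $T_x\mathcal{X}^\square_{\overline\rho}$ by \cite[Corollary 3.13]{HMS}. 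This last surjectivity is the missing idea in your proposal, and it is supplied by the trianguline/local-model machinery, not by Theorem \ref{FirstMainTheorem}, which plays no role in step (i).
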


As a consequence of Theorem \ref{FirstMainTheorem} we come to our second main result.

\begin{theorem}[Theorem \ref{SecondMainTheorem}]\label{secondref}
Assume $K$ contains a primitive $4$th root of unity if $p = 2$, and assume either $p > n$ or $\mu_{p^\infty}(K) = \{1\}$. Let $\overline\rho$ be the trivial representation. Then the set of regular crystalline points is dense in $\mathcal{X}^\square_{\overline\rho}$.
\end{theorem}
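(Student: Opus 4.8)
The plan is to deduce density from the explicit description of the irreducible components of $\mathcal{X}^\square_{\overline\rho}$ furnished by Theorem \ref{FirstMainTheorem}. First I would recall the general principle, due to Kisin and used systematically in this context (e.g. in \cite{Kis10}, \cite{CDP}, \cite{BJ}): since $\mathcal{X}^\square_{\overline\rho}$ is a Jacobson rigid space, the Zariski closure $Z$ of the set of regular crystalline points is a union of irreducible components. Indeed, for each fixed regular Hodge type $\mathbf{v}$ and (inertial) type, the crystalline-with-prescribed-type locus is cut out by a quotient $R^{\square,\mathrm{cris},\mathbf{v}}_{\overline\rho}$ whose generic fiber is a union of irreducible components of a potentially semistable deformation ring, and Kisin's results show these are finite over the relevant base; letting $\mathbf{v}$ vary over all regular weights produces a set whose closure is a union of components. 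Hence it suffices to exhibit, in each irreducible component of $\mathcal{X}^\square_{\overline\rho}$, at least one regular crystalline point.

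By Theorem \ref{FirstMainTheorem}, under the running hypotheses the irreducible components of $\mathcal{X}^\square_{\overline\rho}$ are in canonical bijection with $\mu_{p^\infty}(K)$ via the determinant map to $\mathcal{X}^\square_{\det\overline\rho}$, and $\mathcal{X}^\square_{\det\overline\rho}$ is the $n=1$ deformation space, whose components are also indexed by $\mu_{p^\infty}(K)$ and each of which visibly contains crystalline (indeed de Rham) characters — this is the explicit class field theory computation of Section \ref{deforming1}. So the second step is: given $\zeta \in \mu_{p^\infty}(K)$, produce a regular crystalline $\rho: G_K \to \mathrm{GL}_n(\overline{L})$ lifting the trivial representation with $\det\rho(\mathrm{rec}_K(\zeta_K)) = \zeta$, i.e.\ landing in the component $\mathcal{X}^{\square,\zeta}_{\overline\rho}$. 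The natural construction is to take a direct sum $\rho = \psi_1 \oplus \cdots \oplus \psi_n$ of crystalline characters $\psi_i: G_K \to \overline{L}^\times$ each congruent to $1$ mod $\mathfrak{m}$, chosen with distinct Hodge–Tate weights (so that $\rho$ is regular) and multiplicatively well-spaced labeled weights, and with $\prod_i \psi_i$ equal to the prescribed crystalline character in the component labeled by $\zeta$. Concretely one can take $\psi_i$ to be an unramified twist of a power of the Lubin–Tate/cyclotomic character raised to a sufficiently $p$-divisible exponent so that the reduction is trivial; since $p > n$ (or $\mu_{p^\infty}(K)=\{1\}$) there is enough room to choose $n$ distinct such exponents and still hit any target determinant. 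One must check these $\psi_i$ are genuinely crystalline with the claimed Hodge–Tate–Sen weights (standard for Lubin–Tate characters) and that the resulting point actually lies on $\mathcal{X}^\square_{\overline\rho}$ — i.e.\ that the framed deformation it defines reduces to $\overline\rho$ — which is immediate from the congruence conditions.

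The main obstacle, and the step requiring the most care, is the first one: verifying that the Zariski closure of the regular crystalline points is a \emph{union of irreducible components} rather than something smaller. This is where one genuinely needs input beyond Theorem \ref{FirstMainTheorem}: one invokes the existence and finiteness properties of Kisin's potentially-crystalline framed deformation rings $R^{\square,\mathrm{cris},\mathbf{v},\tau}_{\overline\rho}$, together with the fact (again a theorem of Kisin, refined by later authors) that a component of $\mathcal{X}^\square_{\overline\rho}$ meeting the crystalline locus is \emph{dominated} by such a ring, so that local density of crystalline points at a single smooth crystalline point of a component forces the whole component into the closure. Because $\mathcal{X}^\square_{\overline\rho}$ is normal (Theorem \ref{FirstMainTheorem}), each component is itself normal, which streamlines this: a normal rigid space is a disjoint union over its components in a neighborhood of any point, so once I have a single regular crystalline point on a component it suffices to know crystalline points are Zariski dense in some neighborhood of it — which follows from the smoothness of the regular crystalline locus at such a point (a Kisin-style tangent space computation, or citation of \cite{Kis10}). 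Assembling: the closure of the regular crystalline points contains a neighborhood of a point on each of the $|\mu_{p^\infty}(K)|$ components, hence contains every component, hence is all of $\mathcal{X}^\square_{\overline\rho}$.
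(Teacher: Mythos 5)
Your second step -- writing down, in each of the $q$ components, an explicit regular crystalline point as a direct sum of crystalline characters that are trivial mod $\mathfrak{m}$ and whose product has the prescribed value on $\mathrm{rec}_K(\zeta_K)$ -- is exactly what the paper does (it uses powers of a single character $\chi_0=(\chi^{\mathbf h})^{\otimes(p^f-1)}$ with $\chi^{\mathbf h}$ algebraic on $\mathscr{O}_K^\times$), and that part of your proposal is fine.

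The gap is in your first step, which is the hard part of the theorem. You assert that it suffices to know that "crystalline points are Zariski dense in some neighborhood" of a single regular crystalline point, and that this "follows from the smoothness of the regular crystalline locus at such a point." It does not: for a fixed regular Hodge type $\mathbf v$, the generic fiber of $R^{\square,\mathrm{cris},\mathbf v}_{\overline\rho}$ is a closed subspace of dimension $n^2+[K:\mathbf{Q}_p]\,n(n-1)/2$, strictly smaller than $\dim\mathcal{X}^\square_{\overline\rho}=n^2(1+[K:\mathbf{Q}_p])$ for $n\geq 2$, so its smoothness gives you nothing about density in the ambient space. One must vary the weights, and the statement that the union over all $\mathbf v$ has closure equal to a union of components is precisely the infinite-fern assertion; it cannot be obtained by citation here, since Kisin's result in \cite{Kis10} is for $n=2$, $K=\mathbf{Q}_p$, and Nakamura's generalization in \cite{Nak} assumes $\mathrm{End}_{k_L[G_K]}(\overline\rho)=k_L$, which fails for the trivial $\overline\rho$. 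This is why the paper devotes Section \ref{crystallinesection} to Proposition \ref{RegularCrystallineDense}: it passes to a smooth \emph{benign} point $x$ of a component of the closure, shows $x$ is smooth in $\mathcal{X}^\square_{\overline\rho}$ (using $\alpha_i\neq p^{\pm f}\alpha_j$ to kill $H^2$), and then proves that the images of the tangent spaces of the $n!$ framed trianguline deformation rings $R^\square_{\rho_x,\mathrm{Fil}^\bullet_\sigma[\frac1t]}$ (one for each refinement $\sigma\in\Sigma_n$) together span all of $T_x\mathcal{X}^\square_{\overline\rho}$, via \cite[Corollary 3.13]{HMS}, combined with the density of crystalline points in the trianguline variety from \cite[Proposition 4.1.4]{LocalModel}. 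Some argument of this type -- producing, at one crystalline point, enough tangent directions that remain inside the closure of the crystalline locus -- is unavoidable, and your proposal does not supply it.
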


As before, the conjecture has been proven in many other cases already:

\begin{enumerate}
	\item Again, when $n = 1$, the result follows because Conjecture \ref{BJ} is true in this case.
	\item In \cite{BJ} for $p > 2$, $n = 2$ and arbitrary $K$ and $\overline\rho$, the authors apply Nakamura's work in \cite{Nak} to prove Conjecture \ref{DensityConj} as a consequence of their proof of Conjecture \ref{BJ}.
	\item In \cite{CDP} for $p = 2$, $n = 2$, $K = \mathbf{Q}_2$ and $\overline\rho$ the trivial representation, the authors apply Kisin's work in \cite{Kis10} to prove Conjecture \ref{DensityConj} as a consequence of their proof of Conjecture \ref{BJ}.
	\item In \cite[Theorem 4.4]{Nak}, Conjecture \ref{DensityConj} is proven when $H^0(G_K, \ad^0\overline\rho \otimes \epsilon) = 0$ and either $\mu_{p^\infty}(K) = \{1\}$ or $p \nmid n$ (as above we ignore the issue of framing). In fact the theorem also requires the existence of crystalline lifts with regular Hodge-Tate weights, which was recently proven in \cite{EG}.
\end{enumerate}

\subsection{Methods}

First, we give an overview of the proof of Theorem \ref{FirstMainTheorem} and some auxiliary results along the way. Following \cite{CDP} we use the description of the maximal pro-$p$ quotient of $G_K$ given in \cite{Dem} to explicitly realize $R^\square_{\overline\rho}$ as a formal moduli space of tuples of matrices satisfying a certain ``Demu\v{s}kin equation'': this is Corollary \ref{structure}. Using this description and some facts about moduli spaces of matrices proven by Helm in \cite{Helm}, we show the following.

\begin{proposition}[Proposition \ref{dimension}]
The ring $R^\square_{\overline\rho}$ is a complete intersection ring of dimension $1+n^2([K:\mathbf{Q}_p]+1)$, and is $\mathscr{O}_L$-flat. In particular, $R^\square_{\overline\rho}$ is Cohen-Macaulay.
\end{proposition}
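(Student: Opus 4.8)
The plan is to recognise $R^\square_{\overline\rho}$ as the completed local ring at the identity of an explicit scheme of tuples of matrices, reduce the entire statement to a single dimension estimate for that scheme, and import the estimate from Helm's work on moduli of matrices. First I would dispose of the case $\mu_{p^\infty}(K)=\{1\}$: here the maximal pro-$p$ quotient of $G_K$ is free pro-$p$ on $[K:\mathbf{Q}_p]+1$ generators, so by Corollary \ref{structure} the ring $R^\square_{\overline\rho}$ is a power series ring over $\mathscr{O}_L$ in $n^2([K:\mathbf{Q}_p]+1)$ variables; such a ring is regular local of dimension $1+n^2([K:\mathbf{Q}_p]+1)$, hence a complete intersection, Cohen-Macaulay and $\mathscr{O}_L$-flat, and there is nothing more to do. Otherwise the maximal pro-$p$ quotient is a Demu\v{s}kin group on $g:=[K:\mathbf{Q}_p]+2$ generators with one defining relation $r$, and Corollary \ref{structure} provides a presentation
\[
R^\square_{\overline\rho}\;\cong\;\mathscr{O}_L\llbracket\,x_{i,j,\ell}\ (1\le i,j\le n,\ 1\le\ell\le g)\,\rrbracket\big/(f_1,\dots,f_{n^2}),
\]
in which $f_1,\dots,f_{n^2}$ are the entries of $r(G_1,\dots,G_g)-\mathrm{Id}$ with $G_\ell=\mathrm{Id}+(x_{i,j,\ell})_{i,j}$. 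Thus $R^\square_{\overline\rho}$ is a quotient of a regular local ring of dimension $1+n^2([K:\mathbf{Q}_p]+2)$ by $n^2$ elements, and Krull's height theorem already yields $\dim R^\square_{\overline\rho}\ge 1+n^2([K:\mathbf{Q}_p]+1)$.

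Next I would reduce the proposition to the single inequality $\dim\big(R^\square_{\overline\rho}/\varpi_L\big)\le n^2([K:\mathbf{Q}_p]+1)$ on the special fibre. Granting this, $R^\square_{\overline\rho}/\varpi_L=k_L\llbracket x_{i,j,\ell}\rrbracket/(\overline f_1,\dots,\overline f_{n^2})$ has dimension exactly $n^2([K:\mathbf{Q}_p]+1)$ (the reverse inequality again being Krull), so $\overline f_1,\dots,\overline f_{n^2}$ is a regular sequence in the regular ring $k_L\llbracket x_{i,j,\ell}\rrbracket$; since $\varpi_L$ is a nonzerodivisor on $\mathscr{O}_L\llbracket x_{i,j,\ell}\rrbracket$ with quotient $k_L\llbracket x_{i,j,\ell}\rrbracket$, it follows that $\varpi_L,f_1,\dots,f_{n^2}$, and hence also the permuted sequence $f_1,\dots,f_{n^2},\varpi_L$, is a regular sequence there. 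Regularity of the initial segment $f_1,\dots,f_{n^2}$ shows $R^\square_{\overline\rho}$ is a complete intersection of dimension $1+n^2([K:\mathbf{Q}_p]+1)$, hence Cohen-Macaulay; regularity of $\varpi_L$ modulo that segment shows $\varpi_L$ is a nonzerodivisor on $R^\square_{\overline\rho}$, hence $R^\square_{\overline\rho}$ is $\varpi_L$-torsion free and so $\mathscr{O}_L$-flat. So all three assertions are consequences of the special-fibre bound.

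It therefore remains to bound the special fibre, which by Corollary \ref{structure} is the completed local ring at the identity of the $k_L$-scheme
\[
Z\;=\;\{\,(g_1,\dots,g_g)\in\mathrm{GL}_{n,k_L}^{g}\ :\ r(g_1,\dots,g_g)=\mathrm{Id}\,\},
\]
and I must show $\dim_{(\mathrm{Id},\dots,\mathrm{Id})}Z\le n^2([K:\mathbf{Q}_p]+1)$. I expect this to be the main obstacle. The identity is the most degenerate point of $Z$: since $\varpi_L$ kills the leading term of the word map coming from $x_1^{q}$, the Zariski tangent space of $Z$ at the identity is the full $n^2([K:\mathbf{Q}_p]+2)$-dimensional space, strictly larger than the expected dimension, so a generic-smoothness argument for the word map $r$ cannot reach this point, and the bound is genuinely a statement about the singularity of the Demu\v{s}kin equation at $(\mathrm{Id},\dots,\mathrm{Id})$. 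The plan is to extract it from Helm's analysis of matrix moduli in \cite{Helm}. Under the standing hypotheses $r$ has the standard shape $r=x_1^{q}[x_1,x_2][x_3,x_4]\cdots[x_{g-1},x_g]$ with $g$ even and Demu\v{s}kin invariant $q=|\mu_{p^\infty}(K)|>2$ (the requirement that $K$ contain a primitive $4$th root of unity when $p=2$ being exactly what forces $g$ even and $q>2$); I would then introduce the matrices in blocks and bound the dimension of each successive fibre using Helm's estimates for the relevant loci — the $q$-th power map on $\mathrm{GL}_n$ and the commutator map $\mathrm{GL}_n\times\mathrm{GL}_n\to\mathrm{SL}_n$, noting that on $Z$ the determinant of $g_1$ is forced to be a $q$-th root of unity, hence trivial over $k_L$ — so that the $n^2$ entries of the relation cut $Z$ down to the expected dimension near the identity. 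With that estimate in hand, the commutative algebra of the preceding paragraph completes the proof.
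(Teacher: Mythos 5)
Your reduction of the proposition to the single special-fibre bound $\dim(R^\square_{\overline\rho}/\varpi_L)\le n^2(d+1)$ is correct, and the commutative algebra you build on top of it (regular sequence in $k_L\llbracket x_{i,j,\ell}\rrbracket$ by a dimension count in a Cohen--Macaulay local ring, prepend $\varpi_L$, permute, deduce complete intersection, Cohen--Macaulayness and $\mathscr{O}_L$-flatness) matches the paper's concluding step. The $q=1$ case and the Krull lower bound are also fine. But the one step you yourself identify as ``the main obstacle'' is exactly the step you do not prove, and the route you sketch for it is not the one that works. You propose to bound $\dim_{(\mathrm{Id},\dots,\mathrm{Id})}Z$ for the full scheme $Z$ of $(d+2)$-tuples by slicing into ``successive fibres'' and invoking Helm's estimates for the $q$-th power map and the commutator map separately. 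Helm's input (Lemma \ref{pairs}) is not of that form: it concerns the single scheme $M(n,q+1)$ of pairs $(X,Y)$ with $XYX^{-1}=Y^{q+1}$, i.e.\ the fibre of the word $(g_1,g_2)\mapsto g_1^q[g_1,g_2]$ over the identity only. A successive-fibre argument for $Z$ would instead require dimension bounds for the fibres of $(g_1,g_2)\mapsto g_1^q[g_1,g_2]$ over an \emph{arbitrary} target $C$ (the inverse of the product of the remaining commutators), together with control of the commutator-word map on $(g_3,\dots,g_{d+2})$; none of this is supplied by the cited result, and as you note the identity is precisely the point where generic-smoothness fails, so the estimate cannot be waved through.

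The paper's actual move sidesteps the problem: instead of bounding $\dim Z$ at the identity, it cuts further, setting $X_3=\dots=X_{d+2}=0$. The resulting quotient $R'=\mathscr{O}_L\llbracket X_1,X_2\rrbracket/(\widetilde{X}_1^q[\widetilde{X}_1,\widetilde{X}_2]-I)$ is, after rewriting the relation as $\widetilde{X}_2\widetilde{X}_1\widetilde{X}_2^{-1}=\widetilde{X}_1^{q+1}$, the completion of $M(n,q+1)_{\mathscr{O}_L}$ at the identity, so Lemma \ref{pairs} gives $\dim R'/\varpi_L=n^2$ directly. Since the $n^2(d+1)+1$ elements consisting of the $n^2$ entries of the relation, the $n^2d$ coordinates of $X_3,\dots,X_{d+2}$, and $\varpi_L$ cut the $(1+n^2(d+2))$-dimensional regular local ring $\mathscr{O}_L\llbracket X_1,\dots,X_{d+2}\rrbracket$ down to dimension $n^2$, they form a regular sequence in any order, and the subsequence consisting of the relation entries alone already yields everything you want about $R^\square_{\overline\rho}$, including your special-fibre bound as a consequence rather than a prerequisite. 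So the proposal is not wrong in its architecture, but the load-bearing estimate is missing, and filling it by the route you indicate would require substantially more than the cited matrix-moduli results provide.
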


Points of $\mathcal{X}_{\overline\rho}^\square$ parametrize characteristic $0$ deformations of $\overline\rho$ valued in finite extensions of $L$. A standard Galois cohomology argument allows us to show that at singular points these deformations are reducible, which allows us to bound the dimension of the singular locus: here we follow the dimension counting technique introduced in \cite[Section 3]{Ger}. Using Serre's criterion for normality, we deduce the following.

\begin{theorem}[Theorem \ref{normal}]\label{main}
$\mathcal{X}^\square_{\overline\rho}$ is normal. In particular, its connected components are irreducible.
\end{theorem}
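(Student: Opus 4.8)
The plan is to verify Serre's criterion for normality, working with the scheme $X := \Spec R^\square_{\overline\rho}[1/p]$, whose normality is equivalent to that of $\mathcal X^\square_{\overline\rho}$. By Proposition \ref{dimension}, $R^\square_{\overline\rho}$ is a complete intersection, hence so is its localization $R^\square_{\overline\rho}[1/p]$; in particular $X$ is Cohen--Macaulay (so satisfies $(S_2)$) and equidimensional, of dimension $d := n^2([K:\mathbf{Q}_p]+1)$ since $R^\square_{\overline\rho}$ is $\mathscr O_L$-flat. It therefore suffices to establish $(R_1)$: that the singular locus of $X$ has codimension at least $2$. Since $X$ is Jacobson (by \cite{DeJ}) and its singular locus is closed, it is enough to argue with closed points.

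First I would identify the smooth locus. For a closed point $x \in X$ with residue field $E$ and associated representation $\rho_x \colon G_K \to \mathrm{GL}_n(E)$, the completed local ring $\widehat{\mathscr O}_{X,x}$ is the universal framed deformation $E$-algebra of $\rho_x$, and hence admits a presentation over $E$ with $\dim_E Z^1(G_K,\ad\rho_x) = \dim_E H^1(G_K,\ad\rho_x)+n^2-\dim_E H^0(G_K,\ad\rho_x)$ generators and at most $\dim_E H^2(G_K,\ad\rho_x)$ relations, all lying in the square of the maximal ideal. Combining the local Euler characteristic formula with equidimensionality of $X$, one finds that $x$ is a smooth point of $X$ if and only if $H^2(G_K,\ad\rho_x)=0$. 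By local Tate duality this group is dual to $H^0(G_K,\ad\rho_x(1)) = \mathrm{Hom}_{G_K}(\rho_x,\rho_x(1))$, where $(1)$ is the Tate twist by the $p$-adic cyclotomic character. A nonzero $G_K$-equivariant map $\rho_x \to \rho_x(1)$ cannot be an isomorphism, since comparing determinants would force the $n$th power of the cyclotomic character to be trivial; hence such a map has a proper nonzero image, and $\rho_x$ is reducible. Consequently the singular locus of $X$ is contained in the union, over $1 \le j \le n-1$, of the closed subschemes $X_j \subseteq X$ parametrising deformations that admit a $G_K$-stable subspace of rank $j$.

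The core of the argument is to bound $\dim X_j$, for which I would follow the dimension-counting technique of \cite[Section~3]{Ger}. Realise $X_j$ as the image of the proper morphism $Z_j \to X$, where $Z_j$ parametrises pairs $(\rho, W)$ with $W$ a rank-$j$ $G_K$-stable subspace of $E^n$. Passing to the $\mathrm{GL}_j \times \mathrm{GL}_{n-j}$-torsor of bases of $W$ and of $E^n/W$, one may assume $\rho$ is block upper-triangular with diagonal blocks $\rho_1,\rho_2$ deforming the trivial representations of ranks $j$ and $n-j$; the off-diagonal block is then a cocycle in $Z^1(G_K,\mathrm{Hom}(\rho_2,\rho_1))$, of dimension $([K:\mathbf{Q}_p]+1)\, j(n-j) + \dim_E H^2(G_K,\mathrm{Hom}(\rho_2,\rho_1))$. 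Feeding in the dimension formula of Proposition \ref{dimension} applied to the ranks $j$ and $n-j$, and bounding, via upper semicontinuity of $h^2$, the loci on which $H^2(G_K,\mathrm{Hom}(\rho_2,\rho_1)) \neq 0$, one obtains $\dim X_j \le d - [K:\mathbf{Q}_p]\, j(n-j)$. This yields $(R_1)$ outside the borderline situation $[K:\mathbf{Q}_p]\, j(n-j)=1$; there one keeps instead the sharper containment of the singular locus in $\{\,\rho : \mathrm{Hom}_{G_K}(\rho,\rho(1))\neq 0\,\}$, which imposes not merely reducibility of $\rho$ but a cyclotomic matching between two subquotients of $\rho$, gaining the missing unit of codimension. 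In all cases the singular locus has codimension $\ge 2$ in $X$, so $X$, and hence $\mathcal X^\square_{\overline\rho}$, is normal; a normal rigid space being locally integral, its connected components are then irreducible.

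I expect the main obstacle to lie in the dimension count of the last paragraph: showing that the strata on which $H^2(G_K,\mathrm{Hom}(\rho_2,\rho_1))$ jumps are small enough — which may require an induction on $n$, reusing the dimension bounds for lower-rank trivial representations — and, in the borderline cases, extracting the extra codimension from the cyclotomic-matching condition rather than from reducibility alone.
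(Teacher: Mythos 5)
Your overall strategy is the paper's: Serre's criterion, with $(S_2)$ coming from the complete-intersection/Cohen--Macaulay statement of Proposition \ref{dimension}; the reduction of $(R_1)$ to bounding the reducible locus via $H^2$ and local Tate duality, exactly as in Proposition \ref{reducible}; a Geraghty-style dimension count on a proper flag bundle over $\Spec R^\square_{\overline\rho}$; and a separate cyclotomic-matching argument in a borderline low-degree case. The one genuine divergence is how you control the obstruction space along the reducible locus. You work only with two-step filtrations and split the count into (i) the diagonal blocks, handled globally by the dimension formula for the smaller trivial-representation deformation spaces, and (ii) the extension class, a cocycle in $Z^1(G_K,\mathrm{Hom}(\rho_2,\rho_1))$ whose dimension jumps by $h^2 := \dim H^2(G_K,\mathrm{Hom}(\rho_2,\rho_1)) = \dim\mathrm{Hom}_{G_K}(\rho_1,\rho_2\otimes\epsilon)$. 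The paper instead refines the filtration until all graded pieces are absolutely irreducible --- this is the point of the spaces $\mathcal{G}^{\mathrm{irr}}_{\underline{n}}$, cf.\ Remark \ref{irr} --- so that the Levi contribution to $H^2(G_K,\mathfrak{p})$ vanishes identically (Proposition \ref{dimr}) and the unipotent contribution can be bounded crudely by $\dim\mathfrak{n}_P$, with no stratification of the base required.

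The gap is exactly where you flag it, and it is not cosmetic. Your asserted bound $\dim X_j \le \dim X - [K:\mathbf{Q}_p]\,j(n-j)$ requires that the stratum of the base on which $h^2\ge m$ have codimension at least $m$; you defer this, and it is genuinely nontrivial because $\rho_1,\rho_2$ need not be irreducible (when $j=n-j$ and $\rho_1\cong\rho_2\otimes\epsilon$ is a sum of characters, $h^2$ can be as large as $j(n-j)$). If you fall back on the unconditional estimate $h^2\le j(n-j)$, your count reproduces the paper's bound, namely codimension at least $([K:\mathbf{Q}_p]-1)\,j(n-j)$, and then the borderline case is $[K:\mathbf{Q}_p]=2$, $n=2$ --- not $[K:\mathbf{Q}_p]\,j(n-j)=1$, which cannot occur since $q\ge 3$ forces $[K:\mathbf{Q}_p]\ge 2$. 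Your cyclotomic-matching repair is the right one for that case (it is what the paper does for $K=\mathbf{Q}_2(\mu_4)$ and $K=\mathbf{Q}_3(\mu_3)$), but it must be deployed there. So: either prove the jumping-locus estimate (presumably by induction on $n$, as you suggest), or adopt the paper's device of passing to filtrations with irreducible graded pieces, which kills the problematic $H^2$ term at the source and lets the crude bound on the unipotent part carry the argument.
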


We then compute the deformation theory of the trivial character $\mathbf{1}: G_K^{\mathrm{ab}} \to k_L^\times$, and show that two closed points in $\mathcal{X}_{\overline\rho}^\square$ whose determinants land in the same connected component of $\mathcal{X}_{\mathbf{1}}^\square$ actually live in the same connected component of $\mathcal{X}^\square_{\overline\rho}$. We show this by reducing to the case $n = 2$, which is handled in \cite{BJ}.

Second, we say a quick word about the proof of Theorem \ref{secondref}. Our proof that the Zariski closure of regular points is a union of some irreducible components of $\mathcal{X}^\square_{\overline\rho}$ is almost exactly the same as Nakamura's proof in \cite{Nak}. His argument is inspired by, and directly generalizes, Chenevier's work in \cite{Che3} and \cite{Che2}, which in turn generalizes Gouv\^{e}a-Mazur's ``infinite fern'' argument that they originally introduced in \cite{Maz2}. However, since Nakamura assumes throughout the paper that $\overline\rho$ has only scalar endomorphisms, we use the trianguline variety as studied in \cite{BHS} in lieu of Nakamura's finite slope subspace, which is defined in terms of framed deformation functors and is thus better suited to our purposes.

We end the introduction with a few remarks.

\begin{remark}\label{caveats}
$\mbox{}$
\begin{enumerate}
	\item B\"ockle and Juschka use a versal hull $R^{\mathrm{ver}}_{\overline\rho}$ of the associated unframed deformation problem instead of $R^\square_{\overline\rho}$, but there is always a formally smooth map $D^\square_{\overline\rho} \to h_{R^{\mathrm{ver}}_{\overline\rho}}$, so for our purposes it suffices to study either ring. Actually, since $\overline\rho$ is trivial, $R^\square_{\overline\rho} = R^{\mathrm{ver}}_{\overline\rho}$: this is Lemma \ref{framedversal}.
	\item Deformations of the trivial $\overline\rho$ factor through the maximal pro-$p$-quotient of $G_K$, which has a nice presentation that we can work with. A general representation $\overline\rho: G_K \to \mathrm{GL}_n(k_L)$ trivializes over some finite extension $K'/K$ because $\mathrm{GL}_n(k_L)$ is discrete, and this induces a map $D^\square_{\overline\rho} \to D^\square_{\overline\rho|_{K'}}$, but we are not sure how much can be said about this map for general $\overline\rho$: this paper only studies the target of the map.
	\item For convenience, we work mostly with the scheme-theoretic generic fiber $X^\square_{\overline\rho} := \Spec R^\square_{\overline\rho}[1/\varpi_L]$. To see that this is enough, note first of all $\mathcal{X}_{\overline\rho}^\square$ is a normal rigid space if and only if $X_{\overline\rho}^\square$ is a normal scheme by \cite[Lemma 7.1.9]{DeJ}. Secondly, the irreducible components of $\mathcal{X}^\square_{\overline\rho}$ are in canonical bijection with the irreducible components of $\Spf R^\square_{\overline\rho}$ by \cite[Theorem 2.3.1]{Con}, i.e. the images of the connected components of a normalization of $\Spf R^\square_{\overline\rho}$. But these are in canonical bijection with the irreducible components of $X^\square_{\overline\rho}$ as explained in Remark \ref{genericf} below.
	\item The assumption that $p > n$ is a hypothesis that we hope to be able to remove; for now, we do not see an easy way to deal with the extra technicalities that arise. We also avoid the case when $p = 2$ and $|\mu_{p^\infty}(K)| = 2$: see the discussion after Corollary \ref{structure} for an explanation.
\end{enumerate}
\end{remark}

\subsection{Acknowledgements}

I would like to thank James Newton for suggesting the problem, and for constant support and guidance. I would also like to thank Dougal Davis and Andrew Graham for helpful conversations, and Toby Gee, Pol van Hoften, and Carl Wang-Erickson for comments on earlier drafts. Finally, I would like to thank the anonymous referee for various useful comments on the first submission.

This work was supported by the Engineering and Physical Sciences Research Council [EP / L015234 / 1], The EPSRC Centre for Doctoral Training in Geometry and Number Theory (The London School of Geometry and Number Theory), King's College London.

\section{Galois Deformation Rings}\label{GaloisDeformationRings}

We now restate the setup. Fix a prime $p$ and a finite extension $K/\mathbf{Q}_p$ of degree $d = [K:\mathbf{Q}_p]$. Fix another finite extension $L/\mathbf{Q}_p$ with ring of integers $\mathscr{O}_L$ and maximal ideal $\mathfrak{m}_L$, and let $k_L = \mathscr{O}_L/\mathfrak{m}_L$ denote the residue field. We always assume $|\mu_{p^\infty}(L)| \geq |\mu_{p^\infty}(K)|$. Fix an integer $n > 1$. Let $\epsilon: G_K \to L^\times$ denote the $p$-adic cyclotomic character.

We are interested in deforming a continuous representation $\overline\rho: G_K \to \mathrm{GL}_n(k_L)$. Let $\mathsf{Art}_{\mathscr{O}_L}$ denote the category whose objects are local Artinian $\mathscr{O}_L$-algebras $A$ together with a surjective reduction map $A \twoheadrightarrow k_L$, and whose morphisms are local $\mathscr{O}_L$-algebra homomorphisms $A \to B$ respecting the reduction maps to $k_L$. Then the framed deformation problem for $\overline\rho$ is the functor
\begin{align*}
	D_{\overline\rho}^\square: \mathsf{Art}_{\mathscr{O}_L} &\to \mathsf{Set} \\
	A &\mapsto \{\text{lifts of } \overline\rho \text{ to continuous representations } G_K \to \mathrm{GL}_n(A)\}
\end{align*}
and the unframed deformation problem is the functor
\begin{align*}
	D_{\overline\rho}: \mathsf{Art}_{\mathscr{O}_L} &\to \mathsf{Set} \\
	A &\mapsto D^\square_{\overline\rho}(A)/(1+\mathrm{Mat}_n(\mathfrak{m}_A))\text{-conjugacy}.
\end{align*}
Note $D_{\overline\rho}$ may not be pro-representable, but it always has a versal hull, i.e. a complete local Noetherian ring $R^{\mathrm{ver}}_{\overline\rho}$ and a formally smooth map $h_{R^{\mathrm{ver}}_{\overline\rho}} \to D_{\overline\rho}$ of deformation functors that induces an isomorphism on tangent spaces.

For any functor $D: \mathsf{Art}_{\mathscr{O}_L} \to \mathsf{Set}$, let $T(D) = D(k_L[x]/x^2)$ denote its tangent space.

\begin{lemma}\label{framedversal}
The forgetful map $D^\square_{\overline\rho} \to D_{\overline\rho}$ factors through a (non-unique) formally smooth morphism $D^\square_{\overline\rho} \to h_{R^{\mathrm{ver}}_{\overline\rho}}$, which is an isomorphism if $\overline\rho$ is the trivial representation.
\end{lemma}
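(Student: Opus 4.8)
The plan is to construct the formally smooth map $D^\square_{\overline\rho} \to h_{R^{\mathrm{ver}}_{\overline\rho}}$ from the general theory of deformations versus framed deformations, and then to analyze the fibers explicitly when $\overline\rho$ is trivial. First I would recall that $D^\square_{\overline\rho}$ is always pro-representable (by $R^\square_{\overline\rho}$) and that there is a natural forgetful transformation $D^\square_{\overline\rho} \to D_{\overline\rho}$ given by quotienting by $\widehat{\mathrm{GL}}_n$-conjugacy, where $\widehat{\mathrm{GL}}_n$ denotes the functor $A \mapsto 1 + \mathrm{Mat}_n(\mathfrak{m}_A)$. The key structural fact is that this map realizes $D^\square_{\overline\rho}$ as a torsor-like object over $D_{\overline\rho}$: more precisely, for the versal hull one uses that $\widehat{\mathrm{GL}}_n$ is formally smooth and that the formally smooth map $h_{R^{\mathrm{ver}}_{\overline\rho}} \to D_{\overline\rho}$ can be composed appropriately. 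I would invoke the standard argument (as in Kisin, or Gouvêa's notes, or B\"ockle's lectures): choosing a versal deformation over $R^{\mathrm{ver}}_{\overline\rho}$ and lifting framings, one produces a morphism $D^\square_{\overline\rho} \to h_{R^{\mathrm{ver}}_{\overline\rho}}$ whose non-uniqueness comes precisely from the choice of versal object and of lift; formal smoothness follows because at the level of tangent-and-obstruction the map has no obstruction and its relative tangent space is $\mathrm{Mat}_n(k_L)/H^0(G_K,\ad\overline\rho)$, which is a finite-dimensional $k_L$-vector space, so by the infinitesimal lifting criterion the map is formally smooth.

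Next I would pin down why the map is an isomorphism when $\overline\rho$ is trivial. Since the relative tangent space above is $\mathrm{Mat}_n(k_L)/H^0(G_K,\ad\overline\rho)$, it suffices to show $H^0(G_K,\ad\overline\rho) = \mathrm{Mat}_n(k_L)$ in this case — which is immediate, because if $\overline\rho$ is trivial then $G_K$ acts trivially on $\ad\overline\rho = \mathrm{Mat}_n(k_L)$, so $H^0$ is everything. Thus the relative tangent space vanishes. Combined with formal smoothness, a formally smooth morphism of deformation functors (equivalently, of the representing/pro-representing complete local Noetherian rings) which is an isomorphism on tangent spaces is an isomorphism. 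Concretely, on rings this says the induced map $R^{\mathrm{ver}}_{\overline\rho} \to R^\square_{\overline\rho}$ is formally smooth with trivial relative cotangent space, hence an isomorphism; equivalently the conjugation action of $\widehat{\mathrm{GL}}_n$ on $D^\square_{\overline\rho}$ is trivial because every lift of the trivial representation is fixed by every element of $1 + \mathrm{Mat}_n(\mathfrak{m}_A)$ up to... no — rather, the point is that the stabilizers are everything at the level of the identity component of the centralizer, so the quotient map is already an isomorphism of functors.

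The main obstacle, and the only place requiring genuine care, is the construction of the map $D^\square_{\overline\rho} \to h_{R^{\mathrm{ver}}_{\overline\rho}}$ itself and the verification of its formal smoothness in families (not just on tangent spaces): one must check the lifting property against an arbitrary small surjection $A \twoheadrightarrow A'$ in $\mathsf{Art}_{\mathscr{O}_L}$, which amounts to showing that given a framed lift to $A'$, a versal-hull-point over $A$ compatible with it, and the compatibility data, one can choose an actual framed lift to $A$ — this uses smoothness of $\widehat{\mathrm{GL}}_n$ to adjust the framing and vanishing of the relevant $H^2$-type obstruction, which here is subsumed because we are lifting along the formally smooth $h_{R^{\mathrm{ver}}} \to D_{\overline\rho}$ rather than solving a genuine Galois-cohomological obstruction problem. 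I would present this cleanly by citing the standard reference for the framed-versus-unframed comparison (e.g. the relevant statement in \cite{Kis10} or a deformation-theory reference) and then doing the trivial-$\overline\rho$ specialization in one line via the $H^0$ computation. The rest is routine.
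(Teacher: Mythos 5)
Your proposal is correct and follows essentially the same route as the paper: the paper likewise obtains the formally smooth map $D^\square_{\overline\rho} \to h_{R^{\mathrm{ver}}_{\overline\rho}}$ from the general versality formalism (citing the Stacks Project) and then observes that the relative tangent space of $D^\square_{\overline\rho} \to D_{\overline\rho}$ is $B^1(G_K,\ad\overline\rho) \cong \mathrm{Mat}_n(k_L)/H^0(G_K,\ad\overline\rho)$, which vanishes when $\overline\rho$ is trivial, so the formally smooth map induces an isomorphism on tangent spaces and is therefore an isomorphism. One small correction: your closing aside that ``the quotient map is already an isomorphism of functors'' is false for $n \geq 2$ (distinct framed lifts of the trivial representation can be conjugate under $1+\mathrm{Mat}_n(\mathfrak{m}_A)$, so $D^\square_{\overline\rho}(A) \to D_{\overline\rho}(A)$ is not injective); the correct conclusion, which your main argument does deliver, is that $D^\square_{\overline\rho} \to D_{\overline\rho}$ is itself a versal hull, i.e.\ $D^\square_{\overline\rho} \cong h_{R^{\mathrm{ver}}_{\overline\rho}}$ and $R^{\mathrm{ver}}_{\overline\rho} \cong R^\square_{\overline\rho}$.
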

\begin{proof}
Note $D^\square_{\overline\rho} \to D_{\overline\rho}$ is formally smooth, and is therefore a versal deformation. Then \cite[Tag 06T5(3)]{stacks-project} gives us the formally smooth map $D^\square_{\overline\rho} \to h_{R^{\mathrm{ver}}_{\overline\rho}}$. Let $T(D) = D(k_L[x]/x^2)$ denote the tangent space to a deformation functor $D: \mathsf{Art}_{\mathscr{O}_L} \to \mathsf{Set}$. Then a standard computation shows that the induced map of tangent spaces $T(D^\square_{\overline\rho}) \to T(D_{\overline\rho})$ is just the quotient map $Z^1(G_K,\ad\overline\rho) \to H^1(G_K,\ad\overline\rho)$ (where $Z^1$ denotes continuous cocycles and $H^1$ is continuous group cohomology). If $\overline\rho$ is trivial, then the coboundaries $B^1(G_K,\ad\overline\rho) = 0$, so $D^\square_{\overline\rho} \to D_{\overline\rho}$ induces an isomorphism on tangent spaces, and is thus a versal hull.
\end{proof}

\subsection{Presentation of \texorpdfstring{$R^\square_{\overline\rho}$}{R}}

For the rest of this article, take $\overline\rho$ to be the trivial representation.

Since lifts $\rho \in D^\square_{\overline\rho}(A)$ reduce to $\overline\rho$, they must factor through $1 + \mathrm{Mat}_n(\mathfrak{m}_A) \hookrightarrow \mathrm{GL}_n(A)$, where $\mathrm{Mat}_n(\mathfrak{m}_A)$ is the set of $n \times n$ matrices with entries in the maximal ideal $\mathfrak{m}_A \subset A$.

\begin{lemma}\label{pro-p}
For any $A$ in $\mathsf{Art}_{\mathscr{O}_L}$, the group $1 + \mathrm{Mat}_n(\mathfrak{m}_A)$ is a $p$-group.
\end{lemma}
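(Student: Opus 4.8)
The plan is to show that every element of $1 + \mathrm{Mat}_n(\mathfrak{m}_A)$ has order a power of $p$; since $A$ is Artinian the group is finite, so it is then a $p$-group. First I would reduce to a statement about nilpotence: because $A$ is local Artinian, $\mathfrak{m}_A$ is nilpotent, say $\mathfrak{m}_A^N = 0$, and hence $\mathrm{Mat}_n(\mathfrak{m}_A)$ is a nilpotent (non-unital) ring: any product of $N$ elements of $\mathrm{Mat}_n(\mathfrak{m}_A)$ lies in $\mathrm{Mat}_n(\mathfrak{m}_A^N) = 0$. So for $X \in \mathrm{Mat}_n(\mathfrak{m}_A)$ we have $X^N = 0$.

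Next, fix $g = 1 + X$ with $X^N = 0$. The key computation is that for $p^m \geq N$ one has $g^{p^m} = (1+X)^{p^m} = 1$. To see this, expand by the binomial theorem (valid since $1$ and $X$ commute): $g^{p^m} = \sum_{j=0}^{p^m} \binom{p^m}{j} X^j$, and all terms with $j \geq N$ vanish because $X^j = 0$. For the remaining terms with $1 \leq j < N \leq p^m$, I would use that $A$ is an $\mathscr{O}_L$-algebra on which $p$ is topologically nilpotent — more concretely, since $A$ is a local Artinian $\mathscr{O}_L$-algebra with residue field $k_L$ of characteristic $p$, the image of $p$ in $A$ lies in $\mathfrak{m}_A$, so $p^r = 0$ in $A$ for $r$ large. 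The standard $p$-adic valuation estimate gives $v_p\!\left(\binom{p^m}{j}\right) = m - v_p(j) \geq m - \log_p(j)$, which tends to $\infty$ with $m$; so for $m$ large enough (depending only on $N$ and on the nilpotence order of $p$ in $A$) every coefficient $\binom{p^m}{j}$ with $1 \leq j < N$ is divisible by a high enough power of $p$ to vanish in $A$. Hence $g^{p^m} = 1$.

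Finally, since such an $m$ can be chosen uniformly (it depends only on $A$, via $N$ and the nilpotence order of $p$, not on $g$), every element of $1 + \mathrm{Mat}_n(\mathfrak{m}_A)$ has order dividing $p^m$, so the group has exponent a power of $p$; being finite, it is a $p$-group.

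The only mildly delicate point — the "main obstacle" such as it is — is making sure the two nilpotence parameters ($\mathfrak{m}_A^N = 0$ controlling $X^N = 0$, and the power of $p$ that vanishes in $A$) interact correctly in the binomial coefficient estimate; but both are finite because $A$ is Artinian, so choosing $m$ with $p^m \geq N$ and $p^{\,m - \lfloor \log_p(N-1)\rfloor}$ annihilating $A$ suffices. Everything else is a routine binomial expansion.
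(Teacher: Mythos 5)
Your proof is correct, but it takes a genuinely different route from the paper. The paper's proof is a pure cardinality count: $1+\mathrm{Mat}_n(\mathfrak{m}_A)$ is in bijection with $\mathrm{Mat}_n(\mathfrak{m}_A)$, and $|\mathfrak{m}_A|$ is a power of $p$ because the successive quotients in the filtration $\mathfrak{m}_A \supset \mathfrak{m}_A^2 \supset \cdots \supset \mathfrak{m}_A^k = 0$ are finite-dimensional vector spaces over the finite field $k_L$; a finite group of $p$-power order is a $p$-group, full stop. You instead prove the element-wise statement that every $1+X$ has $p$-power order, using nilpotence of $X$ and the valuation estimate $v_p\bigl(\binom{p^m}{j}\bigr) = m - v_p(j)$, which is a fine (and standard) argument for unipotent-type groups; it has the advantage of working even when the residue field is infinite of characteristic $p$, where the counting argument would say nothing. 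Two small remarks: first, your assertion that the group is finite ``since $A$ is Artinian'' is not a consequence of Artinian-ness alone (e.g.\ $\mathbf{Q}[x]/(x^2)$ is Artinian and infinite) — it also uses that $k_L$ is finite, which is exactly the input the paper's counting argument runs on; second, having shown the exponent is $p^m$, you do still need finiteness (plus Cauchy's theorem) to conclude ``$p$-group'' in the finite-group sense that the paper uses in the next step, so this hypothesis is not optional for you either. The paper's route is shorter; yours isolates more precisely why unipotence forces $p$-power torsion.
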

\begin{proof}
It suffices to count the number of elements in $\mathrm{Mat}_n(\mathfrak{m}_A)$, for which it suffices to count the number of elements in $\mathfrak{m}_A$. In the maximal ideal filtration $\mathfrak{m}_A \supset \mathfrak{m}_A^2 \supset \cdots \supset \mathfrak{m}_A^k = 0$, the successive quotients are finite $k_L$-vector spaces. The lemma follows.
\end{proof}

So since any deformation $\rho: G_K \to 1 + \mathrm{Mat}_n(\mathfrak{m}_A)$ is a map from $G_K$ into a $p$-group, it must factor through $G_K \twoheadrightarrow G_K^p$, where $G_K^p$ denotes the maximal pro-$p$-quotient of $G_K$. Let $q = |\mu_{p^\infty}(K)|$. We note the following results of Shafarevich and Demu\v{s}kin:

\begin{theorem}\label{groupstructure}
$\mbox{}$
\begin{enumerate}
	\item (Shafarevich \cite{Sha}) If $q = 1$, then $G_K^p$ is a free pro-$p$-group of rank $d+1$.
	\item (Demu\v{s}kin \cite{Dem}) If $q \geq 3$ then $G_K^p$ is isomorphic to the quotient of the free pro-$p$-group on $d+2$ generators $g_1,\dots,g_{d+2}$ by the relation
	\[ g_1^q[g_1,g_2][g_3,g_4] \cdots [g_{d+1},g_{d+2}] = 1, \]
where $[g,h] = ghg^{-1}h^{-1}$.
\end{enumerate}
\end{theorem}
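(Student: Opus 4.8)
Since this is a classical result, the plan is to recall the Shafarevich/Demu\v{s}kin strategy. The governing principle is that, for any pro-$p$ group $G$, the minimal number of topological generators equals $\dim_{\mathbf{F}_p}H^1(G,\mathbf{F}_p)$ and the minimal number of relations equals $\dim_{\mathbf{F}_p}H^2(G,\mathbf{F}_p)$. So I would first compute these cohomology groups for $G:=G_K^p$. As $G$ is the maximal pro-$p$ quotient of $G_K$, inflation is an isomorphism on $H^1(-,\mathbf{F}_p)$ and injective on $H^2(-,\mathbf{F}_p)$, so it suffices to compute $H^i(G_K,\mathbf{F}_p)$ for $i=1,2$: local Tate duality gives $H^2(G_K,\mathbf{F}_p)\cong H^0(G_K,\mu_p)^\vee$, which is $0$ if $q=1$ and one-dimensional if $q\neq1$, and the local Euler characteristic formula gives $\dim_{\mathbf{F}_p}H^1(G_K,\mathbf{F}_p)=1+d+\dim_{\mathbf{F}_p}H^2(G_K,\mathbf{F}_p)$. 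Hence $G$ needs $d+1$ generators and at most $0$ relations when $q=1$, and $d+2$ generators and at most one relation when $q\neq1$.

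Next I would convert this into an actual presentation. Choose a minimal presentation $1\to R\to F\to G\to1$ with $F$ free pro-$p$ on $\dim_{\mathbf{F}_p}H^1(G,\mathbf{F}_p)$ generators and $R\subseteq\Phi(F)$, and recall the standard identification of $R/\overline{R^p[F,R]}$ with $H^2(G,\mathbf{F}_p)^\vee$. When $q=1$ this vanishes, so the topological Nakayama lemma forces $R=1$; thus $G=F$ is free pro-$p$ of rank $d+1$, which is part (1). When $q\neq1$ it is one-dimensional (nonzero because the non-degenerate cup product on $H^1(G_K,\mathbf{F}_p)$ below inflates from $G$, forcing $H^2(G,\mathbf{F}_p)\neq0$), so $R$ is the normal closure of a single relator $r\in\Phi(F)$ and $G$ is a one-relator pro-$p$ group; it remains to pin down $r$ up to a change of the free generators $\chi_1,\dots,\chi_{d+2}$.

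To identify $r$ I would work in the lower $p$-central series of $F$. Modulo the third term $F_3=\overline{\Phi(F)^p[F,\Phi(F)]}$, the class of $r$ in $\Phi(F)/F_3$ has, in the basis $\{\chi_i^p\}_i\cup\{[\chi_i,\chi_j]\}_{i<j}$, commutator coordinates equal to the cup products $x_i\cup x_j\in H^2(G,\mathbf{F}_p)\cong\mathbf{F}_p$ of the dual basis $\{x_i\}$ of $H^1(G,\mathbf{F}_p)$. By Kummer theory $H^1(G_K,\mathbf{F}_p)\cong K^\times/(K^\times)^p$, and under this identification the cup product is the mod-$p$ Hilbert symbol, which is non-degenerate by local duality and, for $p$ odd, alternating by graded-commutativity; choosing a symplectic basis then makes the commutator part of $r$ equal to $[\chi_1,\chi_2][\chi_3,\chi_4]\cdots[\chi_{d+1},\chi_{d+2}]$ — this is consistent because the hypothesis $q\geq3$ forces $d=[K:\mathbf{Q}_p]$ to be even (it is divisible by $p-1$ when $p$ is odd and by $[\mathbf{Q}_2(i):\mathbf{Q}_2]=2$ when $p=2$), so the $d+2$ generators pair up. The leading term $\chi_1^q$ then has to be extracted from deeper in the lower $p$-central series: it is detected by the Bockstein when $q=p$, but for $q=p^f$ with $f\geq2$ this requires Labute's finer analysis. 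In all cases it reflects the fact that the dualizing character of the Poincar\'e-duality pro-$p$ group $G$ is the cyclotomic character, whose conductor is exactly $q=|\mu_{p^\infty}(K)|$. Lifting the $\chi_i$ to $g_i\in G$ then yields the presentation in the statement.

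The main obstacle is precisely this last step. Extracting the exact relator $r$ — in particular, that its leading exponent is $q$ rather than some other power of $p$ — requires a careful study of the lower $p$-central series of $F$ together with the Bockstein tower and the Hilbert symbol. For $p=2$ the identity $x\cup x=\beta x$ couples the square terms with the commutator terms, and the case $q=2$ (that is, $K\not\supseteq\mu_4$) genuinely falls outside the clean normal form above, which is why the hypotheses of this paper exclude it.
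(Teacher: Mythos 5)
The paper offers no proof of this theorem, simply citing Shafarevich and Demu\v{s}kin (with Serre \cite{Ser}, Labute \cite{Lab} and the discussion around \cite[Theorem 7.5.14]{Neu} for the normal form of the relator), and your outline is a correct reconstruction of precisely that classical argument: the generator and relation count via inflation, local Tate duality and the Euler characteristic formula is complete and accurate, and you rightly defer the genuinely hard step --- pinning down the relator exactly --- to the classification of Demu\v{s}kin groups. The one simplification worth noting is that the invariant $q$ is most easily identified not through the dualizing character but through local class field theory, which gives $(G_K^p)^{\mathrm{ab}} \cong \mathbf{Z}/q \times \mathbf{Z}_p^{d+1}$ as the pro-$p$ completion of $K^\times$, visibly matching the abelianization of the claimed presentation.
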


When $q = 2$, the group $G_K^{(2)}$ is again cut out by one relation, which looks a lot like the one in part 2 of Theorem \ref{groupstructure}. This is due to Serre in \cite{Ser} when $d$ is odd and Labute in \cite{Lab} when $d$ is even. We suspect that one could prove that $X_{\overline\rho}^\square$ is normal in these exceptional cases using the same method, but since our crystalline density result assumes $p > n$, we have decided not to pursue this.

\begin{remark}
In \cite{Dem}, Demu\v{s}kin uses the convention that $[g,h] = g^{-1}h^{-1}gh$ rather than $[g,h] = ghg^{-1}h^{-1}$. However, following Serre's proof in \cite[Section 6]{Ser}, we may use either convention: the basis for $G_K^p$ that we get depends on which convention we use to define the lower exponent-$p$ central series of $G_K^p$ in the proof of part 2 of Theorem \ref{groupstructure}. See also the discussion preceding \cite[Theorem 7.5.14]{Neu}.
\end{remark}

We can use these presentations to determine the representing ring for $D^\square_{\overline\rho}$.

\begin{proposition}\label{representable}
Suppose $G = \left\langle g_1,\dots,g_s : r(g_1,\dots,g_s)=1 \right\rangle$ is the quotient of the free pro-$p$-group on $s$ generators by the relation $r(g_1,\dots,g_s)=1$. Then the framed deformation functor $D_{\overline\rho}^\square$ of the trivial representation $\rho$ is pro-represented by the complete Noetherian local ring
	\[ R^\square_{\overline\rho} = \mathscr{O}_L\llbracket X_1,\dots,X_s \rrbracket/(r(\widetilde{X}_1,\dots,\widetilde{X}_s)-I), \]
where each $X_i$ is an $n \times n$ matrix of indeterminates, and $\widetilde{X}_i := X_i + I$.
\end{proposition}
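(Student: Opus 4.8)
The plan is to unwind the definition of the framed deformation functor and match it with the universal property of the proposed quotient ring. First I would record the general principle that for a profinite group presented as $G = \langle g_1,\dots,g_s \mid r = 1\rangle$ (the pro-$p$ completion of a one-relator group), a continuous homomorphism $G \to 1 + \mathrm{Mat}_n(\mathfrak{m}_A)$ for $A$ Artinian is the same datum as an $s$-tuple $(u_1,\dots,u_s)$ of elements $u_i \in 1 + \mathrm{Mat}_n(\mathfrak{m}_A)$ subject to the single matrix equation $r(u_1,\dots,u_s) = I$. In one direction this is restriction to the generators; in the other, one uses that the free pro-$p$-group $F$ on $g_1,\dots,g_s$ has the universal property that any assignment $g_i \mapsto u_i$ into a pro-$p$ group extends uniquely to a continuous homomorphism $F \to 1 + \mathrm{Mat}_n(\mathfrak{m}_A)$ (here one uses Lemma \ref{pro-p}, that the target is a finite $p$-group for Artinian $A$, so continuity is automatic and the pro-$p$ universal property applies), and that such a homomorphism factors through $G$ precisely when the relator is sent to the identity.

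Next I would identify the representing object. Writing $u_i = I + M_i$ with $M_i \in \mathrm{Mat}_n(\mathfrak{m}_A)$, the assignment $X_i \mapsto M_i$ (entry-by-entry) gives, for each $A$ in $\mathsf{Art}_{\mathscr{O}_L}$, a bijection between $\mathrm{Hom}_{\mathsf{Art}_{\mathscr{O}_L}}(\mathscr{O}_L\llbracket X_1,\dots,X_s\rrbracket, A)$ and the set of $s$-tuples of matrices with entries in $\mathfrak{m}_A$ — this is just the universal property of a power series ring over $\mathscr{O}_L$, using that any $\mathscr{O}_L$-algebra map to an Artinian ring sends the $X_i$ into the maximal ideal and that $A$ is $\mathfrak{m}_A$-adically complete (indeed $\mathfrak{m}_A$ is nilpotent). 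Under this bijection, imposing the relation $r(\widetilde X_1,\dots,\widetilde X_s) - I = 0$ on the power series ring corresponds exactly to imposing $r(I + M_1,\dots,I+M_n) = I$ on the tuple, since $r(\widetilde X_1,\dots,\widetilde X_s) - I$ is an $n\times n$ matrix of elements of $\mathscr{O}_L\llbracket X_1,\dots,X_s\rrbracket$ lying in the ideal generated by the $X_i$, and a homomorphism kills an ideal iff it kills its generators. Therefore $R^\square_{\overline\rho} = \mathscr{O}_L\llbracket X_1,\dots,X_s\rrbracket/(r(\widetilde X_1,\dots,\widetilde X_s) - I)$ pro-represents $D^\square_{\overline\rho}$, and it is manifestly a complete Noetherian local ring with residue field $k_L$ (a quotient of a power series ring over $\mathscr{O}_L$).

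The main obstacle is a bookkeeping point rather than a deep one: one must check that the word map $r$ evaluated on matrices $\widetilde X_i = X_i + I$ genuinely defines elements of the power series ring and, crucially, that for any $A$ the resulting equation $r(\widetilde X_1,\dots,\widetilde X_s) = I$ in $\mathrm{GL}_n(A)$ is equivalent to the tuple $(u_i)$ defining a continuous homomorphism $G \to \mathrm{GL}_n(A)$ (not merely a map of the underlying free group into abstract matrices). This is where the pro-$p$ hypothesis does real work: $G_K^p$ is the pro-$p$ completion of the discrete one-relator group, so a priori its representations into a pro-$p$ group could differ from representations of the discrete group, but since $1 + \mathrm{Mat}_n(\mathfrak{m}_A)$ is a \emph{finite} $p$-group for Artinian $A$, the two notions coincide and continuity is free. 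I would also remark that although $D^\square_{\overline\rho}$ is a priori defined only on $\mathsf{Art}_{\mathscr{O}_L}$, the usual formalism (Schlessinger/Grothendieck) shows that being pro-represented on the Artinian category by the complete local ring $R^\square_{\overline\rho}$ is exactly the assertion we want, and this passes formally to the category of complete local Noetherian $\mathscr{O}_L$-algebras by taking limits.
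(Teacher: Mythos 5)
Your proposal is correct and follows essentially the same route as the paper: both identify continuous lifts with $s$-tuples in $1+\mathrm{Mat}_n(\mathfrak{m}_A)$ killing the relator (using that the target is a finite $p$-group, so the pro-$p$ universal property of the free group applies and continuity is automatic), and both match this with the universal property of the power series ring via $X_i \leftrightarrow \rho(g_i)-I$. Your write-up is somewhat more explicit about the bookkeeping (well-definedness of $r(\widetilde X_1,\dots,\widetilde X_s)$ in the power series ring, and passage from the Artinian category to complete local rings), but the argument is the same.
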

\begin{proof}
For $A \in \mathsf{Art}_{\mathscr{O}_L}$, a continuous lift $\rho: G \to 1+\mathrm{Mat}_n(\mathfrak{m}_A)$ is determined by where it sends $g_1,\dots,g_{d+2}$. Thus we can define a map
	\[ f_\rho: R^\square_{\overline\rho} \to A, X_i \mapsto \rho(g_i) - I \]
that is continuous because $\rho(g_i) - I \in \mathrm{Mat}_n(\mathfrak{m}_A)$, and well-defined because
	\[ r(\widetilde{X}_1,\dots,\widetilde{X}_s) \mapsto r(\rho(g_1),\dots,\rho(g_s)) = \rho(r(g_1,\dots,g_s)) = I. \]
Conversely, given a continuous map $f: R^\square_{\overline\rho} \to A$, we can define a continuous map $G \to 1+\mathrm{Mat}_n(\mathfrak{m}_A)$ taking $g_i$ to $f(\widetilde{X}_i)$. These are inverse constructions, and give an isomorphism of functors.
\end{proof}

Let $\rho^\square: G_K \to \mathrm{GL}_n(R^\square_{\overline\rho})$ denote the universal representation.

\begin{corollary}\label{structure}
Let $q$ be the largest power of $p$ such that $K$ contains the $q$th roots of unity. Then
	\[ R^\square_{\overline\rho} = \begin{cases} \mathscr{O}_L\llbracket X_1,\dots,X_{d+1} \rrbracket & q=1 \\ \mathscr{O}_L\llbracket X_1,\dots,X_{d+2} \rrbracket/(\widetilde{X}_1^q[\widetilde{X}_1,\widetilde{X}_2]\cdots[\widetilde{X}_{d+1},\widetilde{X}_{d+2}]-I) & q \geq 3 \end{cases} \]
\end{corollary}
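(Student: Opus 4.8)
The plan is to combine the group-theoretic presentations of $G_K^p$ recorded in Theorem~\ref{groupstructure} with the representability statement of Proposition~\ref{representable}, the only preliminary point being to reduce from $G_K$ to its maximal pro-$p$ quotient.

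First I would observe that, since $\overline\rho$ is trivial, every lift $\rho\colon G_K \to 1+\mathrm{Mat}_n(\mathfrak{m}_A)$ lands in a finite $p$-group by Lemma~\ref{pro-p}, hence factors through $G_K \twoheadrightarrow G_K^p$ as explained after that lemma; conversely every lift of the trivial representation of $G_K^p$ pulls back to one of $G_K$. Thus $D^\square_{\overline\rho}$ is canonically identified with the framed deformation functor of the trivial $n$-dimensional representation of $G_K^p$, and it suffices to compute the latter from a presentation of $G_K^p$.

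Next, invoke Theorem~\ref{groupstructure}. When $q = 1$, the group $G_K^p$ is free pro-$p$ of rank $d+1$, so a continuous lift to $1+\mathrm{Mat}_n(\mathfrak{m}_A)$ is determined by, and may be prescribed arbitrarily on, the $d+1$ free topological generators; running the bijection argument of Proposition~\ref{representable} with no relation to impose yields $R^\square_{\overline\rho} = \mathscr{O}_L\llbracket X_1,\dots,X_{d+1}\rrbracket$. When $q \geq 3$, the group $G_K^p$ is the one-relator pro-$p$ group on $d+2$ generators with relator $r(g_1,\dots,g_{d+2}) = g_1^q[g_1,g_2][g_3,g_4]\cdots[g_{d+1},g_{d+2}]$, so Proposition~\ref{representable} applies directly with $s = d+2$ and gives
\[ R^\square_{\overline\rho} = \mathscr{O}_L\llbracket X_1,\dots,X_{d+2}\rrbracket\big/\big(\widetilde{X}_1^q[\widetilde{X}_1,\widetilde{X}_2]\cdots[\widetilde{X}_{d+1},\widetilde{X}_{d+2}] - I\big), \]
with $\widetilde{X}_i = X_i + I$; here the commutators make sense because each $\widetilde{X}_i$ is a unit, its reduction modulo the maximal ideal being $I$.

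There is essentially no obstacle: everything needed is already established. The only points deserving a remark are that Proposition~\ref{representable} is phrased for a genuine one-relator presentation, so the free case $q=1$ is most cleanly handled by repeating the (easier) argument of that proof with the relator deleted, and that the expression $r(\widetilde{X}_1,\dots,\widetilde{X}_s)$ implicitly uses the inverses $\widetilde{X}_i^{-1}$, which exist in $R^\square_{\overline\rho}$ precisely because the $X_i$ lie in the maximal ideal. The deliberate exclusion of $q = 2$ (where $G_K^p$ is still cut out by one relation, by Serre and Labute) is a matter of scope, not of difficulty, as the text indicates.
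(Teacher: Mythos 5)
Your proposal is correct and is exactly the argument the paper intends: the corollary is an immediate combination of the reduction to $G_K^p$ (via Lemma \ref{pro-p}), the Shafarevich--Demu\v{s}kin presentations in Theorem \ref{groupstructure}, and Proposition \ref{representable}, with the $q=1$ case handled by the relation-free version of that proposition. Your remarks about the invertibility of the $\widetilde{X}_i$ and the scope of the $q=2$ exclusion are accurate and match the paper's discussion.
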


We are interested in the irreducible components of $\Spec R^\square_{\overline\rho}[1/\varpi_L]$. If $q=1$, then there is clearly only one such component. Since we have decided not to treat $q = 2$, we assume $q > 2$ in the remainder of this article.

The problem with studying the geometry of $\Spec R^\square_{\overline\rho}$ is that there is a particularly nasty singularity at the unique closed point. A nicer approach is to study the generic fiber, which has lots of closed points admitting a nice moduli description, and whose singularities are far easier to control. There are two (basically equivalent) ways of doing this: one can either study the rigid generic fiber $\mathcal{X}^\square_{\overline\rho}$ in the sense of Berthelot (see e.g. \cite[Section 7]{DeJ}), or one can just study the scheme-theoretic generic fiber
	\[ X^\square_{\overline\rho} := \Spec R^\square_{\overline\rho}[1/\varpi_L]. \]
We will use both perspectives: to study irreducible components, it will suffice to use $X^\square_{\overline\rho}$ (although some of the path-connectedness arguments later on in the paper are best thought of rigid analytically). Later, when showing density of crystalline points, we will use $\mathcal{X}^\square_{\overline\rho}$.

\begin{remark}\label{genericf}
Restricting to the generic fiber still allows us to study irreducible components of $\Spec R^\square_{\overline\rho}$ itself, and thus of $\Spf R^\square_{\overline\rho}$ (taken with respect to the maximal ideal): we will show that $R^\square_{\overline\rho}$ is $\mathscr{O}_L$-flat, which implies that the map $R^\square_{\overline\rho} \to R^\square_{\overline\rho}[1/\varpi_L]$ induces a bijection on irreducible components. To see this, note that there is a bijection between irreducible components of $X^\square_{\overline\rho}$ and irreducible components of $\Spec R^\square_{\overline\rho}$ that have nonempty intersection with $X^\square_{\overline\rho}$. Then note that each irreducible component intersects the generic fiber, since $R^\square_{\overline\rho}$ is $\mathscr{O}_L$-flat. Therefore, it suffices to study the irreducible components of $X^\square_{\overline\rho}$, but in fact we will first show that $X^\square_{\overline\rho}$ is normal, and then just study the connected components.
\end{remark}

\subsection{Dimension of \texorpdfstring{$R^\square_{\overline\rho}$}{R}}

First we note the following fact.

\begin{lemma}\label{pairs}
Let $M(n,q+1)_{\mathscr{O}_L} \subset \mathrm{GL}_{n,\mathscr{O}_L} \times_{\mathscr{O}_L} \mathrm{GL}_{n,\mathscr{O}_L}$ be the closed subspace of pairs of invertible matrices $X,Y$ satisfying $XYX^{-1} = Y^{q+1}$. Then $M(n,q+1)_{\mathscr{O}_L}$ is a local complete intersection, and is Cohen-Macaulay and flat of relative dimension $n^2$ over $\Spec \mathscr{O}_L$.
\end{lemma}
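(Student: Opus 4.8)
The plan is to present $M(n,q+1)_{\mathscr{O}_L}$ as a complete intersection inside a smooth ambient scheme and then deduce every assertion from a dimension count on geometric fibres, in the spirit of Helm's analysis of matrix schemes in \cite{Helm}. Set $G := \mathrm{GL}_{n,\mathscr{O}_L} \times_{\mathscr{O}_L} \mathrm{GL}_{n,\mathscr{O}_L}$; it is an integral, regular scheme, smooth over $\mathscr{O}_L$ of relative dimension $2n^2$, hence of dimension $2n^2+1$. The condition $XYX^{-1} = Y^{q+1}$ is the vanishing of the $n^2$ entries of the single matrix $XY - Y^{q+1}X$, so $M := M(n,q+1)_{\mathscr{O}_L}$ is cut out in $G$ by $n^2$ equations and thus, by Krull's height theorem, every irreducible component of $M$ has dimension $\geq n^2+1$. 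The key claim I would establish is that every geometric fibre of $M \to \Spec \mathscr{O}_L$ has dimension $\leq n^2$. Granting this, each such fibre --- cut out by $n^2$ equations in a smooth scheme of dimension $2n^2$ over a field --- is automatically of pure dimension $n^2$ and a local complete intersection, hence Cohen--Macaulay; in particular $\dim M_{k_L} = n^2$, which is strictly smaller than the dimension of any component of $M$, so no component of $M$ is contained in the special fibre, and $\dim \mathcal{O}_{M,x} = n^2+1 = \dim \mathcal{O}_{G,x} - n^2$ at each closed point $x$ of $M$ (these all lie in the special fibre, being closed points of $G$). Thus the $n^2$ equations form a regular sequence in $\mathcal{O}_{G,x}$, which makes $M$ a local complete intersection over $\mathscr{O}_L$, in particular Cohen--Macaulay with no embedded primes; since $\varpi_L$ then lies in no associated prime of $\mathcal{O}_M$, it is a nonzerodivisor, so $M$ is $\mathscr{O}_L$-flat of relative dimension $n^2$.

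To bound the fibre dimensions I would fix an algebraically closed field $\kappa$ (of characteristic $0$ or $p$) and stratify the $Y$-coordinate of $M_\kappa$ by Jordan type $\tau$ (forgetting the eigenvalue labels), of which there are finitely many; matrices of a fixed type $\tau$ form conjugacy classes of one fixed dimension $\delta_\tau$. The decisive observation is that $q+1$ is prime to $p$ (as $q$ is a power of $p$), so $Y \mapsto Y^{q+1}$ preserves Jordan block sizes and merely raises eigenvalues to the $(q+1)$st power. Hence if $(X,Y) \in M_\kappa$ then $Y$ is conjugate to $Y^{q+1}$, so $\lambda \mapsto \lambda^{q+1}$ permutes the (at most $n$) distinct eigenvalues of $Y$, forcing every eigenvalue of $Y$ to be a root of unity of order dividing the fixed positive integer $(q+1)^{n!}-1$; therefore the locus of $Y$ of type $\tau$ with $Y$ conjugate to $Y^{q+1}$ is a \emph{finite} union of conjugacy classes, of dimension $\leq \delta_\tau$. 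Over any such $Y$, the set $\{X \in \mathrm{GL}_n(\kappa) : XYX^{-1} = Y^{q+1}\}$ is, when nonempty, a coset of the centraliser $Z_{\mathrm{GL}_n}(Y)$, of dimension $n^2 - \delta_\tau$. Summing gives $\delta_\tau + (n^2 - \delta_\tau) = n^2$ over each type, and the union over the finitely many types yields $\dim M_\kappa \leq n^2$; in fact this is an equality, witnessed by $(I,I)$, so $M_\kappa$ is of pure dimension $n^2$.

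The hard part will be this geometric-fibre estimate, especially over $\overline{\mathbf{F}_p}$: one must check that the stratification bound is not spoiled by the interaction of $Y \mapsto Y^{q+1}$ with unipotent parts, or by the way eigenvalues degenerate in residue characteristic $p$ --- this is precisely where the coprimality of $q+1$ and $p$ is used, and where the bookkeeping with $q$ a genuine power of $p$ needs care. By contrast, once the fibres are controlled the complete intersection property, Cohen--Macaulayness, flatness, and the value of the relative dimension are formal, as sketched above; and I would expect the lemma to be in any case a special case of the general results of \cite{Helm} on moduli of matrices satisfying relations of this ``tame'' type, with $XYX^{-1} = Y^{q+1}$ in place of the usual $XYX^{-1} = Y^q$.
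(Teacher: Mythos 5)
Your argument is correct and is essentially the proof that the paper invokes only by citation: for Lemma \ref{pairs} the paper simply points to the first two paragraphs of Shotton's Theorem 2.5 (who attributes the argument to Helm, who attributes it to Choi), and that argument is precisely your combination of Krull's height theorem in the smooth ambient $\mathrm{GL}_n \times \mathrm{GL}_n$ with the fibrewise bound $\dim M_\kappa \leq n^2$ obtained by observing that $q+1$ is prime to $p$, that the eigenvalues of $Y$ are therefore roots of unity of bounded order so only finitely many conjugacy classes of $Y$ occur, and that the $X$-fibre over each such $Y$ is a coset of its centraliser. One inessential slip: closed points of $M$ need not lie in the special fibre (since $G$ is affine of finite type over the DVR $\mathscr{O}_L$, any $L$-point of the generic fibre is already closed in $G$), but at such a point the same count gives $\dim\mathcal{O}_{M,x} = n^2 = \dim\mathcal{O}_{G,x} - n^2$, so the regular-sequence and local complete intersection conclusions are unaffected.
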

\begin{proof}
The proof is given in the first two paragraphs of \cite[Theorem 2.5]{Sho}, although Shotton attributes the proof to Helm in \cite[Proposition 4.2]{Helm}, who in turn attributes the argument to Choi in \cite{Choi}.
\end{proof}

\begin{proposition}\label{dimension}
The ring $R^\square_{\overline\rho}$ is a complete intersection ring of dimension $1 + n^2(d+1)$, and $\mathscr{O}_L$-flat. In particular $R^\square_{\overline\rho}$ is Cohen-Macaulay.
\end{proposition}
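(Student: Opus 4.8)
The plan is to read the presentation from Corollary \ref{structure} and recognize the Demu\v{s}kin relation as a fiber product of copies of the space studied in Lemma \ref{pairs}, so that complete intersection, flatness and dimension all follow from standard facts about how these properties behave under (flat) base change and completion. When $q = 1$ there is nothing to do: $R^\square_{\overline\rho}$ is a power series ring over $\mathscr{O}_L$ in $n^2(d+1)$ variables, hence regular (so trivially a complete intersection), $\mathscr{O}_L$-flat, and of dimension $1 + n^2(d+1)$. So I focus on the case $q \geq 3$.

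First I would set up the comparison with Lemma \ref{pairs}. Consider the affine $\mathscr{O}_L$-scheme $Z$ of tuples of matrices $(\widetilde X_1,\dots,\widetilde X_{d+2}) \in \mathrm{GL}_n^{d+2}$ satisfying the single relation $\widetilde X_1^q[\widetilde X_1,\widetilde X_2]\cdots[\widetilde X_{d+1},\widetilde X_{d+2}] = I$; the completion of $\mathscr{O}_L[Z]$ at the point where all $\widetilde X_i = I$, after translating $X_i = \widetilde X_i - I$, is exactly $R^\square_{\overline\rho}$ (one checks the relation lies in the maximal ideal, so passing to the completion makes sense and the power series presentation of Corollary \ref{structure} matches). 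Since completion at a maximal ideal preserves dimension, the complete intersection property, flatness over $\mathscr{O}_L$, and Cohen-Macaulayness, it suffices to establish these for $\mathscr{O}_L[Z]$ localized at that point — or even globally, if convenient.

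Next, the key structural observation: the map $\mathrm{GL}_n^{d+2} \to \mathrm{GL}_n$ sending a tuple to $\widetilde X_1^q[\widetilde X_1,\widetilde X_2]\cdots[\widetilde X_{d+1},\widetilde X_{d+2}]$ can be rewritten, by the substitution used in the standard treatment of Demu\v{s}kin relations, so that the fiber over $I$ is built out of one copy of $M(n,q+1)_{\mathscr{O}_L}$ (handling the $\widetilde X_1^q[\widetilde X_1,\widetilde X_2]$ part: conjugating and substituting $Y = \widetilde X_1$, $X$ related to $\widetilde X_2$ turns $\widetilde X_1^q [\widetilde X_1,\widetilde X_2] = \widetilde X_1^{q} \widetilde X_1 \widetilde X_2 \widetilde X_1^{-1}\widetilde X_2^{-1}$ into the condition $X Y X^{-1} = Y^{q+1}$) together with $\tfrac{d}{2}-?$ — more precisely, the remaining commutators $[\widetilde X_3,\widetilde X_4],\dots,[\widetilde X_{d+1},\widetilde X_{d+2}]$ are each, after a change of variables, a free choice: the equation $\prod [\widetilde X_{2i-1},\widetilde X_{2i}] = (\text{something invertible})$ can be solved for, say, $\widetilde X_{d+2}$ in terms of the others, contributing only free affine-space factors times $\mathrm{GL}_n$'s. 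The upshot is a description of $Z$ (Zariski-locally near the identity tuple, which is all we need) as
\[
    Z \;\cong\; M(n,q+1)_{\mathscr{O}_L} \times_{\mathscr{O}_L} \mathbf{A}^{N}_{\mathscr{O}_L}
\]
for an explicit $N$, where the $\mathbf{A}^N$ records the $2\cdot\frac{d-?}{?}$ copies of $\mathrm{GL}_n$ (formally: open in affine space) and the one dependent matrix. Counting: the ambient space $\mathrm{GL}_n^{d+2}$ has relative dimension $n^2(d+2)$, one matrix relation cuts $n^2$, giving relative dimension $n^2(d+1)$, matching the claim after adding $1$ for $\Spec \mathscr{O}_L$; and locally $Z$ is $M(n,q+1)_{\mathscr{O}_L}$ (relative dimension $n^2$ by Lemma \ref{pairs}) times affine space of the complementary relative dimension $n^2 d$, consistent with $n^2 + n^2 d = n^2(d+1)$. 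Since Lemma \ref{pairs} gives that $M(n,q+1)_{\mathscr{O}_L}$ is a local complete intersection, Cohen-Macaulay, and $\mathscr{O}_L$-flat, and all three properties are preserved by smooth base change (multiplying by $\mathbf{A}^N$) and by localization/completion, the proposition follows; Cohen-Macaulay is automatic once we know "complete intersection", but stating it separately is harmless.

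The main obstacle is making the change of variables in the previous paragraph precise enough to literally exhibit the local isomorphism with $M(n,q+1)_{\mathscr{O}_L} \times \mathbf{A}^N$, rather than just matching dimensions heuristically. One has to be careful that solving the commutator product equation for the last variable is genuinely a smooth (or even an isomorphism onto an open subscheme) operation near the identity — the map $\mathrm{GL}_n \times \mathrm{GL}_n \to \mathrm{GL}_n$, $(A,B) \mapsto [A,B]\cdot C_0$ is a submersion near $(I,I)$ for fixed invertible $C_0$ close to $I$, which should do it, but this smoothness claim is exactly the kind of thing one must verify over $\mathscr{O}_L$ (not just over the generic fiber) to get flatness. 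An alternative that sidesteps the explicit factorization is to instead note that the relation word, viewed as a map $\mathrm{GL}_n^{d+2}\to \mathrm{GL}_n$, is \emph{flat} near the identity tuple — equivalently, cuts out a complete intersection of the expected codimension $n^2$ in the Cohen-Macaulay (indeed smooth) ambient space — by checking that its derivative at the identity is surjective onto $\ad\overline\rho$ after the appropriate identification; combined with Lemma \ref{pairs} feeding in the needed control at the "bad" $\widetilde X_1^q[\widetilde X_1,\widetilde X_2]$ block, this yields complete intersection and hence Cohen-Macaulay directly, and flatness over $\mathscr{O}_L$ then follows because the special fiber has the same dimension as the expected relative dimension.
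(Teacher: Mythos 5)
Your overall strategy (reduce to Lemma \ref{pairs} via the rewriting of $\widetilde X_1^q[\widetilde X_1,\widetilde X_2]=I$ as $\widetilde X_2\widetilde X_1\widetilde X_2^{-1}=\widetilde X_1^{q+1}$, then count dimensions) is the right one, but the mechanism you propose for eliminating $X_3,\dots,X_{d+2}$ has a genuine gap. Both of your routes rest on a first-order nondegeneracy that fails precisely at the point you care about: the derivative of the commutator map $(A,B)\mapsto [A,B]$ at $(I,I)$ is \emph{zero} (writing $A=I+a\epsilon$, $B=I+b\epsilon$ gives $[A,B]=I+O(\epsilon^2)$), so $(A,B)\mapsto [A,B]\cdot C_0$ is not a submersion near the identity tuple and you cannot "solve for $\widetilde X_{d+2}$" there. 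Concretely, the fiber of the commutator map over $I$ is the commuting variety, of dimension $n^2+n>n^2$, so the fibers jump and no local product decomposition $Z\cong M(n,q+1)\times\mathbf{A}^{n^2d}$ compatible with this projection can exist. Your fallback is equally stuck: the linear part of the whole relation word is $q\,x_1$ (the commutators contribute nothing to first order), and $q$ is a power of $p$, hence lies in $\mathfrak{m}_L$; so the derivative at the identity is zero mod $\mathfrak{m}_L$ and is certainly not surjective onto $\ad\overline\rho$. This is consistent with the tangent space computation $\dim T(D^\square_{\overline\rho})=\dim Z^1(G_K,\ad\overline\rho)=n^2(d+2)$: the relation imposes no first-order conditions, and the closed point is a genuine singularity.

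The paper gets around this without any transversality claim. It works directly with the regular local ring $\mathscr{O}_L\llbracket X_1,\dots,X_{d+2}\rrbracket$ and quotients \emph{further} by the $n^2d$ coordinates of $X_3,\dots,X_{d+2}$ and by $\varpi_L$; the result is the special fiber of $R'=\mathscr{O}_L\llbracket X_1,X_2\rrbracket/(\widetilde X_2\widetilde X_1\widetilde X_2^{-1}-\widetilde X_1^{q+1})$, which is the completion of $M(n,q+1)_{\mathscr{O}_L}$ at the identity and hence has dimension exactly $n^2$ by Lemma \ref{pairs}. Since the ambient power series ring is Cohen--Macaulay and the dimension drops by exactly the number $n^2(d+1)+1$ of equations imposed, those equations form a regular sequence; this yields the complete intersection property, the dimension $1+n^2(d+1)$, and (permuting $\varpi_L$ to the end of the sequence) $\mathscr{O}_L$-torsion-freeness, hence flatness. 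In short: Lemma \ref{pairs} is used to bound the dimension of a further quotient from above, not to exhibit a local product structure, and the regular-sequence conclusion is extracted from Krull's height theorem plus Cohen--Macaulayness rather than from any smoothness of the relation word. You should replace your implicit-function-theorem step with this "cut down further and count" argument.
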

\begin{proof}
Note $\mathscr{O}_L\llbracket X_1,\dots,X_{d+2} \rrbracket$ has dimension $1 + n^2(d+2)$ and we quotient out by $n^2$ equations so we would expect $R^\square_{\overline\rho}$ to have dimension $1+n^2(d+1)$. In fact, if we further quotient out by the $n^2d$ indeterminates defining $X_3,\dots,X_{d+2}$ we are left with
	\[ R' := \mathscr{O}_L\llbracket X_1,X_2 \rrbracket/(\widetilde{X}_1^q[\widetilde{X}_1,\widetilde{X}_2] - I), \]
which can be rewritten as
	\[ \mathscr{O}_L\llbracket X_1,X_2 \rrbracket/(\widetilde{X}_2\widetilde{X}_1\widetilde{X}_2^{-1} - \widetilde{X}_1^{q+1}), \]
whose formal spectrum is just the formal completion of $M(n,q+1)_{\mathscr{O}_L}$ at the closed $k_L$-point $x_0$ defined by $X = Y = I$, which one can see by noting that $M(n,q+1)_{\mathscr{O}_L}^{\wedge_{x_0}}$ represents a functor $\mathsf{Art}_{\mathscr{O}_L} \to \mathsf{Set}$, which is also representable by $R'$. Thus by Lemma \ref{pairs}, $\Spec R'$ is flat of relative dimension $n^2$ over $\Spec \mathscr{O}_L$, and thus has dimension $n^2+1$. Therefore, $\Spec R'/\varpi_L$ has dimension $n^2$. In summary, taking the quotient of $\mathscr{O}_L\llbracket X_1,\dots,X_{d+2} \rrbracket$ by the $n^2(d+1)+1$ equations
	\[ \widetilde{X}_2\widetilde{X}_1\widetilde{X}_2^{-1}-\widetilde{X}_1^{q+1},X_3,\dots,X_{d+2},\varpi_L \]
gives a ring of dimension $n^2$, so these $n^2(d+1)+1$ equations must form a regular sequence (in any order since $\mathscr{O}_L\llbracket X_1,\dots,X_{d+2} \rrbracket$ is local Noetherian) and thus $\dim R^\square_{\overline\rho} = 1 + n^2(d+1)$. We conclude that $R^\square_{\overline\rho}$ is complete intersection and therefore also Cohen-Macaulay. Note $\varpi$ by itself forms a regular sequence so $R^\square_{\overline\rho}$ is $\mathscr{O}_L$-torsion free, and in particular $\mathscr{O}_L$-flat since $\mathscr{O}_L$ is a DVR.
\end{proof}

\section{Normality}

In this section, we show that $X^\square_{\overline\rho}$ is a normal scheme, so that in fact its irreducible components are just the connected components.

For any closed point $x \in X^\square_{\overline\rho}$ we may define the residual representation
	\[ \rho_x: G_K \xrightarrow{\rho^\square} \mathrm{GL}_n(R^\square_{\overline\rho}) \to \mathrm{GL}_n(R^\square_{\overline\rho}[1/\varpi_L]) \to \mathrm{GL}_n(\kappa(x)) \]
where $\kappa(x)$ is the residue field of the stalk at $x \in X^\square_{\overline\rho}$.

\begin{lemma}\label{residue}
If $x \in X^\square_{\overline\rho}$ is a closed point, then the residue field $\kappa(x)$ is a finite extension of $L$, and the image of $\rho_x$ lands in $\mathrm{GL}_n(\mathscr{O}_{\kappa(x)})$, where $\mathscr{O}_{\kappa(x)} \subset \kappa(x)$ is the ring of integers.
\end{lemma}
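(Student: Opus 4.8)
The plan is to reduce both assertions to one algebraic statement: if $\mathfrak{p} \subset R^\square_{\overline\rho}[1/\varpi_L]$ is the maximal ideal corresponding to the closed point $x$ and $\mathfrak{q} := \mathfrak{p} \cap R^\square_{\overline\rho}$, then $B := R^\square_{\overline\rho}/\mathfrak{q}$ is module-finite over $\mathscr{O}_L$. Granting this, $B$ is integral over $\mathscr{O}_L$, so $\kappa(x) = R^\square_{\overline\rho}[1/\varpi_L]/\mathfrak{p} = B[1/\varpi_L] = \mathrm{Frac}(B)$ is finite over $L$, and $B$ is contained in the integral closure $\mathscr{O}_{\kappa(x)}$ of $\mathscr{O}_L$ in $\kappa(x)$. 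Since the evaluation map $R^\square_{\overline\rho}[1/\varpi_L] \to \kappa(x)$ carries $R^\square_{\overline\rho}$ onto $B$ and the matrix entries of $\rho^\square(g)$ lie in $R^\square_{\overline\rho}$ for every $g \in G_K$, it would then follow that $\rho_x$ takes values in $\mathrm{GL}_n(\mathscr{O}_{\kappa(x)})$ (indeed in $\mathrm{GL}_n(B)$).

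So the content is in showing $B$ is finite over $\mathscr{O}_L$. First I would note that $B$ is a complete Noetherian local domain (a quotient of $R^\square_{\overline\rho}$ by a prime) with residue field $k_L$, and that $\varpi_L \notin \mathfrak{q}$ because $\varpi_L$ is a unit in $R^\square_{\overline\rho}[1/\varpi_L]$ and $\mathfrak{p}$ is proper; hence $\varpi_L$ is a nonzero non-unit of $B$ and $B$ is $\mathscr{O}_L$-flat. Because $\mathfrak{p}$ is maximal, $B[1/\varpi_L]$ is a field, i.e. every nonzero prime of $B$ contains $\varpi_L$; consequently every height-one prime of $B$ is a minimal prime over $\varpi_L B$, of which there are only finitely many. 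A Noetherian domain with finitely many height-one primes has dimension $\le 1$: if $\mathfrak{P}$ had height $2$ it would be contained in no height-one prime, so by prime avoidance there would be a nonzero $x \in \mathfrak{P}$ lying in no height-one prime, which is impossible since any minimal prime over $xB$ has height one by Krull's principal ideal theorem. Thus $\dim B = 1$.

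Given $\dim B = 1$, the ideal $\varpi_L B$ is $\mathfrak{m}_B$-primary, so the $\varpi_L$-adic and $\mathfrak{m}_B$-adic topologies on $B$ coincide and $B$ is $\varpi_L$-adically complete and separated; moreover $B/\varpi_L B$ is an Artinian local ring with residue field $k_L$, hence a finite-dimensional $k_L$-vector space and in particular a finitely generated $\mathscr{O}_L/\varpi_L$-module. Lifting a generating set and applying the complete form of Nakayama's lemma then shows $B$ is a finitely generated $\mathscr{O}_L$-module. (Alternatively, the finiteness of residue fields at closed points of such generic fibers is standard and could simply be quoted, e.g.\ as part of the comparison with the Berthelot generic fiber in \cite[Section 7]{DeJ}.)

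I do not expect a real obstacle here; this is a standard warm-up lemma. The only point needing slight care is the reduction to the one-dimensional case: one must observe that being a \emph{closed} point of $\Spec R^\square_{\overline\rho}[1/\varpi_L]$, as opposed to an arbitrary point of the generic fiber, is exactly what forces $\dim B = 1$, after which the topological Nakayama argument is routine.
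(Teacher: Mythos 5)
Your proof is correct and follows essentially the same route as the paper: reduce to showing $B = R^\square_{\overline\rho}/\mathfrak{q}$ is module-finite over $\mathscr{O}_L$ by proving $\dim B = 1$, deducing that $B/\varpi_L B$ is Artinian, and concluding by the successive-approximation (topological Nakayama) argument available because $B$ is complete. The only difference is cosmetic: where the paper cites \cite[Lemma 2.3]{CDP} for the dimension count and \cite[Section 7.1.8]{DeJ} for integrality of the image, you supply self-contained elementary arguments (prime avoidance plus Krull's Hauptidealsatz, and integrality of the finite $\mathscr{O}_L$-algebra $B$), both of which are valid.
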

\begin{proof}
This proof is based on \cite[Lemma 5.1.1]{Bre}. Let $x = \mathfrak{m}$ be some maximal ideal in $R^\square_{\overline\rho}[1/\varpi_L]$, and let $\mathfrak{p}$ denote the corresponding prime ideal in $R^\square_{\overline\rho}$. Note $(R^\square_{\overline\rho}/\mathfrak{p})[1/\varpi_L] \cong R^\square_{\overline\rho}[1/\varpi_L]/\mathfrak{m}$, which is a field, so in particular $\dim((R^\square_{\overline\rho}/\mathfrak{p})[1/\varpi_L]) = 0$. By $\mathscr{O}_L$-flatness, $\varpi_L \in R^\square_{\overline\rho}$ is not nilpotent and $R^\square_{\overline\rho}/\mathfrak{p}$ is a local Noetherian domain, hence equidimensional, so \cite[Lemma 2.3]{CDP} tells us that $\dim(R^\square_{\overline\rho}/\mathfrak{p}) = 1$. Since $\varpi_L \not \in \mathfrak{p}$, it follows that $\dim R^\square_{\overline\rho}/(\varpi_L,\mathfrak{p}) = 0$, i.e.  $R^\square_{\overline\rho}/(\varpi_L,\mathfrak{p})$ is a local Artinian $\mathscr{O}_L$-algebra, hence its underlying set is finite.

Now fix a (finite) set $\widetilde{S}$ consisting of a lift in $R^\square_{\overline\rho}/\mathfrak{p}$ for each residue class in $R^\square_{\overline\rho}/(\varpi_L,\mathfrak{p})$. Then given an element $a_0 \in R^\square_{\overline\rho}/\mathfrak{p}$, by reducing mod $\varpi_L$ we can find $b_0$ generated by elements of $\widetilde{S}$ over $\mathscr{O}_L$ such that
	\[ a_0 - b_0 \in \varpi_L R^\square_{\overline\rho}/\mathfrak{p}. \]
So there is some $a_1 \in R^\square_{\overline\rho}/\mathfrak{p}$ such that $a_0 - b_0 = \varpi_La_1$. Repeating this for $a_1$, then $a_2$, etc, we find after rearranging that
	\[ a_0 = b_0 + \varpi_Lb_1 + \varpi_L^2b_2 + \cdots, \]
which converges, and by rearranging the terms we can express $a_0$ in terms of elements of $\widetilde{S}$ over $\mathscr{O}_L$.

Therefore, $R^\square_{\overline\rho}/\mathfrak{p}$ is a finitely generated $\mathscr{O}_L$-module and thus
	\[ (R^\square_{\overline\rho}/\mathfrak{p})[1/\varpi_L] = \kappa(x), \]
is a finite extension of $L$. Furthermore, the image of $R^\square_{\overline\rho}$ in $\kappa(x)$ lands in $\mathscr{O}_{\kappa(x)}$: this follows from the remarks in \cite[Section 7.1.8]{DeJ}.
\end{proof}

Thus we may write $\rho_x: G_K \to \mathrm{GL}_n(\mathscr{O}_{\kappa(x)})$.

\begin{proposition}\label{reducible}
If $x = \mathfrak{m} \in X^\square_{\overline\rho}$ is a singular (i.e. not regular) closed point, then $\rho_x$ is reducible.
\end{proposition}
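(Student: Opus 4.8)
The plan is to translate non-regularity of $x$ into the non-vanishing of a Galois cohomology group, and then invoke the classical fact that a nonzero self-twist forces reducibility. First I would identify the Zariski tangent space of $X^\square_{\overline\rho}$ at $x$. By Lemma~\ref{residue} the representation $\rho_x$ is valued in $\mathrm{GL}_n(\mathscr{O}_{\kappa(x)})$, so applying Proposition~\ref{representable} with $\kappa(x)$ in place of $\mathscr{O}_L$ shows that the complete local ring $\widehat{\mathscr{O}}_{X^\square_{\overline\rho},\, x}$ pro-represents the framed deformation functor of $\rho_x$ on Artinian local $\kappa(x)$-algebras with residue field $\kappa(x)$; as in the proof of Lemma~\ref{framedversal} its tangent space is therefore $Z^1(G_K, \ad\rho_x)$, the $\kappa(x)$-space of continuous $1$-cocycles. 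On the other hand, Proposition~\ref{dimension} and $\mathscr{O}_L$-flatness of $R^\square_{\overline\rho}$ show that $X^\square_{\overline\rho}$ is equidimensional of dimension $n^2(d+1)$, so $\dim \mathscr{O}_{X^\square_{\overline\rho},\, x} = n^2(d+1)$ and $x$ is regular if and only if $\dim_{\kappa(x)} Z^1(G_K,\ad\rho_x) = n^2(d+1)$.

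Next I would carry out the cohomological bookkeeping. Writing $h^i = \dim_{\kappa(x)} H^i(G_K,\ad\rho_x)$, the exact sequence
\[ 0 \to H^0(G_K,\ad\rho_x) \to \ad\rho_x \to Z^1(G_K,\ad\rho_x) \to H^1(G_K,\ad\rho_x) \to 0 \]
gives $\dim_{\kappa(x)} Z^1 = n^2 - h^0 + h^1$, while Tate's local Euler characteristic formula gives $h^0 - h^1 + h^2 = -d\,n^2$. Combining these, $\dim_{\kappa(x)} Z^1(G_K,\ad\rho_x) = n^2(d+1) + h^2$, so $x$ is singular if and only if $H^2(G_K,\ad\rho_x) \neq 0$. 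By Tate local duality together with the self-duality of $\ad\rho_x$ under the trace pairing, $H^2(G_K,\ad\rho_x)$ is dual to $H^0(G_K,\ad\rho_x \otimes \epsilon)$; hence if $x$ is singular there is a nonzero matrix $\phi \in \mathrm{Mat}_n(\kappa(x))$ with $\rho_x(g)\,\phi\,\rho_x(g)^{-1} = \epsilon(g)^{-1}\phi$ for all $g \in G_K$, i.e. a nonzero $G_K$-equivariant map $\phi \colon \rho_x \to \rho_x \otimes \epsilon^{-1}$.

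To finish I would argue that $\rho_x$ is reducible. The subspaces $\ker\phi$ and $\mathrm{im}\,\phi$ of $\kappa(x)^n$ are both $G_K$-stable. If $\phi$ were injective it would be an isomorphism for dimension reasons, and then taking determinants in $\rho_x(g)\,\phi\,\rho_x(g)^{-1} = \epsilon(g)^{-1}\phi$ would force $\epsilon(g)^n = 1$ for all $g \in G_K$, contradicting the fact that the cyclotomic character $\epsilon$ has infinite image in $\mathscr{O}_{\kappa(x)}^\times$. Therefore $\phi$ is not injective, so $\ker\phi$ is a proper nonzero $G_K$-stable subspace and $\rho_x$ is reducible.

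I expect the main obstacle to be the first step, namely making precise that $\widehat{\mathscr{O}}_{X^\square_{\overline\rho},\, x}$ pro-represents the framed deformation functor of $\rho_x$: this uses the integrality of $\rho_x$ from Lemma~\ref{residue} and a careful — if routine — unwinding of the moduli description of Proposition~\ref{representable} over $\kappa(x)$. The remaining ingredients (the Euler characteristic count, Tate local duality, and the determinant argument) are entirely standard.
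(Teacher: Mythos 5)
Your proposal is correct and follows essentially the same route as the paper: identify $\widehat{\mathscr{O}}_{X^\square_{\overline\rho},x}$ with the framed deformation ring of $\rho_x$, deduce $H^2(G_K,\ad\rho_x)\neq 0$ from singularity, dualize to get a nonzero map $\rho_x \to \rho_x\otimes\epsilon^{\pm 1}$, and rule out an isomorphism by comparing determinants. The only difference is presentational: for the first step the paper simply cites Kisin's generic-fiber lemma rather than unwinding the presentation, and it passes from singularity to obstructedness directly (unobstructed $\Rightarrow$ formally smooth) rather than via your explicit $\dim Z^1 = n^2(d+1)+h^2$ count, which is a valid equivalent given the equidimensionality supplied by Proposition~\ref{dimension}.
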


\begin{proof}
\cite[Section 2.3]{Kis09} (in particular Lemma 2.3.3 and Proposition 2.3.5) shows that the $\mathfrak{m}$-adic completion $(R^\square_{\overline\rho})_\mathfrak{m}^\wedge$ represents the framed deformation functor $D^\square_{\rho_x}: \mathsf{Art}_{\kappa(x)} \to \mathsf{Set}$. Since $x$ is singular, $(R^\square_{\overline\rho})_\mathfrak{m}^\wedge$ is not formally smooth, and thus the deformation problem for $\rho_x$ is obstructed, i.e. $H^2(G_K,\ad\rho_x) \neq 0$, but by local Tate duality, this is the same as $H^0(G_K,\ad\rho_x \otimes \epsilon) \neq 0$ (note the coadjoint representation is isomorphic to the adjoint representation for $\mathrm{GL}_n$), which can be rewritten as
	\[ \mathrm{Hom}_{\kappa(x)[G_K]}(\rho_x,\rho_x \otimes \epsilon) \neq 0. \]
Thus there exists some nonzero $\kappa(x)[G_K]$-map $\psi: \rho_x \to \rho_x \otimes \epsilon$, which can never be an isomorphism because $\det(\rho_x)$ and $\det(\rho_x \otimes \epsilon) = \epsilon^n \det(\rho_x)$ are non-isomorphic characters, so $\rho_x$ is reducible.
\end{proof}

\subsection{The Reducible Locus}

To control the singular locus, we imitate Geraghty's approach in \cite[Section 3]{Ger} and try to bound the locus of points of $X^\square_{\overline\rho}$ whose corresponding representation is reducible. The only real differences between what we do here and what Geraghty does in his thesis is that we will need to consider flag varieties for every parabolic subgroup (not just the Borel), and we need to parametrize pairs $(\rho,\mathrm{Fil}^\bullet)$ where $\rho$ fixes $\mathrm{Fil}^\bullet$ but does not fix a finer flag (see Remark \ref{irr}).

Pick a $k$-tuple of positive integers $\underline{n} = (n_1,\dots,n_k)$ such that $\sum_{i=1}^k n_i = n$, and let $\mathcal{F}_{\underline{n}} \in \mathsf{Sch}_{\mathscr{O}_L}$ be the flag variety associated to $\underline{n}$, i.e. the scheme representing the functor $\mathcal{F}_{\underline{n}}: \mathsf{Alg}_{\mathscr{O}_L} \to \mathsf{Set}$ defined by
	\[ \mathcal{F}_{\underline{n}}: A \mapsto \begin{Bmatrix} \text{filtrations } 0 \subset \mathrm{Fil}^1 \subset \dots \subset \mathrm{Fil}^k = A^n \text{ by projective} \\ \text{$A$-submodules that are locally direct summands,} \\ \text{and that satisfy rank}_A (\mathrm{Fil}^i) = n_1 + \dots + n_i\end{Bmatrix}. \]
Then $\Spec R^\square_{\overline\rho} \times_{\mathscr{O}_L} \mathcal{F}_{\underline{n}}$ is an $\mathscr{O}_L$-scheme whose $A$-points (for $A \in \mathsf{Alg}_{\mathscr{O}_L}$) are pairs $(f,\mathrm{Fil}^\bullet)$, where $\mathrm{Fil}^\bullet$ is as above, and $f: R^\square_{\overline\rho} \to A$ is an $\mathscr{O}_L$-algebra morphism. Note $f$ induces a representation
	\[ \rho_f: G_K \xrightarrow{\rho^\square} \mathrm{GL}_n(R^\square_{\overline\rho}) \xrightarrow{f} \mathrm{GL}_n(A). \]
Define a subfunctor $\mathcal{G}_{\underline{n}} \hookrightarrow \Spec R^\square_{\overline\rho} \times_{\mathscr{O}_L} \mathcal{F}_{\underline{n}}$ by
	\[ \mathcal{G}_{\underline{n}}(A) = \{(f,\mathrm{Fil}^\bullet) : \text{the action of $G_K$ on $A^n$ via $\rho_f$ preserves $\mathrm{Fil}^\bullet$}\}. \]
\begin{proposition}\label{closedimmersion}
$\mathcal{G}_{\underline{n}}$ is represented by a closed subscheme of $\Spec R^\square_{\overline\rho} \times_{\mathscr{O}_L} \mathcal{F}_{\underline{n}}$.
\end{proposition}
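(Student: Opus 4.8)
The plan is to realize $\mathcal{G}_{\underline n}$ as an intersection of zero loci of morphisms of vector bundles, one for each $g \in G_K$. Write $Y := \Spec R^\square_{\overline\rho}\times_{\mathscr{O}_L}\mathcal{F}_{\underline n}$, a Noetherian scheme. The projection $Y \to \Spec R^\square_{\overline\rho}$ together with $\rho^\square$ makes the free sheaf $\mathcal{E} := \mathcal{O}_Y^{\oplus n}$ into a sheaf with $G_K$-action, where $g \in G_K$ acts via the automorphism of $\mathcal{E}$ induced by $\rho^\square(g) \in \mathrm{GL}_n(R^\square_{\overline\rho})$; the projection $Y \to \mathcal{F}_{\underline n}$ pulls the universal filtration back to a filtration $0 = \mathrm{Fil}^0 \subset \mathrm{Fil}^1 \subset \cdots \subset \mathrm{Fil}^k = \mathcal{E}$ whose terms are everywhere locally direct summands of $\mathcal{E}$. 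In particular each $\mathrm{Fil}^i$ and each $\mathcal{E}/\mathrm{Fil}^i$ is a locally free $\mathcal{O}_Y$-module of finite rank; this is the only role played by the local-direct-summand hypothesis.

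Next I would dispose of a single $g$. For $g \in G_K$ and $1 \leq i \leq k-1$ consider the morphism of locally free $\mathcal{O}_Y$-modules
\[ \theta_{g,i}\colon \mathrm{Fil}^i \hookrightarrow \mathcal{E} \xrightarrow{\ \rho^\square(g)\ } \mathcal{E} \twoheadrightarrow \mathcal{E}/\mathrm{Fil}^i . \]
Its zero locus $Z_{g,i} \hookrightarrow Y$ is a closed subscheme: Zariski-locally on $Y$ the sheaves in sight are free and $\theta_{g,i}$ is given by a matrix of sections of $\mathcal{O}_Y$, and $Z_{g,i}$ is locally cut out by the ideal those sections generate; this is independent of the trivializations, so the local pieces glue. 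Unwinding the definitions, an $A$-point $(f, \mathrm{Fil}^\bullet)$ of $Y$ factors through $Z_{g,i}$ exactly when the base-changed composite $\mathrm{Fil}^i \hookrightarrow A^n \xrightarrow{\rho_f(g)} A^n \twoheadrightarrow A^n/\mathrm{Fil}^i$ is zero, i.e.\ exactly when $\rho_f(g)\,\mathrm{Fil}^i \subseteq \mathrm{Fil}^i$.

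Finally I would assemble these. Set $\mathcal{Z} := \bigcap_{g \in G_K,\ 1 \leq i \leq k-1} Z_{g,i}$. An arbitrary intersection of closed subschemes is a closed subscheme — the associated quasi-coherent ideal is the sum of the individual ones, and since $Y$ is Noetherian this sum already equals a finite subsum — and its functor of points is the intersection of the functors of points of the $Z_{g,i}$. Hence an $A$-point $(f,\mathrm{Fil}^\bullet)$ lies in $\mathcal{Z}$ if and only if $\rho_f(g)\,\mathrm{Fil}^i \subseteq \mathrm{Fil}^i$ for all $g$ and all $i$; feeding in $g^{-1}$ as well, this is equivalent to $\rho_f(g)\,\mathrm{Fil}^i = \mathrm{Fil}^i$ for all $g,i$, that is, to $\rho_f$ preserving the flag $\mathrm{Fil}^\bullet$. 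Therefore $\mathcal{Z} = \mathcal{G}_{\underline n}$.

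There is no genuine obstacle here; the only thing to watch is that the local-direct-summand condition is exactly what makes $\theta_{g,i}$ a morphism of vector bundles, so that its zero locus is a closed subscheme representing the correct subfunctor, together with the elementary bookkeeping that the containment condition over all of $G_K$ upgrades to equality. One could instead package the step over a fixed $g$ group-theoretically: the stabilizer of a point of $\mathcal{F}_{\underline n}$ is the group of points of a parabolic subgroup scheme of $\mathrm{GL}_{n,\mathscr{O}_L}$, equivalently $Z_{g,i}$ (for all $i$ at once) is the preimage of the closed incidence locus $\{(M,F) : M F = F\} \subseteq \mathrm{GL}_{n,\mathscr{O}_L}\times_{\mathscr{O}_L}\mathcal{F}_{\underline n}$ — closed because $\mathcal{F}_{\underline n}$ is separated — under the morphism $\rho^\square(g)\times\mathrm{id}_{\mathcal{F}_{\underline n}}$.
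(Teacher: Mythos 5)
Your proof is correct and is essentially the paper's argument in global form: the paper verifies relative representability by pulling back to each $A$-point and taking the ideal generated by the entries of the composite $\mathrm{Fil}^i \hookrightarrow A^n \xrightarrow{\rho_f(g)} A^n \twoheadrightarrow A^n/\mathrm{Fil}^i$ (written there via a chosen complement $N_i$), which is exactly your $\theta_{g,i}$ fibre by fibre. The only difference is packaging — zero loci of vector-bundle maps intersected over $g,i$ versus a pointwise Cartesian-square check — and both rest on the same vanishing condition, so there is nothing to correct.
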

\begin{proof}
It suffices to show that for any $A \in \mathsf{Alg}_{\mathscr{O}_L}$ and any $A$-point of $\Spec R^\square_{\overline\rho} \times_{\mathscr{O}_L} \mathcal{F}_{\underline{n}}$, there exists an ideal $I \subseteq A$ and a map $\mathsf{Alg}_{\mathscr{O}_L}(A/I,-) \to \mathcal{G}_{\underline{n}}$ such that 
\begin{center}\begin{tikzcd}
\mathsf{Alg}_{\mathscr{O}_L}(A/I,-) \dar[hook] \rar & \mathcal{G}_{\underline{n}} \dar[hook] \\
\mathsf{Alg}_{\mathscr{O}_L}(A,-) \rar & \Spec R^\square_{\overline\rho} \times_{\mathscr{O}_L} \mathcal{F}_{\underline{n}}
\end{tikzcd}\end{center}
is Cartesian.

Fix an $A$-point of $\Spec R^\square_{\overline\rho} \times_{\mathscr{O}_L} \mathcal{F}_{\underline{n}}$, which gives a pair $(f,\mathrm{Fil}^\bullet)$ as before. Then since $\mathrm{Fil}^\bullet$ is a filtration of direct summands, we can fix complementary $A$-submodules $N_i \subset A^n$ such that $\mathrm{Fil}^i \oplus N_i = A^n$. These come with surjective projection maps $A^n \xrightarrow{\pi_i} N_i$.

We can now define the ideal $I \subseteq A$ generated by the coefficients of $\pi_i\rho_f(g)v$ with respect to the standard basis of $A^n$ for all $g \in G_K$ and all $v \in \mathrm{Fil}^i$, for each $i = 1,\dots,r$. If we write
	\[ \rho_{f,I}: G_K \xrightarrow{\rho_f} \mathrm{GL}_n(A) \to \mathrm{GL}_n(A/I) \]
then $(R^\square_{\overline\rho} \xrightarrow{f} A \to A/I,\mathrm{Fil}^\bullet \otimes_A A/I)$ is an $A/I$-point of $\mathcal{G}_{\underline{n}}$ (because $\rho_{f,I}$ is now forced to fix $\mathrm{Fil}^\bullet \otimes_A A/I$), which thus gives us the desired map $\mathsf{Alg}_{\mathscr{O}_L}(A/I,-) \to G_{\underline{n}}$. Now given a diagram
\begin{center}\begin{tikzcd}
F \arrow[bend right]{ddr} \arrow[bend left=15]{rrd} \arrow[dashed]{rd}{\exists!} \\
& \mathsf{Alg}_{\mathscr{O}_L}(A/I,-) \dar[hook] \rar & \mathcal{G}_{\underline{n}} \dar[hook] \\
& \mathsf{Alg}_{\mathscr{O}_L}(A,-) \rar & \Spec R^\square_{\overline\rho} \times \mathcal{F}_{\underline{n}}
\end{tikzcd}\end{center}
one checks easily that we get a unique map $F \to \mathsf{Alg}_{\mathscr{O}_L}(A/I,-)$.
\end{proof}

\begin{definition}
For a positive integer $m$, denote by $P(m)$ the finite set of ordered partitions (viewed as ordered tuples) of the integer $m$. If $\underline{m} = (m_1,\dots,m_k)$ is a $k$-tuple of integers for $k \geq 1$, then let $P(\underline{m})$ be the image of the natural concatenation map
	\[ P(m_1) \times \dots \times P(m_k) \to P(m_1 + \dots + m_k)  \]
Finally, let $P(\underline{m})^\circ = P(\underline{m}) \setminus \{\underline{m}\}$ and $P(m)^\circ = P(m) \setminus \{(m)\}$.
\end{definition}

\begin{lemma}
If $\underline{n}' \in P(\underline{n})^\circ$, then the natural map $\mathcal{G}_{\underline{n}'} \to \mathcal{G}_{\underline{n}}$ (taking a filtration of shape $\underline{n}'$ and only remembering that it gives a filtration of shape $\underline{n}$) is proper.
\end{lemma}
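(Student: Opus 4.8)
\emph{Plan.} The idea is that properness of $\mathcal{G}_{\underline{n}'} \to \mathcal{G}_{\underline{n}}$ should follow formally from three inputs: the closed immersions of Proposition \ref{closedimmersion}, properness of the analogous forgetful map between flag varieties, and the standard cancellation property for proper morphisms (if $g \circ f$ is proper and $g$ is separated, then $f$ is proper). No geometric work beyond the properness of (relative) flag varieties is needed.

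\emph{Step 1: the map on flag varieties.} Since $\underline{n}' \in P(\underline{n})$, a filtration of shape $\underline{n}'$ contains among its steps a unique coarsening of shape $\underline{n}$, so there is a forgetful morphism of $\mathscr{O}_L$-schemes $\mathcal{F}_{\underline{n}'} \to \mathcal{F}_{\underline{n}}$. Both $\mathcal{F}_{\underline{n}'}$ and $\mathcal{F}_{\underline{n}}$ are projective over $\mathscr{O}_L$ (Zariski-locally on $\mathcal{F}_{\underline{n}}$, where the tautological filtration splits, the map even exhibits $\mathcal{F}_{\underline{n}'}$ as a product of partial flag bundles of the graded pieces), so $\mathcal{F}_{\underline{n}'} \to \mathcal{F}_{\underline{n}}$ is proper: the source is proper over $\mathscr{O}_L$ and the target is separated over $\mathscr{O}_L$, so the cancellation property applies. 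Base changing along $\Spec R^\square_{\overline\rho} \to \Spec \mathscr{O}_L$, the map $\Spec R^\square_{\overline\rho} \times_{\mathscr{O}_L} \mathcal{F}_{\underline{n}'} \to \Spec R^\square_{\overline\rho} \times_{\mathscr{O}_L} \mathcal{F}_{\underline{n}}$ is proper.

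\emph{Step 2: existence and factorization of $\mathcal{G}_{\underline{n}'} \to \mathcal{G}_{\underline{n}}$.} On $A$-points this sends a pair $(f, \mathrm{Fil}^\bullet)$ with $\rho_f$ preserving the shape-$\underline{n}'$ filtration $\mathrm{Fil}^\bullet$ to $(f, \mathrm{Fil}^\bullet \text{ coarsened to shape } \underline{n})$, which $\rho_f$ still preserves; by the Yoneda lemma this functorial assignment is induced by a morphism of schemes, and by construction it is compatible with the forgetful map of Step 1 and the closed immersions $\mathcal{G}_{\underline{n}'} \hookrightarrow \Spec R^\square_{\overline\rho} \times_{\mathscr{O}_L} \mathcal{F}_{\underline{n}'}$ and $\mathcal{G}_{\underline{n}} \hookrightarrow \Spec R^\square_{\overline\rho} \times_{\mathscr{O}_L} \mathcal{F}_{\underline{n}}$ of Proposition \ref{closedimmersion}.

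\emph{Step 3: conclude.} The composite $\mathcal{G}_{\underline{n}'} \hookrightarrow \Spec R^\square_{\overline\rho} \times_{\mathscr{O}_L} \mathcal{F}_{\underline{n}'} \to \Spec R^\square_{\overline\rho} \times_{\mathscr{O}_L} \mathcal{F}_{\underline{n}}$ is a composition of a closed immersion (hence proper) with a proper morphism, hence proper. By Step 2 it factors through the closed immersion $\mathcal{G}_{\underline{n}} \hookrightarrow \Spec R^\square_{\overline\rho} \times_{\mathscr{O}_L} \mathcal{F}_{\underline{n}}$, which is in particular separated, so the cancellation property for proper morphisms gives that $\mathcal{G}_{\underline{n}'} \to \mathcal{G}_{\underline{n}}$ is proper. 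I do not anticipate a real obstacle: the only nonformal ingredient is the (well-known) properness of relative partial flag varieties, and the mild care required is in Step 2, namely checking that the set-theoretic ``coarsening'' recipe genuinely upgrades to a morphism of schemes with the stated factorization, which the functorial descriptions make transparent.
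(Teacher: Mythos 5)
Your argument is correct and is essentially the paper's own proof: the paper uses the same commutative square with the closed immersions into $\Spec R^\square_{\overline\rho} \times_{\mathscr{O}_L} \mathcal{F}_{\underline{n}'}$ and $\Spec R^\square_{\overline\rho} \times_{\mathscr{O}_L} \mathcal{F}_{\underline{n}}$, the properness of the forgetful map of flag varieties, and the cancellation property to conclude that the top arrow is proper. Your write-up just makes explicit the details (Yoneda compatibility, cancellation via separatedness) that the paper compresses into ``all of the other arrows are proper, so the top is as well.''
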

\begin{proof}
The partial flag varieties $\mathcal{F}_{\underline{n}'}$ and $\mathcal{F}_{\underline{n}}$ are proper over $\Spec \mathscr{O}_L$, so the map $\mathcal{F}_{\underline{n}'} \to \mathcal{F}_{\underline{n}}$ is proper. We want the top arrow in the following diagram (which is not Cartesian!) to be proper:
\begin{center}\begin{tikzcd}
	\mathcal{G}_{\underline{n}'} \rar \dar[hook] & \mathcal{G}_{\underline{n}} \dar[hook] \\
	\Spec R^\square_{\overline\rho} \times_{\mathscr{O}_L} \mathcal{F}_{\underline{n}'} \rar{\text{proper}} & \Spec R^\square_{\overline\rho} \times_{\mathscr{O}_L} \mathcal{F}_{\underline{n}}
\end{tikzcd}\end{center}
But all of the other arrows are proper, so the top is as well.
\end{proof}

Putting together these maps, we obtain a map
	\[ \bigsqcup_{\underline{n}' \in P(\underline{n})^\circ} \mathcal{G}_{\underline{n}'} \to \mathcal{G}_{\underline{n}}. \]
Since each $\mathcal{G}_{\underline{n}'} \to \mathcal{G}_{\underline{n}}$ is closed (by properness), and the disjoint union is taken over a finite set, the (set-theoretic) image of this map is closed: denote by $\mathcal{G}_{\underline{n}}^{\mathrm{irr}}$ its open complement with the natural subscheme structure.

\begin{remark}\label{irr}
To motivate this definition, note that $\mathcal{G}_{\underline{n}}^{\mathrm{irr}}$ should parametrize pairs $(f,\mathrm{Fil}^\bullet)$ where the induced representation $\rho_f$ fixes $\mathrm{Fil}^\bullet$ but does not fix any finer filtration in $\mathcal{F}_{\underline{n}'}$ for $\underline{n}' \in P(\underline{n})^\circ$ after base changing to an algebraic closure: in fact, $\mathcal{G}_{\underline{n}}^{\mathrm{irr}}$ represents the functor that takes an $\mathscr{O}_L$-algebra $A$ to the set of pairs $(f,\mathrm{Fil}^\bullet)$ such that $\rho_f$ fixes $\mathrm{Fil}^\bullet$ and such that for all geometric points $\overline{s}$ of $\Spec A$, the representation $\rho_{f,\overline{s}}$ does not fix any filtration strictly refining $\mathrm{Fil}^\bullet_{\overline{s}}$.
\end{remark}

Finally, consider the map
	\[ \bigsqcup_{\underline{n} \in P(n)^\circ} \mathcal{G}_{\underline{n}}^{\mathrm{irr}} \to \bigsqcup_{\underline{n} \in P(n)^\circ} \mathcal{G}_{\underline{n}} \to \bigsqcup_{\underline{n} \in P(n)^\circ} (\Spec R_{\overline\rho}^\square \times_{\mathscr{O}_L} \mathcal{F}_{\underline{n}}) \to \Spec R_{\overline\rho}^\square. \]
After passing to the generic fiber, we obtain a map
	\[ \bigsqcup_{\underline{n} \in P(n)^\circ} \mathcal{G}_{\underline{n}}^{\mathrm{irr}}[1/\varpi_L] \to X^\square_{\overline\rho}. \]
The image is closed: to see this note the image is the same as the image of
	\[ \bigsqcup_{\underline{n} \in P(n)^\circ} \mathcal{G}_{\underline{n}}[1/\varpi_L] \to X^\square_{\overline\rho}, \]
(this follows from the moduli description of $\mathcal{G}_{\underline{n}}^{\mathrm{irr}}$ given in Remark \ref{irr}) and then note that $\mathcal{G}_{\underline{n}} \to \Spec R_{\overline\rho}^\square \times_{\mathscr{O}_L} \mathcal{F}_{\underline{n}} \to \Spec R_{\overline\rho}^\square$ is proper. So the scheme-theoretic image is a closed subscheme $X^{\square,\mathrm{red}}_{\overline\rho} \subset X^\square_{\overline\rho}$. Note furthermore, that since $R_{\overline\rho}^\square$ is excellent (as it is complete local Noetherian) the singular locus $X^{\square,\mathrm{sing}}_{\overline\rho} \subset X^\square_{\overline\rho}$ is closed. In fact,
\begin{corollary}
$X^{\square,\mathrm{sing}}_{\overline\rho} \subseteq X^{\square,\mathrm{red}}_{\overline\rho}$.
\end{corollary}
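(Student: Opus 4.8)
The plan is to reduce to closed points and feed in Proposition~\ref{reducible}. Both $X^{\square,\mathrm{sing}}_{\overline\rho}$ and $X^{\square,\mathrm{red}}_{\overline\rho}$ are closed subsets of $X^\square_{\overline\rho}$ (the former because $R^\square_{\overline\rho}$ is excellent, the latter by construction as a scheme-theoretic image of a quasi-compact morphism with closed set-theoretic image). Since $R^\square_{\overline\rho}$ is complete local Noetherian, $R^\square_{\overline\rho}[1/\varpi_L]$ is Jacobson, so $X^\square_{\overline\rho}$ is a Jacobson scheme and every closed subset is the closure of its set of closed points. Hence it suffices to check that every closed point $x \in X^{\square,\mathrm{sing}}_{\overline\rho}$ lies in $X^{\square,\mathrm{red}}_{\overline\rho}$, and then take closures.

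So let $x$ be a closed point of $X^\square_{\overline\rho}$ lying in the singular locus, i.e.\ a singular closed point. By Proposition~\ref{reducible} the representation $\rho_x \colon G_K \to \mathrm{GL}_n(\kappa(x))$ is reducible, so there is a $G_K$-stable $\kappa(x)$-subspace $0 \subsetneq W \subsetneq \kappa(x)^n$; put $n_1 = \dim_{\kappa(x)} W$ and $\underline{n} = (n_1, n-n_1) \in P(n)^\circ$ (nonempty since $n>1$). Writing $f_x \colon R^\square_{\overline\rho} \to \kappa(x)$ for the homomorphism corresponding to $x$, the pair $(f_x, (0 \subset W \subset \kappa(x)^n))$ is a $\kappa(x)$-point of $\mathcal{G}_{\underline{n}}$ — over a field every subspace is a direct summand, so the conditions defining $\mathcal{F}_{\underline{n}}$ are automatic, and $\rho_{f_x} = \rho_x$ preserves the flag by choice of $W$. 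Since $f_x$ sends $\varpi_L$ to a unit, this point lies over $x$ under $\mathcal{G}_{\underline{n}}[1/\varpi_L] \to X^\square_{\overline\rho}$. By the discussion preceding the corollary, the set-theoretic image of $\bigsqcup_{\underline{n} \in P(n)^\circ} \mathcal{G}_{\underline{n}}[1/\varpi_L] \to X^\square_{\overline\rho}$ is closed and coincides with the underlying closed set of $X^{\square,\mathrm{red}}_{\overline\rho}$; therefore $x \in X^{\square,\mathrm{red}}_{\overline\rho}$. Taking closures over all such $x$ gives $X^{\square,\mathrm{sing}}_{\overline\rho} \subseteq X^{\square,\mathrm{red}}_{\overline\rho}$.

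I do not expect a genuine obstacle here: the only point requiring care is the passage to closed points — Proposition~\ref{reducible} only produces information at closed points, so one must know $X^\square_{\overline\rho}$ is Jacobson in order to conclude for all points — and the (entirely formal) translation between ``$\rho_x$ is reducible'' and ``$x$ lies in the image of some $\mathcal{G}_{\underline{n}}$''. Everything else is unwinding the definitions of $\mathcal{G}_{\underline{n}}$ and $X^{\square,\mathrm{red}}_{\overline\rho}$.
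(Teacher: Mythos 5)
Your proof is correct and follows essentially the same route as the paper: reduce to closed points via the Jacobson property, apply Proposition~\ref{reducible} to get reducibility, and exhibit a point of some $\mathcal{G}_{\underline{n}}[1/\varpi_L]$ lying over $x$. The only (harmless) difference is that you produce a point of $\mathcal{G}_{\underline{n}}$ and invoke the previously established equality of set-theoretic images, whereas the paper chooses $\underline{n}$ minimal over $\overline{\kappa(x)}$ so as to land directly in $\mathcal{G}^{\mathrm{irr}}_{\underline{n}}$.
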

\begin{proof}
\cite[Corollaire 10.5.9 and Proposition 10.3.2]{EGA} imply that $X^\square_{\overline\rho}$ is a Jacobson scheme so it again follows from Proposition 10.3.2 that the closed subset $X^{\square,\mathrm{sing}}_{\overline\rho}$ is Jacobson, which implies that it suffices to show that singular closed points are contained in $X^{\square,\mathrm{red}}_{\overline\rho}$. So pick $x \in X^\square_{\overline\rho}$ that is a singular closed point. By Proposition \ref{reducible}, $\rho_x$ stabilizes a flag $\mathrm{Fil}^\bullet_{\overline{x}} \in \mathcal{F}_{\underline{n}}(\overline{\kappa(x)})$ of some shape determined by $\underline{n} \in P(n)^\circ$ and we can assume that $\underline{n}$ is minimal for this property (extending to a larger algebraically closed field will not affect minimality), so we get a point $(f_{\overline{x}},\mathrm{Fil}^\bullet_{\overline{x}}) \in \mathcal{G}^{\mathrm{irr}}_{\underline{n}}(\overline{\kappa(x)})$, where $f_x$ is the map to the algebraic closure of the residue field. Thus, $\rho_x$ is in the image of the map $\mathcal{G}^{\mathrm{irr}}_{\underline{n}}[1/\pi_L] \to X^\square_{\overline\rho}$.
\end{proof}

Therefore,
	\[ \dim X^{\square,\mathrm{sing}}_{\overline\rho} \leq \dim X^{\square,\mathrm{red}}_{\overline\rho} \leq \max_{\underline{n} \in P(n)^\circ} \dim \mathcal{G}_{\underline{n}}^{\mathrm{irr}}[1/\varpi_L], \]
so in the remainder of this section, we will bound the dimension of each $\mathcal{G}_{\underline{n}}^{\mathrm{irr}}$: later, in Proposition \ref{dimr}, we will see why we need to restrict to $\mathcal{G}_{\underline{n}}^{\mathrm{irr}}$ inside $\mathcal{G}_{\underline{n}}$.

\subsection{Dimension Counting}

To compute the dimension of $\mathcal{G}^{\mathrm{irr}}_{\underline{n}}[1/\varpi_L]$, we can compute
	\[ \max_{x \in \mathcal{G}^{\mathrm{irr}}_{\underline{n}}[1/\varpi_L] \text{ closed}} \dim \mathscr{O}_{\mathcal{G}^{\mathrm{irr}}_{\underline{n}}[1/\varpi_L],x} = \max_{x \in \mathcal{G}^{\mathrm{irr}}_{\underline{n}}[1/\varpi_L] \text{ closed}} \dim \mathscr{O}_{\mathcal{G}_{\underline{n}},x} = \max_{x \in \mathcal{G}^{\mathrm{irr}}_{\underline{n}}[1/\varpi_L] \text{ closed}} \dim \widehat{\mathscr{O}}_{\mathcal{G}_{\underline{n}},x}. \]
We showed in Lemma \ref{residue} that the residue field of a closed point $x \in X^\square_{\overline\rho}$ is a finite extension of $L$. Given a closed point $x \in \mathcal{G}^{\mathrm{irr}}_{\underline{n}}[1/\varpi_L]$, what can we say about its residue field? The map
	\[ \mathcal{G}_{\underline{n}}[1/\varpi_L] \hookrightarrow (\Spec R_{\overline\rho}^\square \times_{\mathscr{O}_L} \mathcal{F}_{\underline{n}})[1/\pi_L] \to X^\square_{\overline\rho} \]
is locally of finite type, so by \cite[Corollaire 10.4.7]{EGA}, the image of $x$ in $X^\square_{\overline\rho}$ is a closed point, and thus we can take the field of definition $F$ of $x$ to be a finite extension of the residue field of its image in $X^\square_{\overline\rho}$, which is in turn a finite extension of $L$. Note
	\[ \mathcal{G}_{\underline{n}} \hookrightarrow \Spec R^\square_{\overline\rho} \times_{\mathscr{O}_L} \mathcal{F}_{\underline{n}} \to \Spec R^\square_{\overline\rho} \]
is proper, so we apply the valuative criterion of properness to the diagram
\begin{center}\begin{tikzcd}
	\Spec F \rar{x} \dar & \mathcal{G}_{\underline{n}} \dar \\
	\Spec \mathscr{O}_F \urar[dashed] \rar & \Spec R^\square_{\overline\rho}
\end{tikzcd}\end{center}
to get a lift $x: \Spec \mathscr{O}_F \to \mathcal{G}_{\underline{n}}$ (by abuse of notation we call both points $x$). But this corresponds to some map $f_x: R^\square_{\overline\rho} \to \mathscr{O}_F$ and some $\mathrm{Fil}^\bullet_x \in \mathcal{F}_{\underline{n}}(\mathscr{O}_F)$. Note $f_x$ induces a representation
	\[ \rho_x: G_K \to \mathrm{GL}_n(\mathscr{O}_F). \]
We also get a map
	\[ \Spec \widehat{\mathscr{O}}_{\mathcal{G}_{\underline{n}},x} \to \Spec \mathscr{O}_{\mathcal{G}_{\underline{n}},x} \to \mathcal{G}_{\underline{n}}, \]
which determines a representation $\widehat{\rho}_x: G_K \to \mathrm{GL}_n(\widehat{\mathscr{O}}_{\mathcal{G}_{\underline{n}},x})$ and a filtration $\widehat{\mathrm{Fil}}^\bullet_x \in \mathcal{F}_{\underline{n}}(\widehat{\mathscr{O}}_{\mathcal{G}_{\underline{n}},x})$.

Let $\mathsf{Art}_F$ denote the category whose objects are local Artinian $F$-algebras $A$ together with a surjective reduction map $A \twoheadrightarrow F$, and whose morphisms are local homomorphisms $A \to B$ respecting the reduction maps to $F$. Now let $D_{\rho_x,\underline{n}}^\square: \mathsf{Art}_F \to \mathsf{Set}$ be the functor taking $B$ to the set of pairs $(\rho,\mathrm{Fil}^\bullet)$ of continuous $\rho: G_K \to \mathrm{GL}_n(B)$ lifting $\rho_x$ and $\mathrm{Fil}^\bullet \in \mathcal{F}_{\underline{n}}(B)$ lifting $\mathrm{Fil}^\bullet_x$ such that $\rho$ preserves $\mathrm{Fil}^\bullet$.
\begin{proposition}\label{localg}
$D_{\rho_x,\underline{n}}^\square$ is pro-representable by $\widehat{\mathscr{O}}_{\mathcal{G}_{\underline{n}},x}$ with the universal representation $\widehat{\rho_x}$ and universal filtration $\widehat{\mathrm{Fil}}^\bullet_x$.
\end{proposition}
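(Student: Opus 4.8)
The plan is to recognize $D^\square_{\rho_x,\underline{n}}$ as the functor of infinitesimal neighbourhoods of $x$ inside $\mathcal{G}_{\underline{n}}$, and then to apply the general principle that the completed local ring of a functor-representing scheme at a point pro-represents that neighbourhood functor; the one genuinely local input is the identification of the completed local ring of $R^\square_{\overline\rho}$ at $x$ that was used already in the proof of Proposition \ref{reducible}.

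First I would record that $\mathcal{G}_{\underline{n}}$ is a Noetherian scheme, being a closed subscheme of $\Spec R^\square_{\overline\rho}\times_{\mathscr{O}_L}\mathcal{F}_{\underline{n}}$, which is of finite type over the Noetherian ring $R^\square_{\overline\rho}$. Hence $\widehat{\mathscr{O}}_{\mathcal{G}_{\underline{n}},x}$ is a complete Noetherian local ring; since its residue field $F$ has characteristic $0$ it contains a unique coefficient field, so it is naturally a complete local $F$-algebra and $h_{\widehat{\mathscr{O}}_{\mathcal{G}_{\underline{n}},x}}$ is a well-defined functor on $\mathsf{Art}_F$. For $B\in\mathsf{Art}_F$ the maximal ideal $\mathfrak{m}_B$ is nilpotent, so any local $F$-algebra homomorphism $\widehat{\mathscr{O}}_{\mathcal{G}_{\underline{n}},x}\to B$ factors through an Artinian quotient $\widehat{\mathscr{O}}_{\mathcal{G}_{\underline{n}},x}/\mathfrak{m}^k=\mathscr{O}_{\mathcal{G}_{\underline{n}},x}/\mathfrak{m}^k$, and is therefore the same datum as a morphism $\Spec B\to\mathcal{G}_{\underline{n}}$ over $\mathscr{O}_L$ whose image is $x$ and which induces the identity on residue fields (uniqueness of the coefficient field in characteristic $0$ makes compatibility with the $F$-algebra structures automatic). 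This identification is routine, and I would either cite \cite{stacks-project} or spell it out.

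Next I would unwind the source. By Proposition \ref{closedimmersion}, a morphism $\Spec B\to\mathcal{G}_{\underline{n}}$ is a pair $(f,\mathrm{Fil}^\bullet)$ consisting of an $\mathscr{O}_L$-algebra map $f: R^\square_{\overline\rho}\to B$ and a filtration $\mathrm{Fil}^\bullet\in\mathcal{F}_{\underline{n}}(B)$ with $\rho_f$ preserving $\mathrm{Fil}^\bullet$, and the condition on the image of $x$ says exactly that $f$ and $\mathrm{Fil}^\bullet$ reduce modulo $\mathfrak{m}_B$ to the data defining $x$. Such a $\mathrm{Fil}^\bullet$ is literally a lift of $\mathrm{Fil}^\bullet_x$, i.e. the flag part of an element of $D^\square_{\rho_x,\underline{n}}(B)$; and, exactly as in the proof of Proposition \ref{reducible}, \cite[Lemma 2.3.3 and Proposition 2.3.5]{Kis09} identify the $\mathscr{O}_L$-algebra maps $f: R^\square_{\overline\rho}\to B$ reducing to the one that classifies $\rho_x$ with the continuous lifts $\rho: G_K\to\mathrm{GL}_n(B)$ of $\rho_x$, via $f\mapsto\rho_f$. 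Since ``$\rho_f$ preserves $\mathrm{Fil}^\bullet$'' is just ``$\rho$ preserves $\mathrm{Fil}^\bullet$'', this gives $h_{\widehat{\mathscr{O}}_{\mathcal{G}_{\underline{n}},x}}(B)\cong D^\square_{\rho_x,\underline{n}}(B)$ naturally in $B$, which is the asserted pro-representability. Finally, letting $B$ range over the Artinian quotients of $\widehat{\mathscr{O}}_{\mathcal{G}_{\underline{n}},x}$ and following the identity map through these bijections identifies the universal object with the pair carved out by the tautological morphism $\Spec\widehat{\mathscr{O}}_{\mathcal{G}_{\underline{n}},x}\to\mathcal{G}_{\underline{n}}$, namely $(\widehat{\rho_x},\widehat{\mathrm{Fil}}^\bullet_x)$.

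The only step that is not pure formalism is the dictionary between $\mathscr{O}_L$-algebra maps out of $R^\square_{\overline\rho}$ and genuine Galois lifts of $\rho_x$ over the characteristic-zero base $B$: since $\rho_x$ is a priori valued only in $\mathrm{GL}_n(\mathscr{O}_F)$ whereas $B$ is an $F$-algebra, this needs the standard but slightly delicate coefficient-ring bookkeeping that is the content of the cited results of Kisin and that is already invoked elsewhere in this paper. Everything else is manipulation of representable functors and completed local rings, made legitimate by the Noetherianity of $\mathcal{G}_{\underline{n}}$, so I expect the remaining work to be careful bookkeeping rather than a genuine obstacle.
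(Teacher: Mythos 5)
Your proposal is correct and follows essentially the same route as the paper: both reduce the statement to the identification of local $F$-algebra maps $\widehat{\mathscr{O}}_{\mathcal{G}_{\underline{n}},x}\to B$ with $B$-points of $\mathcal{G}_{\underline{n}}$ supported at $x$, and both recognize that the only substantive input is the dictionary between $\mathscr{O}_L$-algebra maps $R^\square_{\overline\rho}\to B$ and continuous Galois lifts of $\rho_x$ over the characteristic-zero Artinian base. The only difference is that you delegate that last step to Kisin's results, whereas the paper reproduces the argument (factoring $\rho$ through a finitely generated local $\mathscr{O}_F$-subalgebra of $B$ à la \cite[Proposition 9.5]{Kis03} and then invoking Lemma \ref{basechange}), together with the accompanying uniqueness check.
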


Before proving the proposition, we note a lemma.

\begin{lemma}\label{basechange}
If $E/L$ is a finite extension, then the universal framed deformation problem $D^\square_{\overline\rho_E}: \mathsf{Art}_{\mathscr{O}_E} \to \mathsf{Set}$ for the trivial representation $\overline\rho_E: G_K \to \mathrm{GL}_n(k_E)$ is pro-represented by $R^\square_{\overline\rho} \otimes_{\mathscr{O}_L} \mathscr{O}_E$. In particular, a lift of $\overline\rho_E$ to $A \in \mathsf{Art}_{\mathscr{O}_E}$ is given by a unique $\mathscr{O}_E$-algebra map $R^\square_{\overline\rho} \otimes_{\mathscr{O}_L} \mathscr{O}_E \to A$ that descends uniquely to an $\mathscr{O}_L$-algebra map $R^\square_{\overline\rho} \to A$.
\end{lemma}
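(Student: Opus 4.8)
The plan is to reduce everything to the explicit presentation of $R^\square_{\overline\rho}$ from Corollary \ref{structure}. The first observation is that the presentation $R^\square_{\overline\rho} = \mathscr{O}_L\llbracket X_1,\dots,X_s\rrbracket/(r(\widetilde X_1,\dots,\widetilde X_s)-I)$ produced by Proposition \ref{representable} depends only on the group-theoretic data of $G_K^p$ (Theorem \ref{groupstructure}): the number of generators $s$ and the relation $r$ are determined by $K$ alone — in particular by $q = |\mu_{p^\infty}(K)|$ — and do not involve the coefficient field. Lemma \ref{pro-p} applies verbatim to objects of $\mathsf{Art}_{\mathscr{O}_E}$, so every lift of the trivial $\overline\rho_E$ factors through $G_K^p$, and the proof of Proposition \ref{representable} goes through word for word with $\mathscr{O}_E$ in place of $\mathscr{O}_L$. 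Hence $D^\square_{\overline\rho_E}$ is pro-represented by $\mathscr{O}_E\llbracket X_1,\dots,X_s\rrbracket/(r(\widetilde X_1,\dots,\widetilde X_s)-I)$, with the same $s$ and the same $r$.

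Next I would identify this ring with $R^\square_{\overline\rho}\otimes_{\mathscr{O}_L}\mathscr{O}_E$. Because $\mathscr{O}_E$ is finite free as an $\mathscr{O}_L$-module, the ordinary tensor product $\mathscr{O}_L\llbracket X_1,\dots,X_s\rrbracket\otimes_{\mathscr{O}_L}\mathscr{O}_E$ is a finite free module over the complete local ring $\mathscr{O}_L\llbracket X_1,\dots,X_s\rrbracket$, hence is itself complete and canonically identified with $\mathscr{O}_E\llbracket X_1,\dots,X_s\rrbracket$; and since $\mathscr{O}_L\to\mathscr{O}_E$ is flat, tensoring does not alter the defining ideal, so the quotient base changes to the expected quotient. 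Combining the two steps, $R^\square_{\overline\rho}\otimes_{\mathscr{O}_L}\mathscr{O}_E$ pro-represents $D^\square_{\overline\rho_E}$.

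The final (``in particular'') clause is then just base-change adjunction: a lift of $\overline\rho_E$ to $A\in\mathsf{Art}_{\mathscr{O}_E}$ is, by what we have shown, the same as a local $\mathscr{O}_E$-algebra map $R^\square_{\overline\rho}\otimes_{\mathscr{O}_L}\mathscr{O}_E\to A$ compatible with the reductions to $k_E$, and precomposition with $R^\square_{\overline\rho}\to R^\square_{\overline\rho}\otimes_{\mathscr{O}_L}\mathscr{O}_E$ identifies these with local $\mathscr{O}_L$-algebra maps $R^\square_{\overline\rho}\to A$ (for $A$ an $\mathscr{O}_L$-algebra via $\mathscr{O}_L\to\mathscr{O}_E$) compatible with the reductions; the inverse map sends such an $\mathscr{O}_L$-algebra map to its unique $\mathscr{O}_E$-linear extension.

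I do not anticipate any genuine difficulty; the one point deserving a sentence of care is the commutation of tensor product with $\mathfrak m$-adic completion, which is harmless because $[\mathscr{O}_E:\mathscr{O}_L]$ is finite. Alternatively one could skip the explicit presentation and invoke the general principle that framed deformation rings commute with finite base change of the coefficient ring, but with Corollary \ref{structure} in hand the argument above is shortest.
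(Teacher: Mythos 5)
Your proof is correct and follows essentially the same route as the paper, which likewise deduces the result by comparing the explicit presentations from Proposition \ref{representable} over $\mathscr{O}_L$ and $\mathscr{O}_E$ and then reading off the second clause from the universal property of base change. Your write-up simply supplies the details (finiteness of $\mathscr{O}_E$ over $\mathscr{O}_L$ ensuring completeness of the tensor product) that the paper leaves implicit.
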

\begin{proof}
In this case, one can see the first part by, for example, looking at Proposition \ref{representable} and comparing the deformation rings. The second part is clear from the fact that $R^\square_{\overline\rho} \otimes_{\mathscr{O}_L} \mathscr{O}_E \to A$ is an $\mathscr{O}_E$-algebra map.
\end{proof}

\begin{proof}[Proof of Proposition \ref{localg}]
Given an $F$-algebra map $\widehat{\mathscr{O}}_{\mathcal{G}_{\underline{n}},x} \to B$, we can push forward $\widehat{\rho_x}$ and $\widehat{\mathrm{Fil}}^\bullet_x$ to get a pair $(\rho,\mathrm{Fil}^\bullet) \in D_{\rho_x,\underline{n}}^\square(B)$.

Conversely, suppose we are given $(\rho,\mathrm{Fil}^\bullet) \in D^\square_{\rho_x,\underline{n}}(B)$. The idea is to try to use universality of $R^\square_{\overline\rho}$, but $B$ is an $F$-algebra and not a $\mathscr{O}_F$-algebra, so we cannot immediately reduce mod $\mathfrak{m}_F$. But in fact, we can first show that $\rho$ factors through a finitely generated local $\mathscr{O}_F$-subalgebra $A \subset B$ with a surjective map onto $\mathscr{O}_F$: this is exactly Kisin's argument in \cite[Proposition 9.5]{Kis03}.

If we take the composition $G_K \xrightarrow{\rho} \mathrm{GL}_n(A) \to \mathrm{GL}_n(A/\mathfrak{m}_A)$ we get $\rho_x$. But note $R^\square_{\overline\rho} \xrightarrow{f_x} \mathscr{O}_F$ is a local homomorphism, so commutativity of
\begin{center}\begin{tikzcd}
& \mathrm{GL}_n(A) \dar \\
\mathrm{GL}_n(R^\square_{\overline\rho}) \urar \dar \arrow[swap]{r}{\rho_x} & \mathrm{GL}_n(\mathscr{O}_F) \dar \\
\mathrm{GL}_n(k_L) \rar[hook] & \mathrm{GL}_n(k_F)
\end{tikzcd}\end{center}
shows that $\rho$ reduces to the trivial representation valued in $k_F$. By Lemma \ref{basechange}, $\rho$ is induced by a local $\mathscr{O}_L$-algebra map $a: R^\square_{\overline\rho} \to A$.

Suppose we have a different $a': R^\square_{\overline\rho} \to B$ inducing $\rho$. Then $R^\square_{\overline\rho} \xrightarrow{a'} B \to B/\mathfrak{m}_B = F$ is $f_x$ and thus factors through $\mathscr{O}_F$. The aforementioned argument of Kisin in \cite{Kis03} also implies in this case that $a'$ factors through a finitely generated $\mathscr{O}_F$-subalgebra $A' \subset B$, which we can take large enough so that it contains $A$. Then by universality of $R^\square_{\overline\rho} \otimes_{\mathscr{O}_L} \mathscr{O}_F$, we have $a = a'$.

The map $a: R^\square_{\overline\rho} \to B$ specializes to $f_x$ under the reduction map $B \twoheadrightarrow F$. Similarly $\mathrm{Fil}^\bullet$ specializes to $\mathrm{Fil}^\bullet_x$, so in other words, we have constructed a $B$-point of $\mathcal{G}_{\underline{n}}$ that specializes to $x$. Thus, we get a map
	\[ \mathscr{O}_{\mathcal{G}_{\underline{n}},x} \to B \]
that factors through the completion, since $B$ is complete.
\end{proof}

To compute the dimension of $\widehat{\mathscr{O}}_{\mathcal{G}_{\underline{n}},x}$, we can find another object representing $D_{\rho_x,\underline{n}}^\square$ whose dimension can be computed explicitly. Note $D_{\rho_x,\underline{n}}^\square$ contains data about lifting representations, but also data about lifting filtrations, and we can consider these separately, essentially by picking a basis (which we do by fixing a parabolic determined by a fixed element of $\mathcal{F}_{\underline{n}}$).

Let $P$ denote the parabolic subgroup of $\mathrm{GL}_{n,F}$ corresponding to $\mathrm{Fil}^\bullet_x$, so that $\rho_x$ naturally lands in $P(F)$. Let $\mathfrak{p}$ denote the Lie algebra of $P$, which naturally comes equipped with a $G_K$-action via the adjoint action of $P(F)$ on $\mathfrak{p}$: in other words $\sigma \cdot M = \rho_x(\sigma)M\rho_x(\sigma)^{-1}$.

We define a functor
	\[ D_{\rho_x,P}^\square: \mathsf{Art}_F \to \mathsf{Set}, B \mapsto \{\rho: G_K \xrightarrow{\text{cts}} P(B) : \rho \text{ lifts } \rho_x\} \]
parametrizing $P$-deformations of $\rho_x$ to local Artinian $F$-algebras with residue field $F$.
\begin{proposition}\label{dimr}
The deformation problem $D_{\rho_x,P}^\square$ is pro-represented by a complete local Noetherian $F$-algebra $R_{\rho_x,P}^\square$ such that
	\[ \dim R_{\rho_x,P}^\square \leq (d+1)(\dim \mathfrak{p}) + (n^2 - \dim \mathfrak{p}) \]
\end{proposition}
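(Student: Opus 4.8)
The plan is to present $R^\square_{\rho_x,P}$ explicitly, in the spirit of Propositions \ref{representable} and \ref{dimension}, and then to bound its dimension by combining the bound $\dim R^\square_{\rho_x,P}\le\dim_F\mathfrak{m}/\mathfrak{m}^2$ with the local Euler characteristic formula and a vanishing statement for $H^2$. First I would establish pro-representability. Since $\rho_x$ reduces to the trivial representation modulo $\mathfrak{m}_F$ it factors through $G_K^p$, and the same holds for any lift $\rho\colon G_K\to P(B)$ with $B\in\mathsf{Art}_F$: such a $\rho$ sends $\ker(G_K\twoheadrightarrow G_K^p)$ into $\ker(P(B)\to P(F))$, every compact subgroup of which is pro-$p$ (it is built up from finite-dimensional $F$-vector spaces, whose compact subgroups are $\mathbf{Z}_p$-lattices), while $\ker(G_K\twoheadrightarrow G_K^p)$ has no nontrivial pro-$p$ quotient. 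Using the presentation $G_K^p=\langle g_1,\dots,g_{d+2}\mid r\rangle$ of Theorem \ref{groupstructure}(2) (recall we have assumed $q\ge 3$), a lift is the same datum as a tuple $(A_1,\dots,A_{d+2})\in P(B)^{d+2}$ with $A_j\equiv\rho_x(g_j)$ and $r(A_1,\dots,A_{d+2})=e$. As $P_F:=P\times_{\mathscr{O}_F}F$ is smooth over $F$ of dimension $\dim\mathfrak{p}$, the functor of such tuples ignoring the relation is pro-represented by a power series $F$-algebra in $(d+2)\dim\mathfrak{p}$ variables, and imposing $r=e$ amounts to killing the $\dim\mathfrak{p}$ images of a regular system of parameters of $\widehat{\mathscr{O}}_{P_F,e}$. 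Hence $D^\square_{\rho_x,P}$ is pro-represented by a complete local Noetherian $F$-algebra $R^\square_{\rho_x,P}$, a quotient of a regular ring of dimension $(d+2)\dim\mathfrak{p}$ by $\dim\mathfrak{p}$ elements.

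Next I would reduce the dimension bound to a cohomological one. A standard computation identifies the tangent space $D^\square_{\rho_x,P}(F[\epsilon])$ with the group $Z^1(G_K,\mathfrak{p})$ of continuous cocycles for the adjoint action through $\rho_x$ (there is no passage to $H^1$, since we do not quotient by conjugation). Therefore $\dim_F\mathfrak{m}/\mathfrak{m}^2=\dim_F Z^1(G_K,\mathfrak{p})$ and so $\dim R^\square_{\rho_x,P}\le\dim_F Z^1(G_K,\mathfrak{p})$. From the exact sequence $0\to H^0(G_K,\mathfrak{p})\to\mathfrak{p}\to Z^1(G_K,\mathfrak{p})\to H^1(G_K,\mathfrak{p})\to 0$ and the local Euler characteristic formula $\dim H^0-\dim H^1+\dim H^2=-[K:\mathbf{Q}_p]\dim_F\mathfrak{p}$ one obtains
\[\dim_F Z^1(G_K,\mathfrak{p})=(d+1)\dim\mathfrak{p}+\dim_F H^2(G_K,\mathfrak{p}),\]
so the whole statement is reduced to proving $\dim_F H^2(G_K,\mathfrak{p})\le n^2-\dim\mathfrak{p}$.

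To bound $H^2(G_K,\mathfrak{p})$, let $\mathfrak{n}\subset\mathfrak{p}$ be the nilradical and $\mathfrak{l}=\mathfrak{p}/\mathfrak{n}$ the Levi quotient, both $G_K$-stable. Since $G_K$ has $p$-cohomological dimension $2$, the long exact sequence for $0\to\mathfrak{n}\to\mathfrak{p}\to\mathfrak{l}\to 0$ gives $\dim H^2(G_K,\mathfrak{p})\le\dim H^2(G_K,\mathfrak{n})+\dim H^2(G_K,\mathfrak{l})$. By Tate local duality $\dim H^2(G_K,\mathfrak{n})=\dim H^0(G_K,\mathfrak{n}^\vee\otimes\epsilon)\le\dim_F\mathfrak{n}=n^2-\dim\mathfrak{p}$, the last equality because the nilradical of $\mathfrak{p}$ has the same dimension as that of the opposite parabolic. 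For the other term, Tate duality and self-duality of $\mathfrak{l}$ under the trace form give $\dim H^2(G_K,\mathfrak{l})=\dim H^0(G_K,\mathfrak{l}\otimes\epsilon)=\sum_i\dim\mathrm{Hom}_{G_K}(\sigma_i,\sigma_i\otimes\epsilon)$, where $\sigma_i$ denotes the $G_K$-action on the graded piece $\mathrm{Fil}^i_x/\mathrm{Fil}^{i-1}_x$. This is exactly where the restriction to $\mathcal{G}^{\mathrm{irr}}_{\underline{n}}$ is used: because $x\in\mathcal{G}^{\mathrm{irr}}_{\underline{n}}$, the representation $\rho_{x}$ fixes no proper refinement of $\mathrm{Fil}^\bullet_x$ over $\overline{F}$ (cf. Remark \ref{irr}), which forces every $\sigma_i$ to be absolutely irreducible; then any nonzero map $\sigma_i\to\sigma_i\otimes\epsilon$ is an isomorphism, whence $\det\sigma_i=\epsilon^{n_i}\det\sigma_i$ and $\epsilon^{n_i}=1$, contradicting that $\epsilon$ is infinitely ramified. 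Thus $H^2(G_K,\mathfrak{l})=0$, giving $\dim H^2(G_K,\mathfrak{p})\le n^2-\dim\mathfrak{p}$, and combining with the previous paragraph finishes the proof.

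I expect the main obstacle to be precisely the bound on $H^2(G_K,\mathfrak{p})$: the presentation and the Euler-characteristic bookkeeping are routine, but one has to notice that the desired inequality is \emph{false} for a general $\rho_x\colon G_K\to P(F)$ — a graded piece of the shape $\sigma_i\cong\psi\oplus(\psi\otimes\epsilon)$ would make $H^0(G_K,\mathfrak{l}\otimes\epsilon)$ nonzero and can push $\dim H^2(G_K,\mathfrak{p})$ above $n^2-\dim\mathfrak{p}$ — so the argument must genuinely use the irreducibility of the graded pieces that is built into $\mathcal{G}^{\mathrm{irr}}_{\underline{n}}$, which is the reason for having introduced that open subscheme.
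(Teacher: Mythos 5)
Your proposal is correct and follows essentially the same route as the paper: identify the tangent space with $Z^1(G_K,\mathfrak{p})$, convert to $(d+1)\dim\mathfrak{p}+\dim H^2(G_K,\mathfrak{p})$ via the local Euler characteristic formula, and bound $H^2$ by Tate duality together with the sequence $0\to\mathfrak{n}\to\mathfrak{p}\to\mathfrak{l}\to 0$, using the absolute irreducibility of the graded pieces (forced by $x\in\mathcal{G}^{\mathrm{irr}}_{\underline{n}}$) and the determinant-twist argument to kill the Levi contribution. The only cosmetic differences are that you establish pro-representability by an explicit Demu\v{s}kin presentation rather than citing B\"ockle, and you run the long exact sequence in $H^2$ before dualizing rather than after.
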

\begin{proof}
The existence of $R_{\rho_x,P}^\square$ is standard, and follows from (for example) a slightly modified version of \cite[Proposition 1.3.1]{Boc1}, replacing $\mathrm{GL}_n$ with $P$. The tangent space of $D_{\rho_x,P}^\square$ consists of the $F[\epsilon]/(\epsilon^2)$-points of $D_{\rho_x,P}^\square$. A standard argument shows that any such lift $\rho \in D_{\rho_x,P}^\square(F[\epsilon]/(\epsilon^2))$ can be written uniquely as $\sigma \mapsto (1+c(\sigma)x)\rho_x(\sigma)$ for some continuous $1$-cocycle $c: G_K \to \mathfrak{p}$. Therefore, the tangent space at the closed point has dimension $\dim Z^1(G_K,\mathfrak{p})$, and another standard argument says that $R_{\rho_x,P}^\square$ can be written as a quotient of a power series ring over $F$ in $\dim Z^1(G_K,\mathfrak{p})$ variables. But we have
\begin{align*}
	\dim Z^1(G_K,\mathfrak{p}) &= \dim H^1(G_K,\mathfrak{p}) + \dim B^1(G_K,\mathfrak{p}) \\
	&= \dim H^1(G_K,\mathfrak{p}) + (\dim\mathfrak{p} - \dim\mathfrak{p}^{G_K}) \\
	&= \dim H^1(G_K,\mathfrak{p}) + \dim\mathfrak{p} - \dim H^0(G_K,\mathfrak{p}) \\
	&= (d+1)\dim \mathfrak{p} + \dim H^2(G_K, \mathfrak{p})
\end{align*}
by the local Euler characteristic formula: here $Z^1$ denotes $1$-cocycles and $B^1$ denotes $1$-coboundaries.

It now suffices to bound $\dim H^2(G_K,\mathfrak{p})$, which by local Tate duality is $\dim H^0(G_K,\mathfrak{p}^\vee \otimes \epsilon)$. If $\mathfrak{n}_P$ is the Lie algebra of the unipotent radical of $P$, then we have short exact sequence of $F[G_K]$-modules
	\[ 0 \to \mathfrak{n}_P \to \mathfrak{p} \to \mathfrak{l}_P \to 0, \]
where $\mathfrak{l}_P$ is the Levi quotient. Dualizing, twisting by the cyclotomic character $\epsilon$, and taking the associated $G_K$-cohomology long exact sequence, we get
	\[ 0 \to H^0(G_K,\mathfrak{l}_P^\vee \otimes \epsilon) \to H^0(G_K,\mathfrak{p}^\vee \otimes \epsilon) \to H^0(G_K,\mathfrak{n}_P^\vee \otimes \epsilon) \to \cdots. \]
Note $\dim H^0(G_K,\mathfrak{n}_P^\vee \otimes \epsilon) \leq \dim \mathfrak{n}_P = n^2 - \dim\mathfrak{p}$, so we are done if we can show that
	\[ H^0(G_K,\mathfrak{l}_P^\vee \otimes \epsilon) = 0. \]
Since $x \in \mathcal{G}_{\underline{n}}^{\mathrm{irr}}(F)$, we can find a basis respecting $\mathrm{Fil}^\bullet_x$ in which
	\[ \rho_x \cong \begin{pmatrix}\alpha_1 & * & * \\ & \ddots & * \\ && \alpha_m \end{pmatrix} \]
for some (absolutely) irreducible representations $\alpha_i$ of dimension $n_i$. One can compute that $\mathfrak{l}_P$ is isomorphic, as a $G_K$-representation, to $\bigoplus_{i=1}^m \mathfrak{g}\mathfrak{l}_{n_i}$, where each $\mathfrak{g}\mathfrak{l}_{n_i}$ is the Lie algebra of $\mathrm{GL}_{n_i}(F)$ equipped with the $G_K$-action induced by $G_K \xrightarrow{\alpha_i} \mathrm{GL}_{n_i}(F) \xrightarrow{\ad} \mathrm{GL}(\mathfrak{g}\mathfrak{l}_{n_i})$. Thus we have an isomorphism of $G_K$-representations
	\[ \mathfrak{l}_P^\vee \otimes \epsilon \cong \bigoplus_{i=1}^m (\mathfrak{g}\mathfrak{l}_{n_i}^\vee \otimes \epsilon), \]
and we conclude that
	\[ H^0(G_K,\mathfrak{l}_P^\vee \otimes \epsilon) = \bigoplus_{i=1}^m H^0(G_K,\mathfrak{g}\mathfrak{l}_{n_i}^\vee \otimes \epsilon) = \bigoplus_{i=1}^m \mathrm{Hom}_{F[G_K]}(\alpha_i,\alpha_i \otimes \epsilon) = 0, \]
where the last equality follows from the fact that the $\alpha_i$ are irreducible and $\alpha_i \not \cong \alpha_i \otimes \epsilon$ (e.g. they have nonisomorphic determinant).
\end{proof}

Now define a functor
	\[ D_{\mathrm{Fil}^\bullet_x}: \mathsf{Art}_F \to \mathsf{Set}, B \mapsto \{\text{lifts of } \mathrm{Fil}^\bullet_x \text{ in } B^n\} \]
This is represented by the completed local ring $\widehat{\mathscr{O}}_{\mathcal{F}_{\underline{n}},\mathrm{Fil}^\bullet_x}$, which is isomorphic to a power series ring over $F$ in $n^2-\dim\mathfrak{p}$ variables (the flag variety is smooth and isomorphic to $\mathrm{GL}_{n,F}/P$). Let $\widehat{\mathrm{Fil}}^\bullet_x \in D_{\mathrm{Fil}_x^\bullet}(\widehat{\mathscr{O}}_{\mathcal{F}_{\underline{n}},\mathrm{Fil}^\bullet_x})$ denote the universal filtration for this deformation problem. Each $\widehat{\mathrm{Fil}}^i_x$ is free since $\widehat{\mathscr{O}}_{\mathcal{F}_{\underline{n}},\mathrm{Fil}^\bullet_x}$ is local, and moreover there exists some $\varphi \in \mathrm{GL}_n(\widehat{\mathscr{O}}_{\mathcal{F}_{\underline{n}},\mathrm{Fil}^\bullet_x})$ such that
	\[ \widehat{\mathrm{Fil}}^\bullet_x = \varphi(\mathrm{Fil}^\bullet_x \otimes_F \widehat{\mathscr{O}}_{\mathcal{F}_{\underline{n}},\mathrm{Fil}^\bullet_x}), \]
and such that $\varphi$ reduces to the identity map mod the maximal ideal of $\widehat{\mathscr{O}}_{\mathcal{F}_{\underline{n}},\mathrm{Fil}^\bullet_x}$.

\begin{proposition}
There is an isomorphism of functors
	\[ D_{\rho_x,\underline{n}}^\square = D_{\rho_x,P}^\square \times D_{\mathrm{Fil}^\bullet_x}. \]
\end{proposition}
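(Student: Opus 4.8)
\emph{Overview of the plan.} The idea is to use the matrix $\varphi \in \mathrm{GL}_n(\widehat{\mathscr{O}}_{\mathcal{F}_{\underline{n}},\mathrm{Fil}^\bullet_x})$ introduced just above, which conjugates the constant filtration to the universal one and reduces to the identity, to "straighten out" the filtration and thereby split the data in $D^\square_{\rho_x,\underline{n}}$ into a $P$-valued deformation of $\rho_x$ and a deformation of $\mathrm{Fil}^\bullet_x$. First I would record the following reformulation of $D_{\mathrm{Fil}^\bullet_x}$: since $\widehat{\mathscr{O}}_{\mathcal{F}_{\underline{n}},\mathrm{Fil}^\bullet_x}$ pro-represents $D_{\mathrm{Fil}^\bullet_x}$, any $\mathrm{Fil}^\bullet \in D_{\mathrm{Fil}^\bullet_x}(B)$ corresponds to a unique local $F$-algebra map $g_{\mathrm{Fil}^\bullet}\colon \widehat{\mathscr{O}}_{\mathcal{F}_{\underline{n}},\mathrm{Fil}^\bullet_x} \to B$ with $(g_{\mathrm{Fil}^\bullet})_\ast \widehat{\mathrm{Fil}}^\bullet_x = \mathrm{Fil}^\bullet$. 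Setting $\varphi_B := (g_{\mathrm{Fil}^\bullet})_\ast \varphi$, locality of $g_{\mathrm{Fil}^\bullet}$ forces $\varphi_B \in 1 + \mathrm{Mat}_n(\mathfrak{m}_B) \subset \mathrm{GL}_n(B)$, we get $\mathrm{Fil}^\bullet = \varphi_B(\mathrm{Fil}^\bullet_x \otimes_F B)$, and $\varphi_B$ is natural in $B$ by uniqueness of $g_{\mathrm{Fil}^\bullet}$.

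\emph{The two maps.} Then I would define $D^\square_{\rho_x,\underline{n}} \to D^\square_{\rho_x,P} \times D_{\mathrm{Fil}^\bullet_x}$ on $B$-points by
\[ (\rho,\mathrm{Fil}^\bullet) \longmapsto \bigl(\varphi_B^{-1}\,\rho\,\varphi_B,\ \mathrm{Fil}^\bullet\bigr). \]
Here $\varphi_B^{-1}\rho\varphi_B$ is continuous, reduces to $\rho_x$ because $\varphi_B \equiv 1 \pmod{\mathfrak{m}_B}$, and takes values in $P(B)$: indeed $\rho$ stabilizes $\mathrm{Fil}^\bullet = \varphi_B(\mathrm{Fil}^\bullet_x \otimes_F B)$, so $\varphi_B^{-1}\rho\varphi_B$ stabilizes $\mathrm{Fil}^\bullet_x \otimes_F B$, which is exactly the condition to land in $P(B)$ (the stabilizer of $\mathrm{Fil}^\bullet_x \otimes_F B$ in $\mathrm{GL}_n(B)$). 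Conversely I would define $D^\square_{\rho_x,P} \times D_{\mathrm{Fil}^\bullet_x} \to D^\square_{\rho_x,\underline{n}}$ by $(\rho',\mathrm{Fil}^\bullet) \mapsto (\varphi_B \rho' \varphi_B^{-1},\mathrm{Fil}^\bullet)$, using the $\varphi_B$ attached to the $\mathrm{Fil}^\bullet$-component; since $\rho'$ stabilizes $\mathrm{Fil}^\bullet_x \otimes_F B$, the conjugate $\varphi_B \rho' \varphi_B^{-1}$ stabilizes $\varphi_B(\mathrm{Fil}^\bullet_x \otimes_F B) = \mathrm{Fil}^\bullet$, and it lifts $\rho_x$ as before. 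The two maps are manifestly mutually inverse, and both are natural in $B$ precisely because $\varphi_B$ is.

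\emph{Where the care is needed.} The only subtlety is that $\varphi$ is not unique (it may be altered by right multiplication by an element of $P(\widehat{\mathscr{O}}_{\mathcal{F}_{\underline{n}},\mathrm{Fil}^\bullet_x})$ reducing to the identity), so the resulting isomorphism is non-canonical; this is harmless, as we simply fix one choice of $\varphi$ once and for all. The actual content is the elementary observation that conjugation by $\varphi_B$ converts the relation "$\rho$ stabilizes $\mathrm{Fil}^\bullet$" into "$\rho$ takes values in $P$", which is immediate from $\mathrm{Fil}^\bullet = \varphi_B(\mathrm{Fil}^\bullet_x \otimes_F B)$ and the description of $P$ as a flag stabilizer; everything else is bookkeeping. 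I therefore do not anticipate any genuine obstacle: the proposition just says that after trivializing the flag by the universal conjugation, a $G_K$-stable lift of the flag is the same thing as a lift of the flag together with a $P$-valued lift of $\rho_x$.
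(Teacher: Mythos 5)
Your proposal is correct and follows essentially the same route as the paper: push forward the fixed universal $\varphi$ along the classifying map of $\mathrm{Fil}^\bullet$ to get $\varphi_B \equiv 1 \bmod \mathfrak{m}_B$ with $\mathrm{Fil}^\bullet = \varphi_B(\mathrm{Fil}^\bullet_x \otimes_F B)$, then conjugate by $\varphi_B$ to exchange ``stabilizes $\mathrm{Fil}^\bullet$'' with ``valued in $P$,'' with the inverse given by conjugating back. Your added remarks on naturality and the non-uniqueness of $\varphi$ are sound but not needed beyond what the paper records.
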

\begin{proof}
Fix a point $(\rho,\mathrm{Fil}^\bullet) \in D_{\rho_x,\underline{n}}^\square(A)$ lifting $(\rho_x,\mathrm{Fil}^\bullet_x)$ so that $\rho$ fixes $\mathrm{Fil}^\bullet$. The filtration $\mathrm{Fil}^\bullet$ is induced by a map $\widehat{\mathscr{O}}_{\mathcal{F}_{\underline{n}},\mathrm{Fil}^\bullet_x} \to A$, and if we push forward $\varphi$ along this map then we get some $\varphi_A \in \mathrm{GL}_n(A)$ such that $\mathrm{Fil}^\bullet = \varphi_A(\mathrm{Fil}^\bullet_x \otimes_F A)$, and such that $\varphi_A$ reduces to $1$ mod $\mathfrak{m}_A$. Therefore, $\varphi_A^{-1}\rho\varphi_A$ fixes $\mathrm{Fil}^\bullet_x$ and lands in $P(A)$. Then the map $D^\square_{\overline\rho_x,\underline{n}} \to D^\square_{\overline\rho_x,P} \times D_{\mathrm{Fil}^\bullet_x}$ given by $(\rho,\mathrm{Fil}^\bullet) \mapsto (\varphi_A^{-1}\rho\varphi_A,\mathrm{Fil}^\bullet)$ is a functorial bijection, with inverse $(\rho,\mathrm{Fil}^\bullet) \mapsto (\varphi_A\rho\varphi_A^{-1},\mathrm{Fil}^\bullet)$.
\end{proof}

\begin{corollary}\label{dimg}
The ring $\widehat{\mathscr{O}}_{\mathcal{G}_{\underline{n}},x}$ is isomorphic to a power series ring over $R_{\rho_x,P}^\square$
in $n^2 - \dim \mathfrak{p}$ variables.
\end{corollary}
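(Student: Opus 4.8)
The plan is to read this off from the preceding proposition together with the pro-representability results already in hand. Proposition \ref{localg} identifies $\widehat{\mathscr{O}}_{\mathcal{G}_{\underline{n}},x}$ as the object pro-representing $D_{\rho_x,\underline{n}}^\square$; Proposition \ref{dimr} says $R_{\rho_x,P}^\square$ pro-represents $D_{\rho_x,P}^\square$; and by construction $\widehat{\mathscr{O}}_{\mathcal{F}_{\underline{n}},\mathrm{Fil}^\bullet_x}$ pro-represents $D_{\mathrm{Fil}^\bullet_x}$ and is a power series ring over $F$ in $m := n^2 - \dim\mathfrak{p}$ variables. So it suffices to feed the functorial identification $D_{\rho_x,\underline{n}}^\square \cong D_{\rho_x,P}^\square \times D_{\mathrm{Fil}^\bullet_x}$ into the general fact that a product of pro-representable deformation functors is pro-represented by the completed tensor product of the representing rings.

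Concretely, I would argue as follows. For $A \in \mathsf{Art}_F$, a pair of local $F$-algebra maps $R_{\rho_x,P}^\square \to A$ and $\widehat{\mathscr{O}}_{\mathcal{F}_{\underline{n}},\mathrm{Fil}^\bullet_x} \to A$ is the same datum as a single local $F$-algebra map out of $R_{\rho_x,P}^\square \otimes_F \widehat{\mathscr{O}}_{\mathcal{F}_{\underline{n}},\mathrm{Fil}^\bullet_x}$, and since $A$ is Artinian this factors uniquely through the $\mathfrak{m}$-adic completion. Hence $D_{\rho_x,P}^\square \times D_{\mathrm{Fil}^\bullet_x}$ is pro-represented by $R_{\rho_x,P}^\square \,\widehat{\otimes}_F\, \widehat{\mathscr{O}}_{\mathcal{F}_{\underline{n}},\mathrm{Fil}^\bullet_x}$, which is again complete local Noetherian with residue field $F$. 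Combining with the isomorphism of functors and the fact that $\widehat{\mathscr{O}}_{\mathcal{F}_{\underline{n}},\mathrm{Fil}^\bullet_x} \cong F\llbracket x_1,\dots,x_m\rrbracket$, we obtain
\[ \widehat{\mathscr{O}}_{\mathcal{G}_{\underline{n}},x} \;\cong\; R_{\rho_x,P}^\square \,\widehat{\otimes}_F\, F\llbracket x_1,\dots,x_m\rrbracket \;\cong\; R_{\rho_x,P}^\square\llbracket x_1,\dots,x_m\rrbracket, \]
the last step being the standard identity $R \,\widehat{\otimes}_F\, F\llbracket x_1,\dots,x_m\rrbracket \cong R\llbracket x_1,\dots,x_m\rrbracket$ for $R$ complete local Noetherian.

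Alternatively — and this avoids even naming the completed tensor product — the two projections of $D_{\rho_x,\underline{n}}^\square$ onto its factors, applied to the universal object over $\widehat{\mathscr{O}}_{\mathcal{G}_{\underline{n}},x}$, supply local $F$-algebra maps $R_{\rho_x,P}^\square \to \widehat{\mathscr{O}}_{\mathcal{G}_{\underline{n}},x}$ and $F\llbracket x_1,\dots,x_m\rrbracket \cong \widehat{\mathscr{O}}_{\mathcal{F}_{\underline{n}},\mathrm{Fil}^\bullet_x} \to \widehat{\mathscr{O}}_{\mathcal{G}_{\underline{n}},x}$, hence a local $F$-algebra map $\Phi\colon R_{\rho_x,P}^\square\llbracket x_1,\dots,x_m\rrbracket \to \widehat{\mathscr{O}}_{\mathcal{G}_{\underline{n}},x}$; since source and target pro-represent isomorphic functors, Yoneda forces $\Phi$ to be an isomorphism. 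I do not expect any real obstacle here: the only thing to keep an eye on is that all three functors are genuinely pro-representable (not merely versal) and that the product decomposition is a decomposition of functors on all of $\mathsf{Art}_F$, both of which are exactly what the cited propositions provide.
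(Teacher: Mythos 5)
Your argument is correct and is exactly the route the paper intends: the corollary is stated without proof precisely because it follows from combining Proposition \ref{localg}, Proposition \ref{dimr}, the smoothness of the flag variety (so $\widehat{\mathscr{O}}_{\mathcal{F}_{\underline{n}},\mathrm{Fil}^\bullet_x} \cong F\llbracket x_1,\dots,x_{n^2-\dim\mathfrak{p}}\rrbracket$), and the product decomposition of functors, via the standard fact that a product of pro-representable functors is pro-represented by the completed tensor product. Your write-up just makes these routine steps explicit.
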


Now we can simply compute. Recall that we assumed $x \in \mathcal{G}_{\underline{n}}^{\mathrm{irr}}$.

\begin{proposition}
The ring $\widehat{\mathscr{O}}_{\mathcal{G}_{\underline{n}},x}$ satisfies
	\[ \dim \widehat{\mathscr{O}}_{\mathcal{G}_{\underline{n}},x} \leq (d+1)(\dim\mathfrak{p}) + 2(n^2-\dim\mathfrak{p}). \]
\end{proposition}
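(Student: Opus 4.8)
The plan is to simply combine the two structural results just established: Corollary \ref{dimg}, which identifies $\widehat{\mathscr{O}}_{\mathcal{G}_{\underline{n}},x}$ with a power series ring in $n^2 - \dim\mathfrak{p}$ variables over $R_{\rho_x,P}^\square$, and Proposition \ref{dimr}, which bounds $\dim R_{\rho_x,P}^\square$ by $(d+1)(\dim\mathfrak{p}) + (n^2 - \dim\mathfrak{p})$.

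Concretely, I would first recall that for a Noetherian local ring $R$, the Krull dimension of the power series ring $R\llbracket t_1,\dots,t_m\rrbracket$ is $\dim R + m$ (this is standard, e.g. because completing is faithfully flat with regular $m$-dimensional fibers, or by an induction on $m$ reducing to the one-variable case). Applying this with $R = R_{\rho_x,P}^\square$ and $m = n^2 - \dim\mathfrak{p}$ gives
\[ \dim \widehat{\mathscr{O}}_{\mathcal{G}_{\underline{n}},x} = \dim R_{\rho_x,P}^\square + (n^2 - \dim\mathfrak{p}). \]
Then I would feed in the bound from Proposition \ref{dimr} to get
\[ \dim \widehat{\mathscr{O}}_{\mathcal{G}_{\underline{n}},x} \leq (d+1)(\dim\mathfrak{p}) + (n^2 - \dim\mathfrak{p}) + (n^2 - \dim\mathfrak{p}) = (d+1)(\dim\mathfrak{p}) + 2(n^2 - \dim\mathfrak{p}), \]
which is exactly the claimed inequality.

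There is essentially no obstacle here: the statement is a bookkeeping consequence of the preceding propositions, and the only thing to be careful about is that the hypothesis $x \in \mathcal{G}_{\underline{n}}^{\mathrm{irr}}$ (rather than merely $x \in \mathcal{G}_{\underline{n}}$) is what licenses the use of Proposition \ref{dimr} — its proof crucially used that $\rho_x$ is block-upper-triangular with \emph{irreducible} diagonal blocks $\alpha_i$, so that $H^0(G_K,\mathfrak{l}_P^\vee \otimes \epsilon) = 0$. So I would explicitly note that $x$ lies in $\mathcal{G}_{\underline{n}}^{\mathrm{irr}}$ before invoking that proposition, and otherwise the argument is a two-line chain of inequalities.
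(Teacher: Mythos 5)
Your proposal is correct and is exactly the paper's argument: the proof there is the one-line observation that the bound follows from Proposition \ref{dimr} and Corollary \ref{dimg}, with the standing assumption $x \in \mathcal{G}_{\underline{n}}^{\mathrm{irr}}$ recalled just beforehand. Your added care about why the power series ring adds $n^2 - \dim\mathfrak{p}$ to the dimension and why the $\mathrm{irr}$ hypothesis is needed is accurate but not a departure from the paper's route.
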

\begin{proof}
This follows from Proposition \ref{dimr} and Corollary \ref{dimg}.
\end{proof}

The upshot is the following theorem.

\begin{theorem}\label{normal}
Assume $q > 2$. Then $X^\square_{\overline\rho}$ is regular in codimension $1$. In particular, since it is Cohen-Macaulay, $X^\square_{\overline\rho}$ is normal (by Serre's criterion for normality).
\end{theorem}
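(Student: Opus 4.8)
The plan is to apply Serre's criterion $(R_1)+(S_2)$. Since $R^\square_{\overline\rho}$ is Cohen--Macaulay by Proposition \ref{dimension}, so is its localization at $\varpi_L$, hence $X^\square_{\overline\rho}$ satisfies $(S_2)$ (indeed all of the Serre conditions $(S_k)$), and we are reduced to checking $(R_1)$: the singular locus $X^{\square,\mathrm{sing}}_{\overline\rho}$ has codimension at least $2$ in $X^\square_{\overline\rho}$. By Proposition \ref{dimension} the ring $R^\square_{\overline\rho}$ is $\mathscr{O}_L$-flat of dimension $1+n^2(d+1)$, so $X^\square_{\overline\rho} = \Spec R^\square_{\overline\rho}[1/\varpi_L]$ is equidimensional of dimension $n^2(d+1)$; thus it suffices to show
\[ \dim X^{\square,\mathrm{sing}}_{\overline\rho} \leq n^2(d+1) - 2. \]

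By the chain of inequalities established just before the statement, $\dim X^{\square,\mathrm{sing}}_{\overline\rho} \leq \max_{\underline{n} \in P(n)^\circ} \dim \mathcal{G}_{\underline{n}}^{\mathrm{irr}}[1/\varpi_L]$, and by the computation of $\dim\widehat{\mathscr{O}}_{\mathcal{G}_{\underline{n}},x}$ above, for a closed point $x$ of $\mathcal{G}_{\underline{n}}^{\mathrm{irr}}[1/\varpi_L]$ (with parabolic $P$ of type $\underline{n}$) we have $\dim_x \mathcal{G}_{\underline{n}}^{\mathrm{irr}}[1/\varpi_L] \leq (d+1)\dim\mathfrak{p} + 2(n^2 - \dim\mathfrak{p})$. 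So it remains to compare this with $n^2(d+1)$: writing $c = n^2 - \dim\mathfrak{p} = \dim\mathfrak{n}_P > 0$ (strict since $\underline{n} \neq (n)$), the bound reads $(d+1)(n^2 - c) + 2c = n^2(d+1) - (d+1)c + 2c = n^2(d+1) - (d-1)c$. Since $d = [K:\mathbf{Q}_p] \geq 1$ and $c \geq 1$, this is at most $n^2(d+1)$, with a deficit of $(d-1)c$. The worry is that when $d = 1$ this deficit is zero, which is not good enough; in that case one needs to extract an extra $2$ from somewhere. The main obstacle is thus handling $K = \mathbf{Q}_p$: I expect one must sharpen the estimate in Proposition \ref{dimr} — for instance by noting that $\dim H^0(G_K, \mathfrak{n}_P^\vee \otimes \epsilon)$ is strictly smaller than $\dim\mathfrak{n}_P$ (the whole unipotent radical cannot be $\epsilon$-isotypic for the diagonal $G_K$-action coming from distinct irreducible blocks, and for the trivial $\overline\rho$ there is further rigidity), or by observing that the codimension-$1$ strata actually force $m = 2$ with both blocks one-dimensional, where a direct computation wins — so that the true bound gains the missing units. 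For $d \geq 2$ the generic case $c = 1$ already gives deficit $d - 1 \geq 1$; one still needs deficit $\geq 2$, so even here the refinement $\dim H^0(G_K,\mathfrak{n}_P^\vee\otimes\epsilon) \leq \dim\mathfrak{n}_P - 1$ (which should hold because $\mathfrak{n}_P$ contains a subquotient isomorphic to $\mathrm{Hom}(\alpha_j,\alpha_i)$ with $i<j$, and $\alpha_i\not\cong\alpha_j\otimes\epsilon$ for at least one such pair when $\overline\rho$ is trivial, using $p>n$ or $q=1$) is what pushes the estimate over the line.

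Concretely, the steps I would carry out are: (1) record that $X^\square_{\overline\rho}$ is Cohen--Macaulay and equidimensional of dimension $n^2(d+1)$, so $(S_2)$ holds and $(R_1)$ is the only thing to check; (2) invoke the already-constructed containment $X^{\square,\mathrm{sing}}_{\overline\rho} \subseteq X^{\square,\mathrm{red}}_{\overline\rho}$ and the bound by $\max_{\underline n \in P(n)^\circ}\dim\mathcal{G}_{\underline n}^{\mathrm{irr}}[1/\varpi_L]$; (3) for each $\underline n$ and each closed point $x \in \mathcal{G}_{\underline n}^{\mathrm{irr}}[1/\varpi_L]$ with associated parabolic $P$, use $\dim\widehat{\mathscr{O}}_{\mathcal{G}_{\underline n},x} \leq (d+1)\dim\mathfrak p + 2\dim\mathfrak n_P = n^2(d+1) - (d-1)\dim\mathfrak n_P - 2\,\delta$, where $\delta := \dim\mathfrak n_P - \dim H^0(G_K,\mathfrak n_P^\vee\otimes\epsilon) \geq 0$ is the refinement from revisiting the long exact sequence in the proof of Proposition \ref{dimr}; (4) verify that $(d-1)\dim\mathfrak n_P + 2\delta \geq 2$ in all cases, which amounts to a small case analysis on $(d,\underline n)$, the tight case being $d=1$ and $\underline n = (1,1)$ (resp. the two-block case), handled by showing $\delta \geq 1$ there from $\alpha_1 \not\cong \alpha_2 \otimes \epsilon$ (valid since for trivial $\overline\rho$ the blocks reduce to the trivial character, and $\epsilon \not\equiv 1 \bmod \mathfrak{m}$ is forced — or, if necessary, falling back on the hypotheses $p>n$ or $q=1$); (5) conclude $\dim X^{\square,\mathrm{sing}}_{\overline\rho} \leq n^2(d+1) - 2$, so $(R_1)$ holds and Serre's criterion gives normality of $X^\square_{\overline\rho}$. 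The only genuinely delicate point is step (4)/(3): everything else is bookkeeping with the results already in hand.
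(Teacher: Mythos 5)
Your overall strategy coincides with the paper's: Serre's criterion with $(S_2)$ from Cohen--Macaulayness, $(R_1)$ from the bound $\dim X^{\square,\mathrm{sing}}_{\overline\rho} \leq \max_{\underline{n}} \dim\mathcal{G}^{\mathrm{irr}}_{\underline{n}}[1/\varpi_L]$, and the codimension deficit $(d-1)\dim\mathfrak{n}_P$. Two remarks before the main issue. First, the case $d=1$ you worry about is vacuous: the hypothesis $q>2$ forces $K \supseteq \mathbf{Q}_p(\mu_q)$ with $\varphi(q)\geq 2$, so $d\geq 2$ always. Second, your case analysis correctly isolates the only genuinely tight case, namely $d=2$ and $\dim\mathfrak{n}_P=1$, which forces $n=2$, $P$ a Borel, and $K = \mathbf{Q}_2(\mu_4)$ or $\mathbf{Q}_3(\mu_3)$.

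The genuine gap is in your proposed resolution of that tight case, steps (3)--(4). The claimed refinement $\dim H^0(G_K,\mathfrak{n}_P^\vee\otimes\epsilon)\leq \dim\mathfrak{n}_P - 1$ does \emph{not} hold at every closed point of $\mathcal{G}^{\mathrm{irr}}_{(1,1)}[1/\varpi_L]$. At such a point $\rho_f$ is an extension of a character $\alpha_2$ by a character $\alpha_1$, the line $\mathfrak{n}_B$ carries the $G_K$-action $\alpha_1\alpha_2^{-1}$, and $H^0(G_K,\mathfrak{n}_B^\vee\otimes\epsilon)\neq 0$ exactly when $\alpha_1=\alpha_2\otimes\epsilon$. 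In the critical cases $K=\mathbf{Q}_2(\mu_4)$ and $K=\mathbf{Q}_3(\mu_3)$ the mod-$p$ cyclotomic character is trivial (since $\mu_p\subset K$), so $\epsilon\equiv 1 \bmod \mathfrak{m}$ and such extensions do lift the trivial $\overline\rho$; your assertion that ``$\epsilon\not\equiv 1\bmod\mathfrak{m}$ is forced'' is false precisely where you need it, and neither $p>n$ nor $q=1$ rescues it. The paper's fix is different: it does not improve the bound uniformly over $\mathcal{G}^{\mathrm{irr}}_{(1,1)}$, but instead uses the sharper structure of \emph{singular} points (via \cite[Proposition 4.2]{CDP}, a singular closed point sits in an exact sequence $0\to\delta\to\rho_x\to\delta\otimes\epsilon\to 0$) to replace $\mathcal{G}^{\mathrm{irr}}_{(1,1)}$ by the smaller closed locus $\mathcal{G}^{\epsilon}_{(1,1)}$ where $\rho_f|_{\mathrm{Fil}^1}=\rho_f|_{\mathrm{Fil}^2/\mathrm{Fil}^1}\otimes\epsilon$; on that locus $\mathfrak{n}_B^\vee\otimes\epsilon$ is the character $\epsilon^2$, whose $H^0$ vanishes, so $H^2(G_K,\mathfrak{b})=0$ and the bound drops to $10 = \dim X^\square_{\overline\rho} - 2$. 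To repair your argument you would need to incorporate this extra constraint on the singular locus rather than on all of $\mathcal{G}^{\mathrm{irr}}_{(1,1)}$.
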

\begin{proof}
We have shown that
	\[ \dim X^{\square,\mathrm{sing}}_{\overline\rho} \leq \max_{\underline{n} \in P(n)^\circ} \dim \mathcal{G}_{\underline{n}}^{\mathrm{irr}} \leq \max_{\underline{n} \in P(n)^\circ} (d+1)(\dim\mathfrak{p}_{\underline{n}}) + 2(n^2 - \dim \mathfrak{p}_{\underline{n}}). \]
where $\mathfrak{p}_{\underline{n}}$ is the Lie algebra of a parabolic $P_{\underline{n}}$ of shape determined by $\underline{n}$. But note that $\dim X^\square_{\overline\rho} = n^2(d+1)$ by Proposition \ref{dimension}, so the singular locus has codimension
	\[ \dim X^\square_{\overline\rho} - \dim X^{\square,\mathrm{sing}}_{\overline\rho} \geq \min_{\underline{n} \in P(n)^\circ} (d-1)(n^2 - \dim\mathfrak{p}_{\underline{n}}) \]
which is strictly bigger than 1 whenever $d > 2$. If our bound is exactly 1, then we must have $d=2$. This means that either $p = 2$ and $K = \mathbf{Q}_2(\mu_4)$, or $p = 3$ and $K = \mathbf{Q}_3(\mu_3)$. In either case we must have $n = 2$.

One can still prove the theorem in these cases, but it requires a slight modification, so we say a few words about how to do this. By \cite[Proposition 4.2]{CDP}, if $x \in X^\square_{\overline\rho}(F)$ is a singular closed point, there is an exact sequence $0 \to \delta \to \rho_x \to \delta \otimes \epsilon \to 0$. In other words, $\rho_x$ fixes some full flag $0 \subset \mathrm{Fil}^1 \subset \mathrm{Fil}^2 = F^2$, and $\rho_x|_{\mathrm{Fil}^1} \otimes \epsilon = \rho_x|_{\mathrm{Fil}^2/\mathrm{Fil}^1}$. So we can leverage this to get a sharper bound on $\dim H^2(G_K,\mathfrak{p})$ in Proposition \ref{dimr}.

First of all, proper parabolics are Borels, so
	\[ \bigsqcup_{\underline{n} \in P(2)^\circ} \mathcal{G}_{\underline{n}}^{\mathrm{irr}} = \mathcal{G}_{(1,1)}. \]
We can define a closed subspace $\mathcal{G}_{(1,1)}^\epsilon \subset \mathcal{G}_{(1,1)}$.
	\[ \mathcal{G}_{(1,1)}^\epsilon(A) = \{(f,\mathrm{Fil}^\bullet) \in \mathcal{G}_{(1,1)} : \rho_f|_{\mathrm{Fil}^1} = \rho_f|_{\mathrm{Fil}^2/\mathrm{Fil}^1} \otimes \epsilon\}. \]
By the discussion above, the scheme theoretic image of $\mathcal{G}_{(1,1)}^\epsilon[1/\varpi_L] \to X^\square_{\overline\rho}$ still contains $X^{\square,\mathrm{sing}}_{\overline\rho}$. Then if $B$ is the Borel corresponding to $\mathrm{Fil}^\bullet$, one can do a direct matrix computation and show that $H^0(G_K,\mathfrak{n}_B^\vee \otimes \epsilon) = 0$, which implies that $H^2(G_K,\mathfrak{b}) = 0$ where $\mathfrak{b}$ is the Lie algebra of $B$. Thus $\dim X^\square_{\overline\rho} = 12$, but the singular locus is at most $10$-dimensional.
\end{proof}

\section{Irreducible Components}

The goal of this section is to prove that the irreducible components of $X^\square_{\overline\rho}$ are exactly parametrized by the $q$th roots of unity.

To see why this might be true, notice that the equation
\begin{equation}\label{defining} \widetilde{X}_1^q[\widetilde{X}_1,\widetilde{X}_2] \cdots [\widetilde{X}_{d+1},\widetilde{X}_{d+2}] = I \end{equation}
implies that $\det(\widetilde{X}_1)^q = 1$ in $R^\square_{\overline\rho}[1/\varpi_L]$ and thus induces a map
	\[ \pi: X^\square_{\overline\rho} \to \mu_{q,L} := \Spec L[x]/(x^q-1). \]
Recall $|\mu_{p^\infty}(L)| \geq q$, so $\mu_{q,L}$ is just the disjoint union of $q$ copies of $\Spec L$, one for each $q$th root of unity in $L$. Since the image of any connected component of $X_{\overline\rho}^\square$ is connected, it must be sent to one of these points. Thus, for each point $\zeta \in \mu_{q,L}$, $X_\zeta := X_{\overline\rho}^\square \times_{\mu_{q,L}, \zeta} \Spec L$ is a union of connected components of $X^\square_{\overline\rho}$. In fact, we will show that Equation (\ref{defining}) provides enough leverage to connect every pair of points in $X_\zeta$, and thus to conclude that $X_\zeta$ is connected. Thus, the goal is to prove:

\begin{theorem}\label{maintheorem}
Under the assumptions of Theorem \ref{secondref}, the space $X^\square_{\overline\rho}$ breaks into the union of $q$ connected (and therefore irreducible, since $X^\square_{\overline\rho}$ is normal) components
	\[ X^\square_{\overline\rho} = \bigsqcup_{\zeta\in\mu_q(L)} X_\zeta. \]
\end{theorem}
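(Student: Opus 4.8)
The plan is to show that each fiber $X_\zeta$ is connected, since Theorem \ref{normal} already guarantees that the connected components of $X^\square_{\overline\rho}$ are irreducible, and the map $\pi$ already shows that $X^\square_{\overline\rho}$ decomposes as the disjoint union of the $X_\zeta$ over $\zeta \in \mu_q(L)$. So everything reduces to proving that for each $q$th root of unity $\zeta$, the closed points of $X_\zeta$ all lie in one connected component. Because $X_\zeta$ is a Jacobson scheme of finite type over $L$, it suffices to connect every pair of closed points by a chain of connected subvarieties (e.g. images of maps from connected affine or projective varieties), and in fact it will be cleanest to argue rigid-analytically, exhibiting explicit path-connecting families of matrix tuples satisfying Equation (\ref{defining}).

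First I would reduce to a ``normal form'' for points of $X_\zeta$. Given a closed point corresponding to a tuple $(\widetilde{X}_1, \dots, \widetilde{X}_{d+2})$ over a finite extension $F/L$ satisfying the Demushkin relation, the strategy is to deform, within $X_\zeta$, to a point where $\widetilde{X}_3 = \dots = \widetilde{X}_{d+2} = I$; then the relation collapses to $\widetilde{X}_1^q[\widetilde{X}_1,\widetilde{X}_2] = I$, i.e. a point of (the generic fiber of) $M(n,q+1)_{\mathscr{O}_L}$ with the constraint $\det(\widetilde{X}_1)^q = \zeta$ suitably interpreted. The key leverage here is that the commutators $[\widetilde{X}_{2i-1},\widetilde{X}_{2i}]$ for $i \geq 2$ can be continuously scaled to the identity: one uses paths like $t \mapsto (\widetilde{X}_1, \dots, \widetilde{X}_{2i-1}, \, (1-t)(\widetilde{X}_{2i}-I)+I, \dots)$ while correcting the remaining factors so that the product stays equal to $I$. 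Making this precise — producing honest connected families in $X_\zeta$ rather than just pointwise statements — is where some care is needed, and is the first genuine step.

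Next, having reduced to points where only $\widetilde{X}_1,\widetilde{X}_2$ are nontrivial, I would pass to the residual representation $\det \rho^\square$ and use the $n=1$ computation: the determinant map $X^\square_{\overline\rho} \to X^\square_{\mathbf{1}}$ (where $\mathbf{1}: G_K^{\mathrm{ab}} \to k_L^\times$ is trivial) has the property, by the $n=1$ case recalled in the introduction and Section \ref{deforming1}, that $X^\square_{\mathbf{1},\zeta}$ is irreducible. The point, following the methods sketch in the introduction, is that two closed points of $X_\zeta$ whose determinants land in the same connected component of $X^\square_{\mathbf{1}}$ can be connected — and by the remarks in the introduction this last assertion is reduced to the case $n=2$, which is exactly the content of \cite{BJ}. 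So the structure is: (1) deform any point of $X_\zeta$ to a ``two-matrix'' point; (2) use the explicit geometry of $M(n,q+1)$ together with the $\mathrm{GL}_n$-conjugation action (which is connected) and scaling of the semisimple/unipotent parts to connect two-matrix points with equal determinant datum; (3) invoke the reduction to $n=1$ via determinants and the $n=2$ result of B\"ockle–Juschka to handle the remaining connectivity, accounting for the framing via Lemma \ref{framedversal}.

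The main obstacle I anticipate is step (2): controlling the geometry of the Demushkin variety restricted to a determinant fiber well enough to produce explicit connecting families. The naive scaling paths do not obviously stay inside $X_\zeta$ because modifying one matrix forces a compensating modification elsewhere, and one must check that such compensations can always be made continuously and without leaving the locus where $\widetilde{X}_i$ is invertible; the hypothesis $p > n$ (or $\mu_{p^\infty}(K) = \{1\}$) is presumably what makes the relevant $q$th-power and commutator maps behave well enough (e.g. surjectivity of $g \mapsto g^q$ on the relevant unipotent or pro-$p$ groups, or étale-ness of certain maps) for these deformations to exist. I would expect the cleanest route to be to work with the rigid-analytic space $\mathcal{X}^\square_{\overline\rho}$, use that $1 + \mathrm{Mat}_n(\mathfrak{m})$-type analytic groups are path-connected, and build the connecting paths by a sequence of elementary moves, each justified by an implicit function theorem / smoothness argument on an appropriate chart of $M(n,q+1)$ or of the flag-type auxiliary spaces.
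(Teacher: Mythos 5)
Your overall architecture is right for the second half of the argument: once one is down to points where only $\widetilde{X}_1,\widetilde{X}_2$ are nontrivial, the paper does essentially what you describe — conjugation by the connected affinoid $\mathrm{GL}_n(\mathscr{O}_F)$ plus a scaling path $g(t)=\mathrm{diag}(t^{n-1},\dots,t,1)$ to reach a diagonal point $(\mathrm{diag}(\lambda_1,\dots,\lambda_n),I)$, and then the B\"ockle--Juschka $n=2$ fixed-determinant result (their rings are integral domains) applied to $2\times 2$ blocks to connect $\mathrm{diag}(\lambda_1,\dots,\lambda_n)$ to $\mathrm{diag}(\zeta,1,\dots,1)$. (One correction on the role of $p>n$: it is not about surjectivity of power maps on unipotent groups, but about eigenvalues of $M_1$ — the relation $M_2M_1M_2^{-1}=M_1^{q+1}$ forces $\lambda^{q(qm+a)}=1$ with $1\le a\le n<p$, and Hensel's lemma then gives $\lambda^q=1$, which is what makes the diagonal normal form available.)

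The genuine gap is your step (1). You propose to deform an arbitrary point of $X_\zeta$ to a ``two-matrix'' point by explicit paths $t\mapsto (1-t)(\widetilde{X}_{2i}-I)+I$ ``while correcting the remaining factors,'' and you correctly flag that this is where care is needed — but there is no evident way to produce those compensating corrections continuously, and the paper does not attempt to. Instead it uses a purely commutative-algebraic mechanism: Proposition \ref{closedsubspace} (from \cite{CDP}) says that if $x_1,\dots,x_k,x$ is a regular sequence in a Cohen--Macaulay Noetherian local ring $A$, then \emph{every} irreducible component of $\Spec A[1/x]$ meets $\Spec A/(x_1,\dots,x_k)[1/x]$. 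The regular-sequence computation in the proof of Proposition \ref{dimension} shows the entries of $X_3,\dots,X_{d+2}$ together with $\varpi_L$ form such a sequence, so every irreducible component of $X_\zeta$ automatically contains a closed point of $V_\zeta=\Spec R^\square_{\overline\rho}/(X_3,\dots,X_{d+2})[1/\varpi_L]\cap X_\zeta$. Combined with normality (connected $=$ irreducible), this means one never needs to connect a general point to a two-matrix point at all — it suffices to connect the points of $V_\zeta$ to each other. Without this idea (or a working substitute), your step (1) does not go through, and the rest of the argument has nothing to stand on.
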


\begin{remark}
As mentioned in the introduction, the assumption that $p > n$ is a hypothesis that we hope to be able to remove; for now, we do not see an easy way to deal with the extra technicalities that arise.
\end{remark}

\subsection{Connectedness}

Now fix a $q$th root of unity $\zeta \in L$. In \cite{CDP}, the notion of arc-connectedness between points is introduced to prove that a scheme is connected. The idea is that for any two closed points $x_0,x_1 \in X_\zeta(F)$ (where $F/L$ is a finite extension) one should find a path from $x_0$ to $x_1$ by exhibiting $x_0 = x(0)$ and $x_1 = x(1)$ for some point $x \in X_\zeta(T_F)$, where $T_F$ is the Tate algebra in one variable over $F$. From the rigid analytic viewpoint, this amounts to connecting $x_0$ and $x_1$ via a path parametrized by a closed unit disk.

More generally, suppose $x_0,x_1 \in X_\zeta(F)$ are two closed points. If we can find a connected $L$-scheme $Y$, and a map $Y \to X_\zeta$ whose image contains both $x_0$ and $x_1$ then they are contained in the same connected component of $X_\zeta$.

There are two examples of $Y$ (as in the definition) that we will use.

\begin{itemize}
	\item In \cite{CDP} the following example is used: let $T_F$ denote the Tate algebra in one variable, which is the subring of $F\llbracket t \rrbracket$ consisting of power series $\sum_n c_nt^n$ for which $|c_n| \to 0$ as $n \to \infty$, equipped with the sup norm taken over the coefficients. We then take $Y = \Spec T_F$. As mentioned before, this is essentially the method of connecting the two points in the associated rigid analytic space attached to $X_\zeta$ via the rigid closed unit disk.

	\item Consider the following affinoid version of $\mathrm{GL}_n$:
		\[ \mathscr{O}_{\mathrm{GL}_n,F} = (\mathscr{O}_F[(x_{ij})_{i,j=1,\dots,n},b]^\wedge_{\mathfrak{m}_F}/(\det(x_{ij})b-1))[1/\varpi_F]. \]
	We then take $Y = \Spec \mathscr{O}_{\mathrm{GL}_n,F}$. Note that $Y(F) = \mathrm{GL}_n(\mathscr{O}_F)$ and thus the $\mathrm{GL}_n(\mathscr{O}_F)$-orbit of an $F$-valued deformation (which is essentially given by some tuple of invertible matrices and here $\mathrm{GL}_n(\mathscr{O}_F)$ acts by conjugating such a tuple) is contained in a single connected component.
\end{itemize}

The rest of this section will be devoted to proving the following Proposition.

\begin{proposition}\label{connected}
For any finite extension $F/L$, any two closed points in $X_\zeta(F)$ are contained in the same connected component of $X_\zeta$.
\end{proposition}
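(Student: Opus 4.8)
By Corollary \ref{structure} and Lemma \ref{residue}, an $F$-point of $X_\zeta$ is the same as a tuple $\underline A=(A_1,\dots,A_{d+2})\in\mathrm{GL}_n(\mathscr O_F)^{d+2}$ satisfying the Demu\v{s}kin relation $A_1^q[A_1,A_2][A_3,A_4]\cdots[A_{d+1},A_{d+2}]=I$ with $\det A_1=\zeta$ (up to a harmless preliminary reduction we may assume each $A_i\equiv I\bmod\mathfrak m_F$). A convenient base point is $\underline A^{(0)}:=(\mathrm{diag}(\zeta,1,\dots,1),I,\dots,I)$, which lies in $X_\zeta(F)$ because $\mu_q\subseteq L\subseteq F$; it suffices to connect an arbitrary $\underline A$ to $\underline A^{(0)}$ through a finite chain of the two connecting schemes $Y$ described above, namely arcs $\Spec T_{F'}$ and conjugation orbits $\Spec\mathscr O_{\mathrm{GL}_n,F'}$. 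Here I am free to enlarge $F$ to any finite $F'/F$, since $Y$ and its image sit inside $X_\zeta$ regardless of the base field; and $\Spec\mathscr O_{\mathrm{GL}_n,F'}$ is itself connected (indeed irreducible), so conjugating $\underline A$ by \emph{any} element of $\mathrm{GL}_n(\mathscr O_{F'})$ — permutation matrices and unipotents included — stays within one connected component of $X_\zeta$.

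I would carry out the connection in three stages, exploiting that $d\ge 2$ here (as $\mu_p\subseteq K$ and $p\ge 3$), so there is always a ``slack'' pair of generators, together with the classical fact that every element of $\mathrm{SL}_n$ over a field is a single commutator. (1) Using arcs, contract $A_2$ and $A_5,\dots,A_{d+2}$ to the identity, feeding the resulting discrepancy into the slack pair by re-solving the relation as $[A_3(t),A_4(t)]=A_1(t)^{-q}\cdot(\text{prescribed correction})$; this puts $\underline A$ into the shape $A_1^q[A_3,A_4]=I$ with the other generators trivial. (2) Next, deform $A_1$ along an arc through $\mathrm{GL}_n$ — still absorbing into $(A_3,A_4)$ — until $A_1$ is diagonal with entries in $\mu_q$. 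Here the hypothesis $p>n$ is essential: the relation forces $\lambda\mapsto\lambda^{q+1}$ to permute the eigenvalues of $A_1$, say with cycle length $m\le n$, whence $\lambda^{(q+1)^m-1}=1$; and $(q+1)^m-1=q\cdot(\text{a }p\text{-adic unit})$ precisely because $p\nmid m$, which pins every eigenvalue of $A_1$ into $\mu_q$ and forces $A_1$ to be semisimple of order dividing $q$. (3) Finally, with $A_1=\mathrm{diag}(\zeta_1,\dots,\zeta_n)$, $\prod\zeta_i=\zeta$, merge the eigenvalues two at a time — working inside successive $\mathrm{GL}_2$-blocks and invoking the connectedness of the two-dimensional fibers from \cite{BJ} (applicable since $p>n\ge 2$ gives $p>2$) — until $A_1=\mathrm{diag}(\zeta,1,\dots,1)$; after cleaning up $(A_3,A_4)$ within a fixed torus this lands $\underline A$ at $\underline A^{(0)}$.

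The step I expect to be the main obstacle is (1)--(2): the ``absorb into the slack pair'' move amounts to solving $[X,Y]=g(t)$ with $X,Y\in\mathrm{GL}_n$ analytically in the disc parameter $t$ and in such a way that the solution \emph{never leaves} $\mathrm{GL}_n$ over the closed disc — but the commutator morphism $\mathrm{SL}_n\times\mathrm{SL}_n\to\mathrm{SL}_n$ has no global section, so the arc $A_1(t)$ (hence $g(t)=A_1(t)^{-q}$) must be chosen to avoid the degenerate locus, and one must simultaneously keep every relevant determinant from vanishing on the disc. This is exactly the kind of explicit matrix computation that \cite{CDP} and \cite{BJ} carry out by hand for $n=2$; generalizing it, and using the semisimplicity that $p>n$ forces on $A_1$ to keep the paths under control, is where the real work lies. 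In the boundary situations where a slack pair is in short supply or $A_1$ is too degenerate, one falls back, as in stage (3), on conjugation by unipotents and permutations and on arcs confined to a fixed maximal torus.
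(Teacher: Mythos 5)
Your overall architecture (connect everything to the base point $(\mathrm{diag}(\zeta,1,\dots,1),I,\dots,I)$; diagonalize $A_1$ using the eigenvalue constraint forced by $p>n$; then merge eigenvalues pairwise via the irreducibility of the two-dimensional fixed-determinant deformation spaces from \cite{BJ}) matches the paper, and your stages (2) and (3) are essentially Propositions \ref{connecttodiagonal} and \ref{lastconnection}. But there is a genuine gap exactly where you flag it, and the paper's key idea is precisely the device that makes that gap unnecessary. To trivialize the generators $A_2,A_5,\dots,A_{d+2}$ (more precisely, $A_3,\dots,A_{d+2}$) the paper does \emph{not} construct any path at all: it invokes Proposition \ref{closedsubspace} (from \cite{CDP}), which says that in a Cohen--Macaulay local ring every irreducible component of $\Spec A[1/x]$ meets $\Spec A/(x_1,\dots,x_k)[1/x]$ when $x_1,\dots,x_k,x$ is a regular sequence. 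Since the entries of $X_3,\dots,X_{d+2}$ together with $\varpi_L$ were shown to be a regular sequence in the proof of Proposition \ref{dimension}, and since $X^\square_{\overline\rho}$ is already known to be normal (so connected components are irreducible), every connected component of $X_\zeta$ meets the locus $V_\zeta$ where $M_3=\cdots=M_{d+2}=I$ and the relation collapses to $M_2M_1M_2^{-1}=M_1^{q+1}$. Your proposed substitute --- solving $[A_3(t),A_4(t)]=A_1(t)^{-q}\cdot(\text{correction})$ analytically over the closed disc, with $A_3(t),A_4(t)$ constrained to lie in $1+\mathrm{Mat}_n(\mathfrak m)$ --- is not carried out, and you yourself note that the commutator morphism admits no global section; as written, stages (1)--(2) rest on an unproven and likely hard claim, so the proof is incomplete at its load-bearing step.

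A second, smaller error: the relation does \emph{not} force $A_1$ to be semisimple. The argument with $\lambda^{(q+1)^m}=\lambda$ and $p\nmid m$ pins the eigenvalues into $\mu_q$, but nontrivial Jordan blocks survive (e.g.\ $M_1=\bigl(\begin{smallmatrix}1&a\\0&1\end{smallmatrix}\bigr)$, $M_2=\mathrm{diag}(q+1,1)\in 1+\mathrm{Mat}_2(\mathfrak m_F)$ satisfy the relation). The paper handles this by showing $M_2$ preserves the filtration $\ker(M_1-\lambda)^i$ on each generalized eigenspace, simultaneously upper-triangularizing $M_1,M_2$ over $\mathscr O_F$ via the Iwasawa decomposition, and then contracting the strictly upper-triangular parts with the path $g(t)=\mathrm{diag}(t^{n-1},\dots,t,1)$. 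Since your stage (2) leans on the claimed semisimplicity ``to keep the paths under control,'' this step also needs repair.
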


We note the following corollary:

\begin{proof}[Proof of Theorem \ref{maintheorem}]
If $X_\zeta$ is not connected, then since $X_\zeta$ is Jacobson, we can find two \textit{closed} points $x,y \in X_\zeta(F)$ living on different connected components, where $F/L$ can be taken to be finite by Lemma \ref{residue}. The result then follows immediately from Proposition \ref{connected}.
\end{proof}

\subsection{Restriction to a Closed Subspace}

For $F/L$ a finite extension, an $F$-point of $X_\zeta$ is the data of a tuple $(M_1,\dots,M_{d+2}) \subset 1+\mathrm{Mat}_n(\mathfrak{m}_F)$ satisfying the equations
	\[ M_1^q[M_1,M_2]\cdots[M_{d+1},M_{d+2}] = I, \det(M_1) = \zeta. \]
It is difficult to get any useful intuition for this equation in its full form, so it is helpful to first restrict to a certain nicely chosen closed subspace of $X_\zeta$ whose $F$-points satisfy a more useful equation. To see that this suffices, we note the following fact.

\begin{proposition}[{{\cite[Proposition 5.1]{CDP}}}]\label{closedsubspace}
Suppose $A$ is a Cohen-Macaulay Noetherian local ring and $x_1,\dots,x_k,x$ is a regular sequence in $A$. Then every irreducible component of $\Spec A[1/x]$ meets the closed subset $\Spec A/(x_1,\dots,x_k)[1/x]$.
\end{proposition}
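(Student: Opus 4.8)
The plan is to reduce the statement to one about prime ideals. The irreducible components of $\Spec A[1/x]$ are the $V(\mathfrak p)$ with $\mathfrak p$ a minimal prime of $A$ not containing $x$, and such a component meets $\Spec A/(x_1,\dots,x_k)[1/x]$ precisely when there is a prime $\mathfrak q\supseteq \mathfrak p+(x_1,\dots,x_k)$ with $x\notin\mathfrak q$. So it suffices to produce, for every minimal prime $\mathfrak p$ of $A$, such a $\mathfrak q$ (the condition $x\notin\mathfrak p$ then comes for free, since $\mathfrak p\subseteq\mathfrak q$).

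I would use two facts about Cohen--Macaulay rings. First, $A$ is equidimensional, so $\dim A/\mathfrak p=\dim A=:d$ for every minimal prime $\mathfrak p$. Second, since $x_1,\dots,x_k$ is a regular sequence, $A/(x_1,\dots,x_k)$ is again Cohen--Macaulay, of dimension $d-k$; hence it is unmixed, so its associated primes are exactly its minimal primes, all of dimension $d-k$, and since $x_1,\dots,x_k,x$ is a regular sequence, $x$ is a nonzerodivisor on $A/(x_1,\dots,x_k)$ and so lies in none of those primes.

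The heart of the argument is a dimension pinch. Fix a minimal prime $\mathfrak p$ of $A$. In the local ring $A/\mathfrak p$ the standard inequality $\dim R/(y_1,\dots,y_k)\ge\dim R-k$ gives $\dim A/(\mathfrak p+(x_1,\dots,x_k))\ge d-k$, so I may pick a prime $\mathfrak q$ minimal over $\mathfrak p+(x_1,\dots,x_k)$ with $\dim A/\mathfrak q\ge d-k$. On the other hand $\mathfrak q$ contains $(x_1,\dots,x_k)$, hence is a prime of the $(d-k)$-dimensional ring $A/(x_1,\dots,x_k)$, so $\dim A/\mathfrak q\le d-k$. Therefore $\dim A/\mathfrak q=d-k=\dim A/(x_1,\dots,x_k)$; but in a Cohen--Macaulay local ring a prime realizing the full dimension of the ring must have height $0$, so $\mathfrak q$ is a minimal prime of $A/(x_1,\dots,x_k)$. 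By the second fact above, $x\notin\mathfrak q$, and this $\mathfrak q$ is exactly the witness required.

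The one place to be careful, and what I expect to be the only real subtlety, is to resist working directly in $A/\mathfrak p$: that ring need not be Cohen--Macaulay, so cutting it by the $k$ elements $x_i$ may drop dimension by less than $k$ and may create embedded primes, leaving one unable to control where $x$ sits among the minimal primes of $(A/\mathfrak p)/(x_1,\dots,x_k)$. The trick is to invoke the Cohen--Macaulay hypothesis on $A$ through the quotient $A/(x_1,\dots,x_k)$, which is unmixed of dimension exactly $d-k$, and to let the lower bound coming from $A/\mathfrak p$ and the upper bound coming from $A/(x_1,\dots,x_k)$ pinch $\mathfrak q$ down into a minimal prime of that well-behaved ring, where $x$ is a nonzerodivisor.
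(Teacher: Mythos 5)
Your argument is correct. The paper does not reprove this statement -- it simply cites \cite[Proposition 5.1]{CDP} -- and your dimension pinch (Krull's bound in $A/\mathfrak p$ from below, equidimensionality of the Cohen--Macaulay quotient $A/(x_1,\dots,x_k)$ from above, forcing $\mathfrak q$ to be a minimal prime of that quotient, on which $x$ is a nonzerodivisor) is exactly the standard proof given there.
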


In the proof of Proposition \ref{dimension} we showed that the coefficients of $(X_3,\dots,X_{d+2})$ (in any order) along with $\varpi_L$ form a regular sequence in $R^\square_{\overline\rho}$, so in particular, we want to consider the closed subspace $V \subset X^\square_{\overline\rho}$ defined by
	\[ V = \Spec R^\square_{\overline\rho}/(X_3,\dots,X_{d+2})[1/\varpi_L]. \]
Let $V_\zeta = X_\zeta \times_{X^\square_{\overline\rho}} V$.

\begin{corollary}\label{justconnectV}
Fix a finite extension $F/L$. If any two closed points in $V_\zeta(F)$ are contained in the same connected component of $X_\zeta$, then any two closed points in $X_\zeta(F)$ are contained in the same connected component of $X_\zeta$.
\end{corollary}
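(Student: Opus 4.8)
The plan is to read this off from Proposition \ref{closedsubspace}, feeding in the Cohen-Macaulay structure of $R^\square_{\overline\rho}$ and the explicit regular sequence produced in the proof of Proposition \ref{dimension}. Recall from that proof that the matrix entries of $X_3,\dots,X_{d+2}$ together with $\varpi_L$ form a regular sequence, in any order, in the Cohen-Macaulay local ring $R^\square_{\overline\rho}$. Applying Proposition \ref{closedsubspace} with $A = R^\square_{\overline\rho}$, with $x_1,\dots,x_k$ the entries of $X_3,\dots,X_{d+2}$, and with $x = \varpi_L$, every irreducible component of $X^\square_{\overline\rho} = \Spec R^\square_{\overline\rho}[1/\varpi_L]$ meets $V = \Spec R^\square_{\overline\rho}/(X_3,\dots,X_{d+2})[1/\varpi_L]$.

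Next I would transfer this to $V_\zeta$. Since $X_\zeta = \pi^{-1}(\zeta)$ is open and closed in $X^\square_{\overline\rho}$ it is a union of connected components, and by normality (Theorem \ref{normal}) these are precisely the irreducible components; hence each connected component $Z$ of $X_\zeta$ is a union of irreducible components of $X^\square_{\overline\rho}$, so by the previous paragraph it meets $V$, and therefore meets $V_\zeta = X_\zeta \times_{X^\square_{\overline\rho}} V$. As $V_\zeta$ is a closed subscheme of the Jacobson scheme $X^\square_{\overline\rho}$, the nonempty open subset $Z \cap V_\zeta$ of $V_\zeta$ contains a closed point of $V_\zeta$; this is then a closed point of $X^\square_{\overline\rho}$, so it has residue field finite over $L$ by Lemma \ref{residue}.

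Finally, given closed points $x_0, x_1 \in X_\zeta(F)$ lying on connected components $Z_0, Z_1$ of $X_\zeta$, I would choose closed points $v_i \in Z_i \cap V_\zeta$ for $i = 0, 1$ as above. Enlarging $F$ to a finite extension of $L$ containing the residue fields of $v_0$ and $v_1$ --- harmless, since it changes neither the connected components of $X_\zeta$ nor the hypothesis, which in the application is verified for all finite extensions of $L$ simultaneously --- we may assume $v_0, v_1 \in V_\zeta(F)$; the hypothesis then puts $v_0$ and $v_1$ in one connected component of $X_\zeta$, forcing $Z_0 = Z_1$, and since $x_i \in Z_i$ this is the claim. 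I do not expect a genuine obstacle: the geometric content is entirely contained in Proposition \ref{closedsubspace} together with the regular-sequence computation of Proposition \ref{dimension}, and the only things to be careful about are the order of the regular sequence and the minor bookkeeping with fields of definition in the last step.
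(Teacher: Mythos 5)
Your proposal is correct and follows essentially the same route as the paper: apply Proposition \ref{closedsubspace} (via the regular sequence from Proposition \ref{dimension}) together with normality to see that each relevant connected component of $X_\zeta$ meets $V_\zeta$, then use the Jacobson property to replace $x_0,x_1$ by closed points of $V_\zeta$ over a possibly enlarged finite extension and invoke the hypothesis. Your explicit remark that enlarging $F$ is harmless because the hypothesis is verified for all finite extensions in the application is a fair point that the paper's own proof glosses over with ``after possibly extending $F$.''
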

\begin{proof}
This is implied by Proposition \ref{closedsubspace}, as follows: Let $x_0,x_1$ denote two closed points in $X_\zeta(F)$ and let $Z_0$ and $Z_1$ denote the connected component of $X_\zeta$ containing them, respectively. Note these are also connected components of $X^\square_{\overline\rho}$. By normality of $X^\square_{\overline\rho}$ and Proposition \ref{closedsubspace}, $Z_0$ and $Z_1$ both meet $V \cap X_\zeta = V_\zeta$. Note $Z_0$ and $Z_1$ are both connected, so we can replace $x_0,x_1$ with some closed (note $V_\zeta$ is Jacobson) points $v_0,v_1 \in V_\zeta(F)$, after possibly extending $F$ by a finite extension. But these are contained in the same connected component by assumption.
\end{proof}

In the remainder of the section, we show that any two points in $V_\zeta(F)$ are contained in the same connected component of $X_\zeta$. To do this, we connect every point in $V_\zeta(F)$ to the point corresponding to
	\[ M_1 = \mathrm{diag}(\zeta,1,\dots,1), M_2 = \cdots = M_{d+2} = I. \]

\subsection{Constructing Paths}\label{paths}

Fix a point $x \in V_\zeta(F)$ for some finite extension $F/L$. This is the same as giving a pair $M_1,M_2 \in 1+\mathrm{Mat}_n(\mathfrak{m}_F)$ such that
	\[ M_2M_1M_2^{-1} = M_1^{q+1}. \]
(all other $M_i$ are equal to $I$ in this subspace). Enlarge $F$ if needed so that $F$ contains every eigenvalue of $M_1$. The equation above implies that the $(q+1)$-power map from the set of eigenvalues of $M_1$ to itself is a bijection, so if $\lambda$ is an eigenvalue of $M_1$, then there exists some $a$ in the range $[1,n]$ such that $\lambda^{(q+1)^a} = \lambda$. In other words, $\lambda^{q(qm + a)} = 1$ for some positive integer $m$. So $\lambda^q$ is a $(qm+a)$th root of unity that reduces to $1$ mod $\mathfrak{m}_F$, but $p > n$ and thus $p \nmid a$, so actually $\lambda^q = 1$ by Hensel's Lemma and the fact that $\gcd(qm+a,p) = 1$.

\begin{remark}
It is the possibility of the eigenvalues being higher order $p$-power roots of unity that forces us, for the time being, to use the assumption that $p > n$. We hope to be able to find another argument that works for higher order eigenvalues of $M_1$.
\end{remark}

\begin{proposition}\label{connecttodiagonal}
The point $x \in V_\zeta(F)$ is in the same connected component of $X_\zeta$ as the point defined by $(\mathrm{diag}(\lambda_1,\dots,\lambda_n),I)$, where the $\lambda_i$ is some ordering of the eigenvalues of $M_1$.
\end{proposition}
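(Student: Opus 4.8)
The plan is to join $x$ to $(\mathrm{diag}(\lambda_1,\dots,\lambda_n),I)$ by concatenating three moves inside $X_\zeta$: first a conjugation by an element of $\mathrm{GL}_n(\mathscr{O}_F)$ that makes \emph{both} $M_1$ and $M_2$ upper triangular; then a path over $T_F$, given by conjugation by $\mathrm{diag}(t^{n-1},\dots,t,1)$, that degenerates this pair to a pair of diagonal matrices; and finally a linear-scaling path over $T_F$ that brings the second matrix to $I$. As in the discussion preceding the statement, we allow ourselves to enlarge $F$.

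The first move is the heart of the matter. I claim that, after enlarging $F$, the operators $M_1$ and $M_2$ stabilize a common full flag of $F^n$. Since each eigenvalue $\mu$ of $M_1$ satisfies $\mu^q = 1$ and hence $\mu^{q+1} = \mu$, the matrices $M_1$ and $M_1^{q+1}$ have the same generalized eigenspace decomposition $F^n = \bigoplus_\mu V_\mu$, so the relation $M_2 M_1 M_2^{-1} = M_1^{q+1}$ forces $M_2$ to preserve each $V_\mu$. Writing $M_1|_{V_\mu} = \mu(1+N)$ with $N$ nilpotent, the Demu\v{s}kin relation on $V_\mu$ reads $M_2(1+N)M_2^{-1} = (1+N)^{q+1}$, whence $M_2 N M_2^{-1} = N\cdot Q(N)$ with $Q(N) = (q+1) + \binom{q+1}{2}N + \cdots$ a polynomial in $N$ whose constant term $q+1$ is a unit in $\mathscr{O}_F$ (as $q$ is a $p$-power, so $p \nmid q+1$); then $Q(N)$ is invertible and commutes with $N$, so each $\ker N^j$ is $M_2$-stable. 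Thus the filtration $0 \subset \ker N \subset \ker N^2 \subset \cdots \subset V_\mu$ is simultaneously $M_1$- and $M_2$-stable, and after refining its graded pieces to a full flag — on each of which $M_1$ is the scalar $\mu$ while $M_2$ is triangularizable after enlarging $F$ — and summing over $\mu$ we obtain the desired common flag. Adapting it to the standard lattice $\mathscr{O}_F^n$ (possible since $\mathrm{GL}_n(\mathscr{O}_F)$ acts transitively on full flags) and conjugating by the corresponding matrix in $\mathrm{GL}_n(\mathscr{O}_F)$ — which stays within a single connected component of $X_\zeta$ by the orbit argument recalled above — we may assume $M_1$ and $M_2$ are upper triangular, with $M_1$ carrying the eigenvalues $\lambda_1,\dots,\lambda_n$ on its diagonal.

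For the second move, with $M_1,M_2$ upper triangular, put $E(t) = \mathrm{diag}(t^{n-1},t^{n-2},\dots,t,1)$; the matrix $E(t)M_iE(t)^{-1}$ has $(j,k)$-entry $t^{k-j}(M_i)_{jk}$ for $k\ge j$ and $0$ below the diagonal, so only nonnegative powers of $t$ appear and the entries of $E(t)M_iE(t)^{-1} - I$ are power-bounded and topologically nilpotent in $T_F$. Hence $(E(t)M_1E(t)^{-1}, E(t)M_2E(t)^{-1}, I, \dots, I)$ defines a $T_F$-point of $X_\zeta$: the Demu\v{s}kin equation and the normalization $\det = \zeta$ persist because the formal conjugation by $E(t)$ is legitimate for upper triangular arguments. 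Evaluating at $t = 1$ returns $x$, while evaluating at $t = 0$ gives $(\mathrm{diag}(\lambda_1,\dots,\lambda_n), D_2)$, where $D_2$ is the diagonal of $M_2$; this is a point of $X_\zeta$ because $\mathrm{diag}(\lambda_i)^q = I$ and diagonal matrices commute. The third move is the path $t \mapsto (\mathrm{diag}(\lambda_i),\, I + t(D_2 - I), I, \dots, I)$ over $T_F$, which remains in $X_\zeta$ (all matrices are diagonal) and connects $(\mathrm{diag}(\lambda_i), D_2)$ to $(\mathrm{diag}(\lambda_i), I)$.

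The main obstacle, to my mind, is the simultaneous-triangularization claim — specifically the observation that $M_2$ respects the nilpotent filtration of each generalized eigenspace of $M_1$, which is exactly what is needed to make the $E(t)$-path well defined (a naive conjugation of an untriangularized $M_2$ by $E(t)$ would introduce negative powers of $t$). The rest is the routine-but-delicate bookkeeping, in the style of \cite{CDP}, of checking that the tuples written down above are honest $T_F$-points: that their matrix entries are power-bounded and topologically nilpotent after subtracting $I$, and that the Demu\v{s}kin relation and the determinant condition are preserved along each path.
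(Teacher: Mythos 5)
Your proposal is correct and follows essentially the same route as the paper: show $M_2$ preserves the generalized eigenspaces of $M_1$ and the kernel filtration of its nilpotent part, simultaneously upper-triangularize over $\mathscr{O}_F$ (the paper does your "transitivity on flags" step via the Iwasawa decomposition $E = NE_0$), degenerate to diagonal matrices by conjugating with $\mathrm{diag}(t^{n-1},\dots,t,1)$ over $T_F$, and finish with the linear path to $I$. The only (cosmetic) difference is in verifying that $M_2$ preserves $\ker(M_1-\mu)^j$: you expand $(1+N)^{q+1}-1 = N\cdot Q(N)$ with $Q(N)$ a unit since $p \nmid q+1$, while the paper factors $(x^{q+1}-\lambda)/(x-\lambda)$ into linear factors and invokes Hensel's lemma — both arguments are valid.
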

\begin{proof}
We regard $M_1,M_2$ as elements in $\mathrm{GL}(F^n)$. Since the eigenvalues of $M_1$ are contained in $\mu_q(F)$, we have a decomposition into generalized eigenspaces
	\[ F^n = \bigoplus_{\lambda \in \mu_q(F)} W_\lambda. \]
For any eigenvalue $\lambda \in \mu_q(F)$, consider the filtration $\mathrm{Fil}^\bullet_\lambda$ defined by
	\[ 0 \subset \mathrm{Fil}^1_\lambda = \ker(X_1-\lambda)^1 \subset \cdots \subset \mathrm{Fil}_\lambda^{m_\lambda} = \ker(X_1-\lambda)^{m_\lambda} = W_\lambda, \]
where $m_\lambda$ is the maximum size of a Jordan block of $M_1$ with eigenvalue $\lambda$. Let
	\[ f(x) = \frac{x^{q+1}-\lambda}{x-\lambda} = \prod_{i=1}^q (x-\zeta_{q+1}^i\lambda) \]
where $\zeta_{q+1}$ is a primitive $(q+1)$th root of unity in $F$ (enlarge $F$ if needed). Note $\zeta_{q+1}^i = 1$ if and only if $\zeta_{q+1}^i \equiv 1 \mod \mathfrak{m}_F$ by Hensel's Lemma. Then $M_1^{q+1} - \lambda = f(M_1)(M_1-\lambda)$, and each of the $M_1-\zeta_{q+1}^i\lambda$ are invertible because $\zeta_{q+1}^i\lambda \not\equiv 1 \mod \mathfrak{m}_F$, so $f(M_1)$ is invertible. If $v \in \ker(M_1-\lambda)^a$ then
	\[ f(M_1)^a(M_1-\lambda)^aM_2v = (M_1^{q+1}-\lambda)^aM_2v = M_2(M_1-\lambda)^av = 0, \]
so multiplying by $f(M_1)^{-a}$ shows that $M_2$ preserves both $W_\lambda$ and the filtration $\mathrm{Fil}_\lambda$. Let $n_\lambda = \dim W_\lambda$ and let $e_1,\dots,e_{n_\lambda}$ be a basis of $W_\lambda$ constructed by picking a basis for each $\mathrm{Fil}^i_\lambda$ for which $M_2$ is upper triangular, and then concatenating them together in order. Note $M_1$ clearly respects $\mathrm{Fil}_\lambda$ and acts via the scalar $\lambda$ on $\mathrm{Fil}^i_\lambda/\mathrm{Fil}^{i-1}_\lambda$ so in particular $M_1$ is upper triangular for $e_1,\dots,e_{n_\lambda}$. Thus if $E \in \mathrm{GL}_n(F)$ is the change of basis matrix that takes the standard basis of $F^n$ into the basis determined by the $e_i$, then $EM_1E^{-1}$ and $EM_2E^{-1}$ are both upper triangular. But by the Iwasawa decomposition we can write $E = NE_0$ where $N \in B(F)$ is an element of the standard Borel and $E_0 \in \mathrm{GL}_n(\mathscr{O}_F)$, and thus $M_1' := E_0M_1E_0^{-1}$ and $M_2' := E_0M_2E_0^{-1}$ are still upper triangular. But this allows us to define a map
	\[ \Spec \mathscr{O}_{\mathrm{GL}_n,F} \to V_\zeta \]
taking $g \mapsto (gM_1g^{-1},gM_2g^{-1})$. Specializing at $g = I$ gives $x$, and specializing at $g = E_0 \in \mathrm{GL}_n(\mathscr{O}_F)$ gives a point $x' \in V_\zeta(F)$ determined by $M_1'$ and $M_2'$, so both $(M_1,M_2)$ and $(M_1',M_2')$ live in the same connected component of $V_\zeta$. Now we define a path
	\[ \Spec T_F \to V_\zeta \]
by $t \mapsto (g(t)M_1'g(t)^{-1},g(t)M_2'g(t)^{-1})$ where
	\[ g(t) = \mathrm{diag}(t^{n-1},t^{n-2},\dots,t,1). \]
This is well defined since $M_1',M_2'$ are upper-triangular (in particular, the $g(t)$-conjugated matrix has no negative powers of $t$ and doesn't affect the diagonal). and has the effect of ``killing the strictly upper-triangular part'', in the sense that specializing at $t = 1$ gives the point $x'$ and specializing at $t = 0$ gives the point $x^\star$ with corresponding matrices
	\[ M_1^\star = \mathrm{diag}(\lambda_1,\dots,\lambda_n) \mathrm{ and } M_2^\star = \mathrm{diag}(1+m_1,\dots,1+m_n) \]
for some $m_i \in \mathfrak{m}_F$. But then the path $M_1^\star(t) = M_1^\star$ and $M_2^\star(t) = \mathrm{diag}(1+tm_1,\dots,1+tm_n)$ connects $M_2^\star$ to $I$.
\end{proof}

\begin{proposition}\label{lastconnection}
If $F/L$ is a finite extension and $x \in X_\zeta(F)$ is the point
	\[ M_1 = \mathrm{diag}(\lambda_1,\dots,\lambda_n), M_2 = \dots = M_{d+2} = I \]
with $\lambda_1\cdots\lambda_n=\zeta$ then $x$ is in the same connected component of $X_\zeta$ as the point $x_\zeta \in X_\zeta(F)$ corresponding to
	\[ M_1 = \mathrm{diag}(\zeta,1,\dots,1), M_2 = \dots = M_{d+2} = I. \]
\end{proposition}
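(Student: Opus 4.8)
\emph{Proof proposal.} The plan is to reach $x_\zeta$ from $x$ by a chain of path-connected families inside $X_\zeta$, organized around two reductions: reordering the diagonal entries of $M_1$, and then \emph{merging} its eigenvalues two at a time, each merge being reduced to the $n=2$ situation settled by B\"ockle--Juschka. For the reordering: if $P_\sigma \in \mathrm{GL}_n(\mathscr O_F)$ is a permutation matrix, then $g \mapsto (gM_1g^{-1},I,\dots,I)$ defines a morphism $\Spec\mathscr O_{\mathrm{GL}_n,F}\to X_\zeta$ — conjugation preserves the Demu\v{s}kin relation, the determinant, and membership in $1+\mathrm{Mat}_n(\mathfrak m)$, and $M_1^q=I$ because the $\lambda_i$ are $q$th roots of unity — whose source is connected and whose image contains $x$ (at $g=I$) and $(\mathrm{diag}(\lambda_{\sigma(1)},\dots,\lambda_{\sigma(n)}),I,\dots,I)$ (at $g=P_\sigma$). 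So $x$ lies in the same connected component of $X_\zeta$ as any permutation of its eigenvalues.

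Given reordering, it suffices to prove the following \emph{merge move}: for $\mu,\nu,\lambda_3,\dots,\lambda_n\in\mu_q(F)$ with product $\zeta$, the points $(\mathrm{diag}(\mu,\nu,\lambda_3,\dots,\lambda_n),I,\dots,I)$ and $(\mathrm{diag}(\mu\nu,1,\lambda_3,\dots,\lambda_n),I,\dots,I)$ lie in the same connected component of $X_\zeta$. Granting this, $x_\zeta$ is reached by induction: merge the first two entries, reorder to move the next $\lambda_i$ into the second slot, merge again, and so on; after $n-1$ merges the diagonal is $\mathrm{diag}(\lambda_1\cdots\lambda_n,1,\dots,1)=\mathrm{diag}(\zeta,1,\dots,1)$, while all other $M_i$ have stayed equal to $I$ throughout.

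For the merge move I would reduce to $n=2$. Let $Z_{\mu\nu}$ be the $F$-scheme playing the role of $X_{\mu\nu}$ for the trivial representation $G_K\to\mathrm{GL}_2(k_F)$ (use Lemma \ref{basechange} to work over $\mathscr O_F$, and Lemma \ref{framedversal} so that framed and versal rings agree). Block-diagonal insertion of $\mathrm{GL}_2$ together with the constant block $\Lambda=\mathrm{diag}(\lambda_3,\dots,\lambda_n)$, which satisfies $\Lambda^q=I$, gives on the presentations of Corollary \ref{structure} an $\mathscr O_F$-algebra homomorphism with $\widetilde X_1\mapsto\mathrm{diag}(\widetilde Y_1,\Lambda)$ and $\widetilde X_i\mapsto\mathrm{diag}(\widetilde Y_i,I_{n-2})$ for $i\geq 2$; it sends the Demu\v{s}kin relator to a block-diagonal element with blocks the relator for $\mathrm{GL}_2$ (which is $I_2$ in the target) and $\Lambda^q=I_{n-2}$, so it is well defined, and since it carries $\det\widetilde X_1$ to $(\det\widetilde Y_1)\,\lambda_3\cdots\lambda_n$ it induces a closed immersion $\iota\colon Z_{\mu\nu}\hookrightarrow X_\zeta$ taking $(\mathrm{diag}(\mu,\nu),I,\dots,I)$ and $(\mathrm{diag}(\mu\nu,1),I,\dots,I)$ to the two points of the merge move. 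By the $n=2$ case of Conjecture \ref{BJ}, proved in \cite{BJ} (available here since $p>n\geq 2$ forces $p>2$), $Z_{\mu\nu}$ is irreducible, hence connected — more precisely, the arc-connectedness over $F$ furnished by the argument of \cite{BJ} shows any two $F$-points of $Z_{\mu\nu}$ lie in one connected component — so applying $\iota$ (the image of a connected scheme is connected) yields the merge move.

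The real content is this merge move, and the point to keep in mind is that it cannot be carried out inside the subspace where only $M_1,M_2$ are nontrivial: there the relation $M_2M_1M_2^{-1}=M_1^{q+1}$ forces the eigenvalues of $M_1$ to be $q$th roots of unity, hence locally constant, so no path can change $\{\mu,\nu\}$ to $\{\mu\nu,1\}$; one is forced to switch on the commutator $[M_3,M_4]$ in the Demu\v{s}kin relation. Should one wish to avoid \cite{BJ}, the explicit route is to take a diagonal path $M_1(t)$ of determinant $\mu\nu$ between the two endpoints, note that $M_1(t)^q\equiv I\bmod\mathfrak m_F^2$ (because $v_p\binom{q}{j}\geq 1$ for $0<j<q$ as $q=p^k$, and $q\geq 2$) and equals $I$ at $t=0,1$, and write $M_1(t)^{-q}=[M_3(t),M_4(t)]$ with $M_3(t),M_4(t)\in 1+\mathrm{Mat}_2(\mathfrak m_F\langle t\rangle)$ trivial at $t=0,1$, the extra depth mod $\mathfrak m_F^2$ being exactly what keeps $M_3,M_4$ in the principal congruence subgroup; the residual difficulty there — expressing a non-unipotent element of $\mathrm{SL}_2$ as such a commutator — is the crux either way.
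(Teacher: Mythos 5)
Your proposal is correct and follows essentially the same route as the paper: the paper's proof also merges two eigenvalues at a time by mapping an irreducible two-dimensional deformation space (there, the fixed-determinant ring $R^{\square,\lambda_{n-1}\lambda_n}_{\overline\rho,2}$, shown to be a domain in \cite{BJ}) block-diagonally into $X_\zeta$ alongside the remaining diagonal entries, viewed as characters $c_{\lambda_i}$. The only cosmetic differences are that the paper always merges the rightmost two entries, so it needs no permutation/reordering step, and that it invokes the fixed-determinant two-dimensional ring rather than the connected component $X_{\mu\nu}$ of the full two-dimensional deformation space.
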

\begin{proof}
The idea is to treat the $n = 2$ case and then focus on the $2 \times 2$ diagonal blocks and replace $M_1 = \mathrm{diag}(\lambda_1,\dots,\lambda_n)$ by $\mathrm{diag}(\lambda_1,\dots,\lambda_{n-1}\lambda_n,1)$ and then $\mathrm{diag}(\lambda_1,\dots,\lambda_{n-2}\lambda_{n-1}\lambda_n,1,1)$
etc.

For each $\lambda \in \mu_q(F)$, define a character
\begin{align*}
	c_\lambda: G_K \to (G_K^{p})^{\mathrm{ab}} \cong \langle x_1,\dots,x_{d+2} : x_1^q = 1 \rangle^{\mathrm{ab}} &\to \mathscr{O}_L^\times \\
	(x_1, x_2,\dots,x_{d+2}) &\mapsto (\lambda,1,\dots,1)
\end{align*}
and let $R_{\overline\rho,2}^{\square,\lambda}$ denote the 2-dimensional universal framed deformation ring for $\overline\rho$ with fixed determinant $c_{\lambda}$, which comes with a universal lift $\rho^{\lambda}: G_K \to \mathrm{GL}_2(R_{\overline\rho,2}^{\square,\lambda})$ and generic fiber $X_{\overline\rho,2}^{\square,\lambda} = \Spec R_{\overline\rho,2}^{\square,\lambda}[1/p]$.

By universality, the representation
	\[ c_{\lambda_1} \oplus \cdots \oplus c_{\lambda_{n-2}} \oplus \rho^{\lambda_{n-1}\lambda_n}: G_K \to \mathrm{GL}_n(R_{\overline\rho,2}^{\square,\lambda_{n-1}\lambda_n}) \]
induces a map $R_{\overline\rho}^\square \to R_{\overline\rho,2}^{\square,\lambda_{n-1}\lambda_n}$ and thus a map $X_{\overline\rho,2}^{\square,\lambda_{n-1}\lambda_n} \to X_{\overline\rho}^\square$ which, by definition, descends to a map $X_{\overline\rho,2}^{\square,\lambda_{n-1}\lambda_n} \to X_{\zeta}$. By \cite[Theorem 1.5 and Remark 1.7]{BJ} (combined with the fact that ``versal'' and ``framed'' are interchangeable for the trivial representation, cf. Lemma \ref{framedversal}), $R_{\overline\rho,2}^{\square,\lambda_{n-1}\lambda_n}$ is an integral domain, so $X_{\overline\rho,2}^{\square,\lambda_{n-1}\lambda_n}$ is irreducible. But this $(X_{\overline\rho,2}^{\square,\lambda_{n-1}\lambda_n})$-point of $X_\zeta$ specializes to both $x$ and the point given by
	\[ M_1 = \mathrm{diag}(\lambda_1,\dots,\lambda_{n-1}\lambda_n,1), M_2 = \dots = M_{d+2} = I \]
so these points are contained in the same irreducible component of $X_\zeta$. Now we repeat the process with $c_{\lambda_1} \oplus \cdots \oplus c_{\lambda_{n-3}} \oplus \rho^{\lambda_{n-2}\lambda_{n-1}\lambda_n} \oplus 1$, then $c_{\lambda_1} \oplus \cdots \oplus c_{\lambda_{n-4}} \oplus \rho^{\lambda_{n-3}\lambda_{n-2}\lambda_{n-1}\lambda_n} \oplus 1 \oplus 1$, etc. After $n-1$ iterations of this process, we specialize to $x_\zeta$.
\end{proof}

\begin{remark}
In a previous version of this paper \cite{Iye}, we constructed an explicit path between the two points, which worked under the hypothesis $[K:\mathbf{Q}_p] \geq 6$. At the suggestion of the anonymous referee, we used the results from \cite{BJ} to remove this extra assumption. We thank the referee for this suggestion.
\end{remark}

\begin{proof}[Proof of Proposition \ref{connected}]
By Propositions \ref{connecttodiagonal} and \ref{lastconnection}, any two points in $V_\zeta(F)$ are contained in the same connected component of $X_\zeta$. Then apply Corollary \ref{justconnectV}.
\end{proof}

\subsection{Deforming \texorpdfstring{$\mathbf{1}$}{1}}\label{deforming1}

To prove the main theorem, we need to describe $X_{\det\overline\rho}^\square = X_\mathbf{1}^\square$.

Let $\mathbf{1}: G_K^{\mathrm{ab}} \to k_L^\times$ denote the trivial character, and let $D_{\mathbf{1}}: \mathsf{Art}_{\mathscr{O}_L} \to \mathsf{Set}$ denote the universal deformation problem for $\mathbf{1}$. Deformations of the trivial character to a ring $A \in \mathsf{Art}_{\mathscr{O}_L}$ are valued in the $p$-group $1+\mathfrak{m}_A$, so they are really representations of $(G_K^p)^{\mathrm{ab}}$, which by part 2 of Theorem \ref{groupstructure} is the free abelian pro-$p$ group on generators $g_1,\dots,g_{d+2}$ subject to the single relation $g_1^q = 1$. Therefore, Proposition \ref{representable} (with $n=1$) implies the following lemma.

\begin{lemma}
$D_\mathbf{1}$ is pro-represented by the complete local Noetherian ring
	\[ R_\mathbf{1} = \mathscr{O}_L\llbracket x_1,\dots,x_{d+2} \rrbracket/((1+x_1)^q-1). \]
\end{lemma}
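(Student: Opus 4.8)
The plan is to reduce the statement directly to Proposition \ref{representable}. First I would note that a lift of $\mathbf{1}$ to $A \in \mathsf{Art}_{\mathscr{O}_L}$ is a continuous homomorphism $G_K \to 1 + \mathfrak{m}_A \subset A^\times$, and that $1 + \mathfrak{m}_A$ is a finite $p$-group by Lemma \ref{pro-p} applied with $n = 1$. Hence every such lift factors through the maximal pro-$p$ quotient $G_K^p$. Moreover, since $A^\times$ is abelian, conjugation by $1 + \mathfrak{m}_A$ on $\mathrm{GL}_1(A) = A^\times$ is trivial, so the framed and unframed deformation functors of $\mathbf{1}$ coincide; in particular $D_{\mathbf{1}}$ agrees with the framed deformation functor of the trivial representation $G_K \to \mathrm{GL}_1(k_L)$, and it suffices to compute the latter's representing ring.

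Next I would invoke Theorem \ref{groupstructure}(2), which is available under our standing assumption $q > 2$, to present $G_K^p$ as the quotient of the free pro-$p$ group on $g_1,\dots,g_{d+2}$ by the single relation $r(g_1,\dots,g_{d+2}) = g_1^q[g_1,g_2]\cdots[g_{d+1},g_{d+2}] = 1$. Then the $n = 1$ case of Proposition \ref{representable} applies verbatim: each ``matrix of indeterminates'' $X_i$ is a single variable $x_i$, we have $\widetilde{X}_i = 1 + x_i$, and because $1 \times 1$ matrices commute, every bracket $[\widetilde{X}_{2j-1},\widetilde{X}_{2j}]$ equals $1$. Therefore $r(\widetilde{X}_1,\dots,\widetilde{X}_{d+2}) = (1+x_1)^q$, and Proposition \ref{representable} gives
\[ R_{\mathbf{1}} = \mathscr{O}_L\llbracket x_1,\dots,x_{d+2} \rrbracket/((1+x_1)^q - 1), \]
which is the asserted ring; it is complete local Noetherian since it is a quotient of a power series ring over the complete discrete valuation ring $\mathscr{O}_L$.

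I do not expect any genuine obstacle here: the lemma is a bookkeeping specialization of Proposition \ref{representable}, and the only two points worth spelling out are that the commutator terms drop out over $\mathrm{GL}_1$ and that framing is vacuous for characters. (One could instead phrase the argument through the abelianization $(G_K^p)^{\mathrm{ab}}$, as in the discussion preceding the lemma; but since a free abelian pro-$p$ group of rank $d + 2 > 2$ is not itself presented by a single relation among its generators, it is cleanest to apply Proposition \ref{representable} to the one-relator group $G_K^p$ directly and observe that the commutators vanish.)
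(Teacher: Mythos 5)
Your proof is correct and is essentially the paper's argument: the paper likewise deduces the lemma from Proposition \ref{representable} with $n=1$, after observing that deformations of $\mathbf{1}$ land in the abelian $p$-group $1+\mathfrak{m}_A$. The only (cosmetic) difference is that the paper phrases the reduction via $(G_K^p)^{\mathrm{ab}}$ with the single relation $g_1^q=1$, whereas you apply the proposition to the one-relator group $G_K^p$ directly and note that the commutators in the Demu\v{s}kin relation die in $\mathrm{GL}_1$ --- a point you already flag yourself, and which if anything makes the appeal to Proposition \ref{representable} slightly more literal.
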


\begin{corollary}
$R_{\mathbf{1}}$ is $\mathscr{O}_L$-torsion free and has $q$ irreducible components, given by $x_1 = \zeta - 1$ for each $\zeta \in \mu_q(L)$. These are also the connected components.
\end{corollary}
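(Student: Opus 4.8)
The plan is to make the Demu\v{s}kin relation for the character completely explicit and reduce everything to unique factorization in a regular local ring. Since $|\mu_{p^\infty}(L)| \geq q$, the field $L$ contains all $q$ solutions of $x^q = 1$; these lie in $\mathscr{O}_L^\times$, and since $L$ has characteristic $0$ the polynomial $x^q - 1$ is separable over $L$, so they are pairwise distinct. Writing $S := \mathscr{O}_L\llbracket x_1,\dots,x_{d+2}\rrbracket$, this gives the identity
\[
(1+x_1)^q - 1 \;=\; \prod_{\zeta \in \mu_q(L)} \bigl(x_1 - (\zeta-1)\bigr)
\]
in $S$. Each factor generates a prime ideal, since $S/\bigl(x_1 - (\zeta-1)\bigr) \cong \mathscr{O}_L\llbracket x_2,\dots,x_{d+2}\rrbracket$ is a domain, and distinct factors are non-associate. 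As $S$ is a (complete) regular local ring, hence a UFD, the element $f := (1+x_1)^q - 1$ is squarefree, the ideal $(f) = \bigcap_{\zeta}\bigl(x_1 - (\zeta-1)\bigr)$ is radical, and its minimal primes are exactly the $\bigl(x_1 - (\zeta-1)\bigr)$. So $R_\mathbf{1} = S/(f)$ is reduced and has precisely the $q$ irreducible components $V\bigl(x_1 - (\zeta-1)\bigr)$, $\zeta \in \mu_q(L)$.

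Next I would check $\mathscr{O}_L$-torsion freeness. The element $\varpi_L$ is prime in $S$ because $S/\varpi_L = k_L\llbracket x_1,\dots,x_{d+2}\rrbracket$ is a domain, and $\varpi_L \nmid f$ because, $q$ being a power of $p$, one has $f \equiv x_1^q \not\equiv 0 \pmod{\varpi_L}$. Hence if $\varpi_L g \in (f)$ in $S$, then $f \mid \varpi_L g$ forces $f \mid g$, so $\varpi_L$ is a non-zerodivisor on $R_\mathbf{1}$; as $\mathscr{O}_L$ is a DVR this is exactly $\mathscr{O}_L$-flatness. By the discussion in Remark \ref{genericf}, the irreducible components of $R_\mathbf{1}$ are then in bijection with those of the generic fiber $\Spec R_\mathbf{1}[1/\varpi_L]$.

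The remaining point — that these irreducible components are also the connected components — I would obtain by passing to the generic fiber, where the factors become pairwise comaximal: for $\zeta \neq \zeta'$ the difference $\bigl(x_1 - (\zeta-1)\bigr) - \bigl(x_1 - (\zeta'-1)\bigr) = \zeta' - \zeta$ is a nonzero element of $\mathfrak{m}_L$, hence a unit once $\varpi_L$ is inverted. The Chinese Remainder Theorem then yields
\[
R_\mathbf{1}[1/\varpi_L] \;\cong\; \prod_{\zeta \in \mu_q(L)} \mathscr{O}_L\llbracket x_2,\dots,x_{d+2}\rrbracket[1/\varpi_L],
\]
a finite product of domains, so $\Spec R_\mathbf{1}[1/\varpi_L]$ is the disjoint union of $q$ irreducible (hence connected) open-and-closed pieces, the one indexed by $\zeta$ being cut out by $x_1 = \zeta - 1$.

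The only genuinely substantive subtlety, and the reason the assertion about connected components must be understood on the generic fiber, is that over $R_\mathbf{1}$ itself every component passes through the closed point (because $\zeta - 1 \in \mathfrak{m}_L$), so $\Spec R_\mathbf{1}$ and $\Spf R_\mathbf{1}$ are connected; the separation into $q$ pieces appears only after inverting $\varpi_L$. Everything else is routine commutative algebra.
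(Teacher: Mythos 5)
Your proof is correct and follows essentially the same route as the paper: both establish $\mathscr{O}_L$-torsion-freeness from $\varpi_L \nmid (1+x_1)^q-1$, and both identify the components via the splitting of $(1+x_1)^q-1$ into the distinct linear factors $x_1-(\zeta-1)$, with the separation into connected pieces visible only after inverting $\varpi_L$. Your UFD/CRT packaging in $\mathscr{O}_L\llbracket x_1,\dots,x_{d+2}\rrbracket$ is just a more explicit version of the paper's reduction to $L[x_1]/((1+x_1)^q-1)$ via the tensor decomposition, and your closing caveat that the connected-component claim must be read on the generic fiber is exactly the reading intended by Remark \ref{genericf}.
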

\begin{proof}
Note $\varpi_L$ does not divide $(1+x_1)^q-1$, so $R_\mathbf{1}$ is $\mathscr{O}_L$-torsion free. We have
	\[ \mathscr{O}_L\llbracket x_1,\dots,x_{d+2} \rrbracket/((1+x_1)^q-1) \cong \mathscr{O}_L[x_1]/((1+x_1)^q-1) \otimes_{\mathscr{O}_L} \mathscr{O}_L\llbracket x_2,\dots,x_{d+2} \rrbracket, \]
but $\mathscr{O}_L\llbracket x_2,\dots,x_{d+2} \rrbracket$ is an integral domain, so it suffices to describe the irreducible components of $\mathscr{O}_L[x_1]/((1+x_1)^q-1)$. Furthermore, since $R_\mathbf{1}$ is $\mathscr{O}_L$-torsion free it is $\mathscr{O}_L$-flat, so it suffices to check the description of the irreducible components on the generic fiber $L[x_1]/((1+x_1)^q-1)$. But Spec of this ring is just the finite set of $L$-points $x_1 = \zeta-1$ for each $\zeta \in \mu_q(L)$.
\end{proof}

\begin{remark}\label{lubintate}
The local reciprocity map $\widehat{K^\times} \xrightarrow\sim G_K^{\mathrm{ab}} \twoheadrightarrow (G_K^p)^{\mathrm{ab}}$ sends some primitive $q$th root of unity $\zeta$ to $g_1$. The irreducible component containing a closed point $x \in X_\mathbf{1}$ with residue field $F/L$ and corresponding character $\chi_x: G_K^{\mathrm{ab}} \to R_\mathbf{1}^\times \to F^\times$ is determined by the element $\chi_x(g_1) = (\chi_x \circ \mathrm{rec}_K)(\zeta) \in \mu_q(L)$.
\end{remark}

\subsection{Proof of Theorem \ref{FirstMainTheorem}}\label{theproof}

\begin{proof}
If $q = 1$, then $R_{\overline\rho}^\square$ and $R_{\det\overline\rho}^\square$ are formally smooth, so there is nothing to prove. Assume $q > 2$. Then Theorem \ref{normal} says that $\mathcal{X}_{\overline\rho}^\square$ is normal. If $p > n$, the set $\mu_q(K)$ classifies the connected components of both $X_{\det\overline\rho}^\square$ and $X_{\overline\rho}^\square$, and this classification is visibly compatible with respect to the determinant map $d: X_{\det\overline\rho}^\square \to X_{\overline\rho}^\square$. Since the preimage of a connected component is the union of connected components, $d$ must induce a bijection between the connected components, which are irreducible.
\end{proof}

\section{Crystalline Density}\label{crystallinesection}

In this section we prove that the crystalline closed points of $X^\square_{\overline\rho}$ (i.e. closed points whose induced $p$-adic representations are crystalline) are Zariski dense. In particular, we show that each irreducible component contains a crystalline point, and the work of Nakamura in \cite{Nak} (following Chenevier and Gouv\^{e}a-Mazur) shows that the Zariski closure of the crystalline points is the union of some collection of the irreducible components. However, Nakamura assumes that $\mathrm{End}_{k_L[G_K]}(\overline\rho) = k$ and works with the unframed deformation space, so we need to slightly modify the arguments to work in the framed case. To do so, we use work of Breuil-Hellmann-Schraen in \cite{BHS} (and related papers) on the \textit{trianguline deformation space} that is related to the finite slope subspace used by Nakamura, and is already framed.

Note throughout this section we will actually work with the \textit{rigid} generic fiber $\mathcal{X}^{\square}_{\overline\rho}$.

\subsection{Review of Trianguline Deformation Theory}

Here we recall the definition of the trianguline deformation space, following \cite{BHS}. From now on, assume that $L$ contains the Galois closure of $K$, i.e. $L$ contains every embedding of $K$ into $\overline{\mathbf{Q}_p}$. Let $f = [K_0:\mathbf{Q}_p]$, where $K_0$ is the maximal unramified extension of $\mathbf{Q}_p$ contained in $K$. Let $\mathsf{Rig}_L$ denote the category of rigid analytic spaces over $L$. We define $\mathcal{T}: \mathsf{Rig}_L \to \mathsf{Ab}$ taking
	\[ \mathcal{T}(X) = \{\text{continuous characters } K^\times \to \mathscr{O}_X(X)^\times\} \]
whose group structure is given by multiplication of characters. This is represented by a rigid analytic group variety, also denoted $\mathcal{T} \in \mathsf{Rig}_L$. If $\mathbf{h} \in \mathbf{Z}^{\mathrm{Hom}(K,L)}$, then we define the \textit{algebraic} character
	\[ (\cdot)^{\mathbf{h}}: K^\times \mapsto L^\times, z \mapsto \prod_{\tau: K \hookrightarrow L} \tau(z)^{h_\tau}. \]
Fix a uniformizer $\varpi_K$ of $K$. Let $|\cdot|_K$ denote the normalized $\varpi_K$-adic absolute value on $K$, and let $\mathrm{val}_K$ denote the corresponding valuation. Then the set of $L$-points
	\[ \{(\cdot)^{\mathbf{-h}}, |\cdot|_K(\cdot)^{\mathbf{h+1}} : \mathbf{h} \in (\mathbf{Z}_{\geq0})^{\mathrm{Hom}(K,L)}\} \]
is Zariski closed in $\mathcal{T}$, and we define $\mathcal{T}_{\mathrm{reg}}$ to be its open complement inside $\mathcal{T}$. Further, let $\mathcal{T}_{\mathrm{reg}}^n$ denote the Zariski open subset of $\mathcal{T}^n$ defined by
	\[ \mathcal{T}_{\mathrm{reg}}^n = \{(\delta_1,\dots,\delta_n) : \delta_i/\delta_j \in \mathcal{T}_{\mathrm{reg}} \text{ whenever } i \neq j\}. \]

The theory of trianguline representations is built on the theory of $(\varphi,\Gamma_K)$-modules over the Robba ring, where $\Gamma_K = \mathrm{Gal}(K(\mu_p^\infty)/K)$. Denote by
	\[ R_K = \{f(z) \in K\llbracket z,z^{-1} \rrbracket : f \text{ converges on } \{z \in \mathbf{C}_p : r < |z| < 1\} \text{ for some } 0 < r < 1\} \]
the usual Robba ring with coefficients in $K$, and if $A$ is a $K$-module-finite $K$-algebra, then define the relative Robba ring $R_{A,K} := R_K \otimes_{\mathbf{Q}_p} A$. If $L'/L$ is a finite extension and $\rho: G_K \to \mathrm{GL}_n(L')$ is a Galois representation, we denote by $\mathbf{D}_{\mathrm{rig}}(\rho)$ the associated $(\varphi,\Gamma_K)$-module over $R_{L',K}$.

\begin{proposition}[{{\cite[Construction 6.2.4 and Theorem 6.2.14]{KPX}}}]\label{rankonephigamma}
If $L'/L$ is a finite extension, then there is a canonical bijection $\delta \mapsto R_{L',K}(\delta)$ between $\mathcal{T}(L')$ and the set of isomorphism classes of rank 1 $(\varphi,\Gamma_K)$-modules over $R_{L',K}$.

\end{proposition}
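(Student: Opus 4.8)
This is due to Kedlaya--Pottharst--Xiao, and as we only use the statement I will just outline the strategy. The plan is to first construct the map $\delta \mapsto R_{L',K}(\delta)$ and then check it is bijective. \emph{Construction.} Fix a uniformizer $\varpi_K$ of $K$; every $\delta \in \mathcal{T}(L')$ factors as $\delta = \mathrm{unr}(\delta(\varpi_K))\cdot\delta_0$, where $\mathrm{unr}(a)$ is the unramified character with $\mathrm{unr}(a)(\varpi_K) = a$ and $\mathrm{unr}(a)|_{\mathscr{O}_K^\times} = 1$, and $\delta_0 := \delta\cdot\mathrm{unr}(\delta(\varpi_K))^{-1}$ is trivial on $\varpi_K$. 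Since $\mathscr{O}_K^\times$ is compact, $\delta_0$ is valued in $\mathscr{O}_{L'}^\times$, so under local class field theory its restriction to $\mathscr{O}_K^\times$ (the image of inertia in $G_K^{\mathrm{ab}}$) extends, using the splitting of $G_K^{\mathrm{ab}}$ determined by $\varpi_K$, to a continuous Galois character $\chi_{\delta_0}: G_K \to (L')^\times$; set $R_{L',K}(\delta_0) := \mathbf{D}_{\mathrm{rig}}(\chi_{\delta_0})$. Define $R_{L',K}(\mathrm{unr}(a))$ to be the free rank $1$ module $R_{L',K}\cdot e$ with $\Gamma_K$ acting trivially on $e$ and $\varphi$ rescaled by $a$, normalized so that this matches $\mathbf{D}_{\mathrm{rig}}$ whenever $\mathrm{unr}(a)$ comes from a Galois character. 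Finally put $R_{L',K}(\delta) := R_{L',K}(\mathrm{unr}(\delta(\varpi_K)))\otimes_{R_{L',K}}R_{L',K}(\delta_0)$. Replacing $\varpi_K$ by $u\varpi_K$ with $u\in\mathscr{O}_K^\times$ changes $\chi_{\delta_0}$ and $\mathrm{unr}(\delta(\varpi_K))$ by mutually inverse unramified twists, so $R_{L',K}(\delta)$ is independent of this choice up to canonical isomorphism; by construction $\delta\mapsto R_{L',K}(\delta)$ is a homomorphism for the tensor product.

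\emph{Injectivity.} Using the homomorphism property it suffices to show that $R_{L',K}(\delta)\cong R_{L',K}$ forces $\delta = \mathbf{1}$: the Sen weights recover the algebraic part of $\delta|_{\mathscr{O}_K^\times}$, the $\Gamma_K$-action on a basis recovers its torsion part, and $\delta(\varpi_K)$ is then read off from the Frobenius eigenvalue after removing the contribution of $\delta_0$. \emph{Surjectivity.} A rank $1$ $(\varphi,\Gamma_K)$-module $D$ over $R_{L',K}$ is free, since the relevant Robba ring is a finite product of Bézout domains, so $D = R_{L',K}\cdot e$ with $\varphi(e) = fe$ for a unit $f$ and a compatible continuous cocycle giving the $\Gamma_K$-action. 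By Kedlaya's slope theory $D$ has a well-defined slope, and after twisting by a suitable $R_{L',K}(\mathrm{unr}(a))$ it becomes étale; an étale rank $1$ $(\varphi,\Gamma_K)$-module equals $\mathbf{D}_{\mathrm{rig}}(\chi)$ for a unique character $\chi: G_K\to(L')^\times$, and local class field theory turns $\chi$ back into a character of $K^\times$, putting $D$ in the image.

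The hard part is surjectivity, which rests on the full structure theory of $(\varphi,\Gamma_K)$-modules over the Robba ring in families: Kedlaya's slope filtration theorem, the identification of étale objects with Galois representations, and the $(\varphi,\Gamma_K)$-cohomology computations controlling automorphisms. All of this is precisely \cite[Construction 6.2.4 and Theorem 6.2.14]{KPX} (generalizing the case $K = \mathbf{Q}_p$), which we use as a black box.
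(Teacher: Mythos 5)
Your outline is correct and matches the paper's treatment: the paper gives no proof of this proposition at all, simply citing \cite[Construction 6.2.4 and Theorem 6.2.14]{KPX}, and your sketch (split $\delta$ into an unramified part and a part trivial on $\varpi_K$, realize the latter via local class field theory and $\mathbf{D}_{\mathrm{rig}}$, twist the Frobenius for the former, then get surjectivity from freeness over the Bézout factors, the slope filtration, and the étale case) is an accurate summary of what KPX actually do. No discrepancy to report.
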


\begin{definition}
Let $\rho: G_K \to \mathrm{GL}_n(L')$ be a continuous representation, and fix an $n$-tuple $\delta = (\delta_1,\dots,\delta_n) \in \mathcal{T}^n(L')$. Then we say that $\rho$ is \textit{trianguline of parameter $\delta$} if the $(\varphi,\Gamma_K)$-module $\mathbf{D}_{\mathrm{rig}}(\rho)$ admits a full flag $\mathrm{Fil}^\bullet$ of sub-$(\varphi,\Gamma_K)$-modules (which are free and direct summands as $R_{L',K}$-modules) such that each graded piece $\mathrm{Fil}^i/\mathrm{Fil}^{i-1}$ is isomorphic to $R_{L',K}(\delta_i)$ (c.f. Lemma \ref{rankonephigamma}). We call $\mathrm{Fil}^\bullet$ a \textit{triangulation} of $\rho$.
\end{definition}

Suppose $\rho: G_K \to \mathrm{GL}_n(L')$ is crystalline. Then the associated filtered $\varphi$-module $\mathbf{D}_{\mathrm{cris}}(\rho)$ is finite free rank of rank $n$ over $(K_0 \otimes_{\mathbf{Q}_p} L') \cong \bigoplus_{\tau: K_0 \hookrightarrow L'} L'$, and breaks up under this identification as
	\[ \mathbf{D}_{\mathrm{cris}}(\rho) = \bigoplus_{\tau: K_0 \hookrightarrow L'} \mathbf{D}_{\mathrm{cris},\tau}(\rho) \]
where $\mathbf{D}_{\mathrm{cris},\tau}(\rho) = (B_{\mathrm{cris}} \otimes_{K_0,\tau} \rho)^{G_K}$, such that $\varphi^f$ acts $L'$-linearly on each $\mathbf{D}_{\mathrm{cris},\tau}$. But these $\tau$-labeled $\varphi^f$ operators are all mutually conjugate, so the characteristic polynomial of $\varphi^f$ is independent of $\tau$, and thus we get a multi-set of $\varphi^f$-eigenvalues $\{\alpha_1,\dots,\alpha_n\}$ living in $\overline{\mathbf{Q}_p}$, independent of $\tau: K_0 \hookrightarrow L'$.

There is an alternative characterization of triangulations of a trianguline crystalline representation.

\begin{definition}
Let $\rho: G_K \to \mathrm{GL}_n(L')$ be a crystalline representation. Then a \textit{refinement} of $\rho$ is a full filtration $F^\bullet$ of $\mathbf{D}_{\mathrm{cris}}(\rho)$ by free and $\varphi$-stable $(K_0 \otimes_{\mathbf{Q}_p} L')$-modules.
\end{definition}

In fact, refinements are the same as triangulations: given a triangulation $\mathrm{Fil}^\bullet$ of $\rho$, one can construct a refinement
	\[ F^i := (\mathrm{Fil}^i[\frac1t])^{\Gamma_K}, \]
(where $t = \log(1+z) \in R_K$ is the usual crystalline period: recall $\mathbf{D}_{\mathrm{cris}}(\rho) = (\mathbf{D}_{\mathrm{rig}}(\rho)[\frac1t])^{\Gamma_K}$) and \cite[Proposition 2.4.1]{BeCh}\footnote{In \cite{BeCh} this is proven when $K = \mathbf{Q}_p$, but the proof generalizes straightforwardly to our case: the point is that an ordering on the Hodge-Tate weights and on the $\varphi^f$-eigenvalues determine the parameter $\delta$ of the triangulation.} shows that this map is a bijection.

In particular, suppose the $\varphi^f$-eigenvalues $\{\alpha_i\}$ are distinct. By enlarging $L'$, we may assume that $\{\alpha_i\} \subset L'$. By picking a corresponding eigenvector in $\mathbf{D}_{\mathrm{cris},\tau}(\rho)$ for each $\tau: K_0 \hookrightarrow L'$ and putting them all together, we get a $(K_0 \otimes_{\mathbf{Q}_p} L')$-linear decomposition
	\[ \mathbf{D}_{\mathrm{cris}}(\rho) = (K_0 \otimes_{\mathbf{Q}_p} L')e_1 \oplus \dots \oplus (K_0 \otimes_{\mathbf{Q}_p} L')e_n, \]
such that $\varphi^f(e_i) = \alpha_ie_i$. Since refinements are required to be $\varphi$-stable, every refinement must be of the form
	\[ F^i_\sigma = \bigoplus_{j=1}^i (K_0 \otimes_{\mathbf{Q}_p} L')e_{\sigma(j)} \]
for some $\sigma \in \Sigma_n$, where $\Sigma_n$ is the symmetric group on $n$ letters. We denote the corresponding triangulation $\mathrm{Fil}^\bullet_\sigma$ for each $\sigma \in \Sigma_n$: these are all of the triangulations.

\begin{lemma}[{{\cite[Lemma 2.1]{Smoothness}}}]\label{uniquetriangulation}
If $\rho: G_K \to \mathrm{GL}_n(L')$ is crystalline and trianguline of parameter $\delta$, then there is some ordering $(h_{\tau,1},\dots,h_{\tau,d})_{\tau: K \hookrightarrow L'}$ of the labeled Hodge-Tate weights of $\rho$ and some ordering $(\alpha_1,\dots,\alpha_n)$ of the $\varphi^f$-eigenvalues such that
	\[ \delta_i = (\cdot)^{\mathbf{h}_i}\mathrm{unr}(\alpha_i) \]
where $\mathrm{unr}(\alpha_i)$ is the unramified character of $K^\times$ taking $\varpi_K$ to $\alpha_i$.
\end{lemma}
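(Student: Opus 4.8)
\emph{Proof sketch.}
The plan is to pull the triangulation through to a refinement of $\mathbf{D}_{\mathrm{cris}}(\rho)$ and then read off the Hodge-Tate weights and $\varphi^f$-eigenvalues graded piece by graded piece. Fix a triangulation $\mathrm{Fil}^\bullet$ of $\mathbf{D}_{\mathrm{rig}}(\rho)$ of parameter $\delta = (\delta_1,\dots,\delta_n)$, so that $\mathrm{Fil}^i/\mathrm{Fil}^{i-1} \cong R_{L',K}(\delta_i)$. Since $\rho$ is crystalline, $\mathbf{D}_{\mathrm{rig}}(\rho)$ is a crystalline $(\varphi,\Gamma_K)$-module, i.e. $\mathbf{D}_{\mathrm{cris}}(\rho) = (\mathbf{D}_{\mathrm{rig}}(\rho)[1/t])^{\Gamma_K}$ has rank $n$ over $K_0 \otimes_{\mathbf{Q}_p} L'$. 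For any short exact sequence $0 \to D' \to D \to D'' \to 0$ of $(\varphi,\Gamma_K)$-modules over $R_{L',K}$ one has $\mathrm{rk}\,\mathbf{D}_{\mathrm{cris}}(D') \le \mathrm{rk}\, D'$ and $\mathrm{rk}\,\mathbf{D}_{\mathrm{cris}}(D'') \le \mathrm{rk}\, D''$, while left-exactness of $\mathbf{D}_{\mathrm{cris}}$ gives $\mathrm{rk}\,\mathbf{D}_{\mathrm{cris}}(D) \le \mathrm{rk}\,\mathbf{D}_{\mathrm{cris}}(D') + \mathrm{rk}\,\mathbf{D}_{\mathrm{cris}}(D'')$; if $D$ is crystalline these inequalities are forced to be equalities, so $D'$ and $D''$ are crystalline and $0 \to \mathbf{D}_{\mathrm{cris}}(D') \to \mathbf{D}_{\mathrm{cris}}(D) \to \mathbf{D}_{\mathrm{cris}}(D'') \to 0$ is exact. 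Applying this inductively along $\mathrm{Fil}^\bullet$ shows that each $R_{L',K}(\delta_i)$ is crystalline and that $F^i := (\mathrm{Fil}^i[1/t])^{\Gamma_K}$ is a $\varphi$-stable filtration of $\mathbf{D}_{\mathrm{cris}}(\rho)$ with $F^i/F^{i-1} \cong \mathbf{D}_{\mathrm{cris}}(R_{L',K}(\delta_i))$; this is exactly the statement that the image of the triangulation is a refinement (cf. \cite[Proposition 2.4.1]{BeCh} and the footnote above).

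Next I would invoke the classification of rank-one crystalline $(\varphi,\Gamma_K)$-modules over $R_{L',K}$: such a module is isomorphic to $R_{L',K}((\cdot)^{\mathbf{h}}\mathrm{unr}(\alpha))$ for a unique $\mathbf{h} \in \mathbf{Z}^{\mathrm{Hom}(K,L')}$ and a unique $\alpha \in (L')^\times$, its labeled Hodge-Tate weights are $\mathbf{h}$, and $\varphi^f$ acts on the rank-one $(K_0 \otimes_{\mathbf{Q}_p} L')$-module $\mathbf{D}_{\mathrm{cris}}(R_{L',K}((\cdot)^{\mathbf{h}}\mathrm{unr}(\alpha)))$ by the scalar $\alpha$; this follows from Proposition \ref{rankonephigamma} together with the explicit description of $R_{L',K}(\delta)$, and is recorded e.g. in \cite{Nak}. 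Applying it to each $\delta_i$ produces $\mathbf{h}_i$ and $\alpha_i$ with $\delta_i = (\cdot)^{\mathbf{h}_i}\mathrm{unr}(\alpha_i)$.

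It remains to match these data with the invariants of $\rho$. Labeled Hodge-Tate weights are additive in short exact sequences of $(\varphi,\Gamma_K)$-modules, so the labeled Hodge-Tate weights of $\rho$ are precisely the multiset $\bigsqcup_{i=1}^n \mathbf{h}_i$; organizing this multiset by embedding gives the asserted ordering of the labeled Hodge-Tate weights. For the Frobenius eigenvalues, $F^\bullet$ is a $\varphi^f$-stable filtration of $\mathbf{D}_{\mathrm{cris}}(\rho)$ on whose $i$-th graded piece $\varphi^f$ acts by $\alpha_i$; hence in a basis adapted to $F^\bullet$ the operator $\varphi^f$ is block upper-triangular with diagonal blocks $\alpha_i \cdot \mathrm{id}$, so its characteristic polynomial is $\prod_{i=1}^n (T - \alpha_i)$. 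Thus $(\alpha_1,\dots,\alpha_n)$ is an ordering of the $\varphi^f$-eigenvalues of $\rho$, and together with the ordering of Hodge-Tate weights this proves the lemma.

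The only substantive step is the first paragraph --- that a triangulation of a crystalline $(\varphi,\Gamma_K)$-module has crystalline graded pieces and induces a refinement of $\mathbf{D}_{\mathrm{cris}}$ --- and this is standard. The one thing requiring genuine care is keeping the normalizations in the rank-one classification (the sign convention for Hodge-Tate weights, and any power of $p$ hidden in $\mathrm{unr}(\alpha)$ versus the value $\delta_i(\varpi_K)$) consistent with the conventions fixed earlier in this section; this is bookkeeping rather than a real obstacle.
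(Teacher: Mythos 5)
The paper does not prove this lemma itself --- it cites it directly from \cite[Lemma 2.1]{Smoothness} --- and the surrounding text already records the triangulation--refinement dictionary $F^i = (\mathrm{Fil}^i[\frac1t])^{\Gamma_K}$ via \cite[Proposition 2.4.1]{BeCh} that your first paragraph reproves. Your sketch is correct and is essentially the standard argument underlying the cited result (crystallinity of sub and quotient by rank counting, the rank-one classification, and additivity of Hodge--Tate weights and Frobenius eigenvalues along the filtration), with the normalization caveats correctly identified as the only delicate point.
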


In particular, if the $\varphi^f$-eigenvalues are all distinct, then there exists a unique triangulation of $\rho$ with parameter $\delta$.

We note a few useful types of crystalline representation. Our definition of Hodge-Tate weights is normalized so that the cyclotomic character has weight $+1$.

\begin{definition}\label{benign}
Let $\rho: G_K \to \mathrm{GL}_n(L')$ be a crystalline representation with $\tau$-labeled Hodge-Tate weights $\{h_{\tau,1} \geq \dots \geq h_{\tau,n}\}_{\tau: K \hookrightarrow L'}$ and $\varphi^f$-eigenvalues $\{\alpha_1,\dots,\alpha_n\}$.
\begin{itemize}
	\item If $h_{\tau,i} \neq h_{\tau,j}$ for all $i \neq j$ and all $\tau$, then we say $\rho$ is \textit{regular} or \textit{regular crystalline}.
	\item If $\alpha_i \neq \alpha_j$ for all $i \neq j$, we say $\rho$ is \textit{$\varphi^f$-generic}.
	\item If $\rho$ is regular and $\mathrm{Fil}^\bullet$ is a triangulation of $\rho$ such that the Hodge-Tate weights of $\mathrm{Fil}^i$ are exactly $\{h_{\tau,1} >  \dots > h_{\tau,i}\}$ for each $\tau$, we say that $\mathrm{Fil}^\bullet$ is \textit{noncritical}.
	\item If $\rho$ is regular and every triangulation of $\rho$ is noncritical, then we say that $\rho$ is \textit{noncritical}.
	\item If $\rho$ is regular, $\varphi^f$-generic, and noncritical, and if additionally $\alpha_i \neq p^{\pm f}\alpha_j$ for all $i \neq j$, then we say that $\rho$ is \textit{benign}.
\end{itemize}
\end{definition}

Now recall that a point of $\mathcal{X}_{\overline\rho}^\square$ is the same as a surjection $f: R_{\overline\rho}^\square[1/p] \twoheadrightarrow L'$ for some finite extension $L'/L$, which gives rise to a $p$-adic representation $\rho_f: G_K \to \mathrm{GL}_n(L')$. We define the subset
	\[ \mathcal{U}_{\mathrm{tri},\overline\rho}^\square = \{(f,\delta) \in \mathcal{X}_{\overline\rho}^\square \times_L \mathcal{T}_{\mathrm{reg}}^n : \rho_f \text{ is trianguline of parameter } \delta\}. \]
Then the \textit{trianguline deformation space $\mathcal{X}_{\mathrm{tri},\overline\rho}^\square$} is the Zariski closure of $\mathcal{U}_{\mathrm{tri},\overline\rho}^\square$ in $\mathcal{X}_{\overline\rho}^\square \times_L \mathcal{T}_{\mathrm{reg}}^n$.

Recall that at any point $f \in \mathcal{X}_{\overline\rho}^\square$, the completion $\mathscr{O}_{\mathcal{X}_{\overline\rho}^\square,f}^\wedge$ is the universal framed deformation ring for the induced representation $\rho_f: G_K \to \mathrm{GL}_n(L')$. In fact, a similar result holds for a point $x = (f_x,\delta_x) \in \mathcal{X}_{\mathrm{tri},\overline\rho}^\square$ such that $\rho_x := \rho_{f_x}$ is benign, as we recall now.

Since $\rho_x$ is in particular $\varphi^f$-generic, Lemma \ref{uniquetriangulation} says that there is a \textit{unique} triangulation $\mathrm{Fil}_x^\bullet$ of $\rho_x$ with parameter $\delta_x$. So we define the deformation problem
	\[ D^\square_{\rho_x,\mathrm{Fil}^\bullet_x[\frac1t]}: \mathsf{Art}_{L'} \to \mathsf{Set} \]
taking an Artinian local $L'$-algebra $A$ to the set of pairs $(\rho, \mathrm{Fil}^\bullet)$ where $\rho: G_K \to \mathrm{GL}_n(A)$ of $\rho$ lifts $\rho_x$, and $\mathrm{Fil}^\bullet$ is a full filtration (by direct summands) of $\mathbf{D}_{\mathrm{rig}}(\rho)[\frac1t]$ by finite free $R_{A,K}$-modules whose successive quotients are of the form $R_{A,K}(\delta)[\frac1t]$, and which lifts the filtration $\mathrm{Fil}^\bullet_x[\frac1t]$.

\begin{proposition}
The natural forgetful map $D^\square_{\rho_x,\mathrm{Fil}^\bullet_x[\frac1t]} \to D^\square_{\rho_x}$ is injective and relatively representable. We denote the representing ring $R^\square_{\rho_x,\mathrm{Fil}^\bullet_x[\frac1t]}$.
\end{proposition}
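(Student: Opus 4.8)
The plan is to establish the two assertions in turn, the first being a \emph{rigidity of the triangulation up to $t$-isogeny} and the second a Schlessinger-style pro-representability statement. Throughout, let $R^\square_{\rho_x}$ denote the framed deformation ring pro-representing $D^\square_{\rho_x}$ (this is $\mathscr{O}_{\mathcal{X}^\square_{\overline\rho},f_x}^\wedge$). Since $D^\square_{\rho_x,\mathrm{Fil}^\bullet_x[\frac1t]} \to D^\square_{\rho_x}$ will be shown to be a monomorphism of functors, relative representability amounts to showing that its source is pro-represented by a quotient of $R^\square_{\rho_x}$.

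\textbf{Injectivity.} Let $A \in \mathsf{Art}_{L'}$, let $\rho \in D^\square_{\rho_x}(A)$, and let $\mathrm{Fil}^\bullet, \mathrm{Fil}'^\bullet$ be two filtrations of $M := \mathbf{D}_{\mathrm{rig}}(\rho)[\frac1t]$ of the type occurring in the definition of $D^\square_{\rho_x,\mathrm{Fil}^\bullet_x[\frac1t]}$, both reducing to $\mathrm{Fil}^\bullet_x[\frac1t]$ modulo $\mathfrak{m}_A$. I would argue by induction on the length of the flag and on the length of $A$. Using the freeness of $\mathbf{D}_{\mathrm{rig}}$ of an Artinian-coefficient representation and the base-change and exactness properties of $\mathbf{D}_{\mathrm{rig}}(-)$ and of $(\varphi,\Gamma_K)$-cohomology from \cite{KPX}, a small-extension d\'evissage reduces the equality of the rank-one pieces $\mathrm{Fil}^1 \cong R_{A,K}(\delta_1)[\frac1t] \cong \mathrm{Fil}'^1$ inside $M$ to the vanishing of $\mathrm{Hom}_{(\varphi,\Gamma_K)}(R_{L',K}(\delta_1)[\frac1t], M_x/\mathrm{Fil}^1_x[\frac1t])$, where $M_x := \mathbf{D}_{\mathrm{rig}}(\rho_x)[\frac1t]$. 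To see this vanishes, recall (as in the proof of Lemma \ref{uniquetriangulation}, via Proposition \ref{rankonephigamma}) that inverting $t$ absorbs the algebraic part of the parameter, so $R_{L',K}(\delta_1)[\frac1t]$ depends only on $\mathrm{unr}(\alpha_1)$ up to a finite-order character; a nonzero $(\varphi,\Gamma_K)$-equivariant map out of it would have to intertwine the scalar action of $\varphi^f$ by $\alpha_1$, but since $\rho_x$ is benign, hence $\varphi^f$-generic, the eigenvalue $\alpha_1$ does not occur among the $\varphi^f$-eigenvalues of $M_x/\mathrm{Fil}^1_x[\frac1t]$, which forces the map to be zero. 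Hence $\mathrm{Fil}^1 = \mathrm{Fil}'^1$; replacing $M$ by $M/\mathrm{Fil}^1$ (whose $\varphi^f$-eigenvalues $\alpha_2,\dots,\alpha_n$ are still distinct) and iterating gives $\mathrm{Fil}^\bullet = \mathrm{Fil}'^\bullet$, so the forgetful map is a monomorphism.

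\textbf{Relative representability.} By the previous step and pro-representability of $D^\square_{\rho_x}$, it suffices to verify Schlessinger's conditions (H1)–(H4) for $F := D^\square_{\rho_x,\mathrm{Fil}^\bullet_x[\frac1t]}$, which then exhibits $F$ as pro-represented by a quotient $R^\square_{\rho_x,\mathrm{Fil}^\bullet_x[\frac1t]}$ of $R^\square_{\rho_x}$ (a monomorphism of pro-representable functors corresponds to a surjection of rings). Conditions (H1) and (H2) are gluing statements along fiber products $B' \times_B B''$ of Artinian local $L'$-algebras with $B' \twoheadrightarrow B$: a $(\varphi,\Gamma_K)$-module over $R_{B' \times_B B'',K}[\frac1t]$ together with a full filtration by direct summands with graded pieces of the prescribed form is equivalent to compatible such data over $R_{B',K}[\frac1t]$ and $R_{B'',K}[\frac1t]$. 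This follows from the commutation of $\mathbf{D}_{\mathrm{rig}}(-)$ with base change and its exactness \cite{KPX}, the fact that a submodule which is a direct summand Zariski-locally remains so after base change, and the corresponding statement for the graded pieces. Condition (H3) holds because the tangent space of $F$ is a subspace of that of $D^\square_{\rho_x}$, which is finite-dimensional; condition (H4) is then inherited from $D^\square_{\rho_x}$ using that $F \hookrightarrow D^\square_{\rho_x}$ is a monomorphism. Equivalently, one may package this as obstruction theory: for a small extension $B' \twoheadrightarrow B$ with kernel $\mathfrak{a}$ and a fixed lift $\rho'$ of $\rho$, the obstruction to extending a given filtration over $\rho'$ lies in a finite-dimensional $(\varphi,\Gamma_K)$-cohomology group tensored with $\mathfrak{a}$, and by the monomorphism property the set of such extensions is empty or a single point — exactly the input needed for the relative Schlessinger criterion.

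\textbf{Where the difficulty lies.} The genuine content is the $\mathrm{Hom}$-vanishing underlying injectivity, i.e. the interaction of the operation $[\frac1t]$ on rank-one $(\varphi,\Gamma_K)$-modules with the $\varphi^f$-eigenvalues; under the benign (in particular $\varphi^f$-generic) hypothesis this is essentially the computation already carried out for Lemma \ref{uniquetriangulation}. The gluing needed for (H1) and (H2) is routine given the finiteness and base-change theorems of \cite{KPX}; the one point demanding care is that the filtration steps remain \emph{direct summands}, rather than merely saturated sub-$(\varphi,\Gamma_K)$-modules, after base change, which is precisely why the functor is defined using $\mathbf{D}_{\mathrm{rig}}(\rho)[\frac1t]$ and not $\mathbf{D}_{\mathrm{rig}}(\rho)$ itself.
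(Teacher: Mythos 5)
Your proposal is correct in substance, but it is worth noting how it relates to what the paper actually does: the paper's proof is a two-line reduction, observing that $\varphi^f$-genericity of $\rho_x$ forces the parameter $\delta_x$ to be \emph{regular} in the sense of \cite[(3.1)]{HMS}, and then citing \cite[Proposition 3.5]{HMS} together with the fiber-product description $D^\square_{\rho_x,\mathrm{Fil}^\bullet_x[\frac1t]} \cong D^\square_{\rho_x} \times_{D_{\mathbf{D}_{\mathrm{rig}}(\rho_x)}} D_{\mathbf{D}_{\mathrm{rig}}(\rho_x),\mathrm{Fil}^\bullet_x[\frac1t]}$. What you have written is essentially an unwinding of that cited proposition: your $\mathrm{Hom}$-vanishing
\[ \mathrm{Hom}_{(\varphi,\Gamma_K)}\bigl(R_{L',K}(\delta_1)[\tfrac1t],\, (M_x/\mathrm{Fil}^1_x)[\tfrac1t]\bigr) = 0 \]
is exactly the regularity condition of \cite{HMS} in disguise (after inverting $t$ the rank-one pieces see only the unramified parts of the parameters, so non-vanishing would force $\alpha_1 = \alpha_j$ for some $j>1$), and the d\'evissage plus formal gluing is the content of their Proposition 3.5. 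So the two proofs buy the same thing; yours is self-contained where the paper outsources. Two small points of precision: $\varphi^f$ does not literally act by the scalar $\alpha_1$ on $R_{L',K}(\delta_1)[\frac1t]$ (the action is semilinear, and the eigenvalue is only visible via the classification of rank-one $(\varphi,\Gamma_K)$-modules, i.e.\ $\mathrm{Hom}(R(\delta)[\frac1t],R(\delta')[\frac1t])\neq 0$ iff $\delta'/\delta$ is algebraic, which for crystalline parameters means $\alpha=\alpha'$ exactly); and only $\varphi^f$-genericity is used, not the full benign hypothesis, which matches the paper's reduction.
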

\begin{proof}
Since $\rho_x$ is $\varphi^f$-generic, $\delta_x$ is regular in the sense of \cite[(3.1)]{HMS}. The result then follows from \cite[Proposition 3.5]{HMS} and the identification (see \cite[Section 3.6]{LocalModel} for more details)
	\[ D^\square_{\rho_x,\mathrm{Fil}^\bullet_x[\frac1t]} \cong D^\square_{\rho_x} \times_{D_{\mathbf{D}_{\mathrm{rig}}(\rho_x)}} D_{\mathbf{D}_{\mathrm{rig}}(\rho_x),\mathrm{Fil}^\bullet_x[\frac1t]}, \]
\end{proof}

\begin{proposition}\label{triangulinedeformations}
There is an isomorphism
	\[ \mathscr{O}_{\mathcal{X}^\square_{\mathrm{tri},\overline\rho},x}^\wedge \cong R^\square_{\rho_x,\mathrm{Fil}^\bullet_x[\frac1t]} \]
of integral domains.
\end{proposition}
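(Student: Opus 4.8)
The plan is to compare $\mathscr{O}^\wedge_{\mathcal{X}^\square_{\mathrm{tri},\overline\rho},x}$ with the representing ring $R^\square_{\rho_x,\mathrm{Fil}^\bullet_x[\frac1t]}$ of the previous proposition by exhibiting a natural morphism of deformation functors and checking it is an isomorphism. First I would observe that there is a tautological map $R^\square_{\rho_x,\mathrm{Fil}^\bullet_x[\frac1t]} \to \mathscr{O}^\wedge_{\mathcal{X}^\square_{\mathrm{tri},\overline\rho},x}$: a point of the trianguline deformation space near $x$ carries, by the very definition of $\mathcal{X}^\square_{\mathrm{tri},\overline\rho}$ as the Zariski closure of $\mathcal{U}^\square_{\mathrm{tri},\overline\rho}$, a deformation $\rho$ of $\rho_x$ together with a parameter $\delta$ deforming $\delta_x$ in $\mathcal{T}^n_{\mathrm{reg}}$, and since $\rho_x$ is $\varphi^f$-generic, Lemma~\ref{uniquetriangulation} (applied in families, via \cite{HMS}) produces a \emph{unique} triangulation of $\mathbf{D}_{\mathrm{rig}}(\rho)$ with that parameter, hence after inverting $t$ a canonical lift of $\mathrm{Fil}^\bullet_x[\frac1t]$; this is exactly the data classified by $D^\square_{\rho_x,\mathrm{Fil}^\bullet_x[\frac1t]}$. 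Thus the formal neighborhood of $x$ in $\mathcal{X}^\square_{\mathrm{tri},\overline\rho}$ maps to $\mathrm{Spf}\, R^\square_{\rho_x,\mathrm{Fil}^\bullet_x[\frac1t]}$.

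Next I would produce the map in the other direction and check the two are mutually inverse. The functor $D^\square_{\rho_x,\mathrm{Fil}^\bullet_x[\frac1t]}$ injects into $D^\square_{\rho_x} = \mathrm{Spf}\,\mathscr{O}^\wedge_{\mathcal{X}^\square_{\overline\rho},x}$, so a deformation with a triangulation-after-inverting-$t$ gives a point of the formal neighborhood of $f_x$ in $\mathcal{X}^\square_{\overline\rho}$; one must check it lands in $\mathcal{X}^\square_{\mathrm{tri},\overline\rho}$. The key input is that the map $\mathcal{X}^\square_{\mathrm{tri},\overline\rho} \to \mathcal{X}^\square_{\overline\rho} \times_L \mathcal{T}^n_{\mathrm{reg}}$ is a closed immersion and that, because $\rho_x$ is benign (in particular regular, noncritical, and $\varphi^f$-generic), the point $x$ lies in the \emph{open} locus $\mathcal{U}^\square_{\mathrm{tri},\overline\rho}$, so locally around $x$ the trianguline space \emph{is} the honest moduli space of pairs (deformation, triangulation); equivalently one invokes the smoothness/representability results of \cite{BHS}, \cite{HMS}, \cite{LocalModel} that identify $\mathscr{O}^\wedge_{\mathcal{X}^\square_{\mathrm{tri},\overline\rho},x}$ with a fiber product of deformation rings over the $(\varphi,\Gamma_K)$-module deformation functor. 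A triangulation of $\mathbf{D}_{\mathrm{rig}}(\rho)$ itself (not just after inverting $t$) deforming $\mathrm{Fil}^\bullet_x$ exists and is unique by the $\varphi^f$-genericity, so the data of $D^\square_{\rho_x,\mathrm{Fil}^\bullet_x[\frac1t]}$ and the data cutting out $\mathcal{X}^\square_{\mathrm{tri},\overline\rho}$ near $x$ agree; hence both rings pro-represent the same functor on $\mathsf{Art}_{L'}$ and the two maps are inverse. Finally, $R^\square_{\rho_x,\mathrm{Fil}^\bullet_x[\frac1t]}$ is an integral domain (being, by \cite{HMS}/\cite{BHS}, a power series ring over $L'$ — or at least regular — since $\rho_x$ is benign), which gives the ``of integral domains'' clause.

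The main obstacle I anticipate is not the formal-functor bookkeeping but the precise compatibility between ``triangulation of $\mathbf{D}_{\mathrm{rig}}(\rho)$'' and ``triangulation of $\mathbf{D}_{\mathrm{rig}}(\rho)[\frac1t]$'' in families over an Artinian base: over $L'$ one passes freely between them via Lemma~\ref{uniquetriangulation}, but one needs that inverting $t$ does not lose information about the sub-$(\varphi,\Gamma_K)$-modules and their parameters over $A \in \mathsf{Art}_{L'}$, which is where the regularity of $\delta_x$ (in the sense of \cite{HMS}) and the noncriticality of $\rho_x$ are essential. I would handle this by citing the comparison in \cite[Section 3.6]{LocalModel} and the fiber-product description in the preceding proposition, rather than redoing the $(\varphi,\Gamma_K)$-module analysis; the rest is then a short Yoneda argument.
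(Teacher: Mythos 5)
There is a genuine gap at the central step, and it is precisely the point the paper's proof is organized around. You treat $\mathcal{X}^\square_{\mathrm{tri},\overline\rho}$ near $x$ as if it were ``the honest moduli space of pairs (deformation, triangulation)'', so that $\mathscr{O}^\wedge_{\mathcal{X}^\square_{\mathrm{tri},\overline\rho},x}$ pro-represents $D^\square_{\rho_x,\mathrm{Fil}^\bullet_x[\frac1t]}$ by a Yoneda argument. But $\mathcal{X}^\square_{\mathrm{tri},\overline\rho}$ is defined as a Zariski \emph{closure}, and an $A$-point of it near $x$ (for $A$ Artinian) does not tautologically carry a triangulation of parameter $\delta$; in particular your ``tautological map'' in the first paragraph is not defined, and Lemma \ref{uniquetriangulation} (a statement about crystalline representations over a field) does not apply ``in families'' to produce one. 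What the local model theory of \cite{LocalModel} actually gives (Corollary 3.7.8 there) is that $\Spec\mathscr{O}^\wedge_{\mathcal{X}^\square_{\mathrm{tri},\overline\rho},x}$ is identified with a \emph{union of certain irreducible components} of $\Spec R^\square_{\rho_x,\mathrm{Fil}^\bullet_x[\frac1t]}$ --- a priori a proper closed subscheme. So the isomorphism you want is exactly equivalent to the irreducibility of $\Spec R^\square_{\rho_x,\mathrm{Fil}^\bullet_x[\frac1t]}$, and asserting that the two rings ``pro-represent the same functor'' is circular.

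The missing ingredient is where noncriticality actually enters. In \cite{LocalModel} the irreducible components of $\Spec R^\square_{\rho_x,\mathrm{Fil}^\bullet_x[\frac1t]}$ are indexed by $\{w \in W : w \succeq w_x\}$ for the Bruhat order on $W = \prod_{\tau} \Sigma_n$, where $w_x$ records the relative position of $\mathrm{Fil}^\bullet_x[\frac1t]$ and the Hodge filtration on $\mathbf{D}_{\mathrm{dR}}(\rho_x)$; noncriticality of $\rho_x$ forces $w_x$ to be the longest element, hence the index set is a singleton and the deformation space is irreducible, which combined with Corollary 3.7.8 and the reducedness statement of \cite[Theorem 3.6.2(a)]{LocalModel} gives the claim. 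In your write-up noncriticality is invoked only for the (comparatively mild) comparison of triangulations before and after inverting $t$, which is not its role here. A smaller issue: your parenthetical claim that $R^\square_{\rho_x,\mathrm{Fil}^\bullet_x[\frac1t]}$ is a power series ring over $L'$ is unjustified and not needed; reduced plus irreducible suffices for ``integral domain''.
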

\begin{proof}
Let $G = \Spec L' \times_{\Spec \mathbf{Q}_p} \mathrm{Res}_{K/\mathbf{Q}_p} \mathrm{GL}_{n,K} \cong \prod_{\tau: K \hookrightarrow L'} \mathrm{GL}_{n,L'}$ with Weyl group $W = \prod_{\tau: K \hookrightarrow L'} \Sigma_n$. In \cite{LocalModel}, the authors show that one can associate to $(\rho_x,\mathrm{Fil}^\bullet_x[\frac1t])$ an element $w_x \in W$ such that the irreducible components of $\Spec R^\square_{\rho_x,\mathrm{Fil}^\bullet_x[\frac1t]}$ are in bijection with the set $\{w \in W : w \succeq w_x\}$, where $\succeq$ denotes the Bruhat ordering on $W$ (see Theorem 3.6.2 and the proof of Corollary 4.3.2). Roughly speaking, $w_x$ measures the relative position of $\mathrm{Fil}^\bullet_x[\frac1t]$ with respect to the Hodge filtration on $\mathbf{D}_{\mathrm{dR}}(\rho_x)$.

However, the assumption that $\rho_x$ is noncritical ensures that $w_x$ is the maximal element for the Bruhat ordering, i.e. each $w_{x,\tau}$ is the order-reversing permutation. Therefore, $\Spec R^\square_{\rho_x,\mathrm{Fil}^\bullet_x[\frac1t]}$ is irreducible, so \cite[Corollary 3.7.8]{LocalModel} exactly says that
	\[ \mathscr{O}_{\mathcal{X}^\square_{\mathrm{tri},\overline\rho},x}^\wedge \cong R^\square_{\rho_x,\mathrm{Fil}^\bullet_x[\frac1t]}, \]
and \cite[Theorem 3.6.2(a)]{LocalModel} says that this ring is reduced.
\end{proof}

\subsection{Proof of Density}

Now we prove the main theorem.

\begin{definition}
Let
	\[ \mathcal{X}^\square_{\mathrm{reg}-\mathrm{cris}} = \{x \in \mathcal{X}^\square_{\overline\rho} : \rho_x \text{ is regular crystalline}\} \]
and let $\overline{\mathcal{X}}^\square_{\mathrm{reg}-\mathrm{cris}}$ be its Zariski closure in $\mathcal{X}_{\overline\rho}^\square$. Let
	\[ \mathcal{X}_{\mathrm{cr}} = \{(f',\delta') \in \mathcal{X}_{\mathrm{tri},\overline\rho}^\square : \rho_{f'} \text{ is regular crystalline, $\varphi^f$-generic, and noncritical} \} \]
\end{definition}

We will need the following result about density of crystalline points in the trianguline deformation space:

\begin{proposition}[{{\cite[Proposition 4.1.4]{LocalModel}}}]\label{densityeigenvariety}
Suppose $x = (f_x,\delta_x) \in \mathcal{X}_{\mathrm{tri},\overline\rho}^\square$ is benign. Then there exists an affinoid open neighborhood $\mathcal{U} \subseteq \mathcal{X}^\square_{\mathrm{tri},\overline\rho}$ of $x$ such that $\mathcal{U} \cap \mathcal{X}_{\mathrm{cr}}$ is Zariski dense in $\mathcal{U}$.
\end{proposition}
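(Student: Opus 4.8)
The plan is to combine the local description of $\mathcal{X}^\square_{\mathrm{tri},\overline\rho}$ at the benign point $x$ furnished by Proposition \ref{triangulinedeformations} with the behaviour of the parameter map
\[ \omega\colon \mathcal{X}^\square_{\mathrm{tri},\overline\rho}\longrightarrow \mathcal{T}^n_{\mathrm{reg}},\qquad (f,\delta)\mapsto\delta, \]
i.e.\ the restriction to $\mathcal{X}^\square_{\mathrm{tri},\overline\rho}$ of the projection off the factor $\mathcal{X}^\square_{\overline\rho}$. By Proposition \ref{triangulinedeformations} the completed local ring $\mathscr{O}^\wedge_{\mathcal{X}^\square_{\mathrm{tri},\overline\rho},x}$ is $R^\square_{\rho_x,\mathrm{Fil}^\bullet_x[\frac1t]}$, and since $\rho_x$ is benign the element $w_x\in W$ appearing in the proof of Proposition \ref{triangulinedeformations} is the longest element, so the ``big cell'' case of the local model of \cite{LocalModel} applies: that ring is a formal power series ring over the residue field, and under this identification $\omega$ becomes a smooth morphism near $x$. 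The first step is therefore just to extract from \cite{BHS}, \cite{LocalModel} the two facts that (i) $\mathcal{X}^\square_{\mathrm{tri},\overline\rho}$ is smooth over $L$ at $x$ and (ii) $\omega$ is smooth at $x$.

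Granting this, I would proceed as follows. Choose an irreducible affinoid open $\mathcal{U}\ni x$ in $\mathcal{X}^\square_{\mathrm{tri},\overline\rho}$ small enough that $\omega|_{\mathcal{U}}$ is smooth (hence flat and open), that every point of $\mathcal{U}$ has noncritical triangulation of the same relative position $w_x$, and that ``$\varphi^f$-generic'' holds throughout $\mathcal{U}$ --- these are open conditions valid at the benign point $x$, so such a $\mathcal{U}$ exists, and $\omega(\mathcal{U})$ is then open in $\mathcal{T}^n_{\mathrm{reg}}$. Let $\Theta\subset\mathcal{T}^n_{\mathrm{reg}}$ be the set of parameters $\delta=(\delta_1,\dots,\delta_n)$ with each $\delta_i=(\cdot)^{\mathbf{h}_i}\mathrm{unr}(\alpha_i)$ for some strictly dominant and regular $\mathbf{h}_i$; since the algebraic characters are Zariski dense in weight space --- they accumulate at points of any component --- and the unramified twists fill out the remaining $\mathbf{G}_m$-factors, $\Theta$ is Zariski dense in every open subset of $\mathcal{T}^n_{\mathrm{reg}}$, in particular in $\omega(\mathcal{U})$ (note $\delta_x\in\Theta$). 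As $\omega|_{\mathcal{U}}$ is flat and open, $\omega^{-1}(\Theta)\cap\mathcal{U}$ is Zariski dense in $\mathcal{U}$. Finally, any $(f',\delta')\in\omega^{-1}(\Theta)\cap\mathcal{U}$ has $\rho_{f'}$ trianguline with each graded piece of its (unique, by $\varphi^f$-genericity) triangulation a crystalline rank-one $(\varphi,\Gamma_K)$-module; noncriticality of the triangulation on $\mathcal{U}$ forces $\rho_{f'}$ to be de Rham, and since $\delta'$ has no ramified part $\rho_{f'}$ is crystalline, with distinct Hodge--Tate weights because $\mathbf{h}'$ is regular. Thus $\rho_{f'}$ is regular crystalline, and by the choice of $\mathcal{U}$ it is also $\varphi^f$-generic and noncritical, so $(f',\delta')\in\mathcal{X}_{\mathrm{cr}}$. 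Hence $\omega^{-1}(\Theta)\cap\mathcal{U}\subseteq\mathcal{U}\cap\mathcal{X}_{\mathrm{cr}}$ is Zariski dense in $\mathcal{U}$.

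The main obstacle is steps (i)--(ii): smoothness of $\mathcal{X}^\square_{\mathrm{tri},\overline\rho}$ and of $\omega$ at a benign point is exactly the technical payoff of the local model for the trianguline variety, and without it one cannot control which parameters are actually attained near $x$. A secondary point requiring care is the implication ``trianguline with noncritical parameter of crystalline type $\Rightarrow$ regular crystalline'', where the noncriticality and $\varphi^f$-genericity built into $\mathcal{U}$ are genuinely used; this goes back to \cite{BeCh} and is the mechanism exploited by \cite{Nak}. Since both ingredients are already available, in practice the proof is the citation of \cite[Proposition 4.1.4]{LocalModel}, and the outline above records how that result is obtained.
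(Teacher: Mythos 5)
The paper gives no proof of this proposition at all --- it is quoted verbatim from \cite[Proposition 4.1.4]{LocalModel} --- and your proposal correctly identifies that the citation \emph{is} the proof, so you are taking essentially the same approach. Your additional sketch is a fair account of the standard accumulation argument behind the cited result (smoothness of $\mathcal{X}^\square_{\mathrm{tri},\overline\rho}$ and of the parameter map at benign points, plus density of dominant algebraic parameters), with the one caveat that the step deducing that $\rho_{f'}$ is de Rham really rests on a numerical small-slope condition on the parameter $\delta'$ rather than on ``noncriticality'' as such, which in Definition \ref{benign} is only defined for representations already known to be crystalline.
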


\begin{proposition}\label{RegularCrystallineDense}
$\overline{\mathcal{X}}^\square_{\mathrm{reg}-\mathrm{cris}}$ is a union of irreducible components of $\mathcal{X}^\square_{\overline\rho}$.
\end{proposition}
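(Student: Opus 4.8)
The plan is to run the ``infinite fern'' argument of Gouv\^ea--Mazur, Chenevier and Nakamura, but feeding it the trianguline deformation space of \cite{BHS} (and Propositions \ref{triangulinedeformations} and \ref{densityeigenvariety}) in place of Nakamura's finite slope subspace, so that the hypothesis $\mathrm{End}_{k_L[G_K]}(\overline\rho) = k_L$ never intervenes. Since $\mathcal{X}^\square_{\overline\rho}$ is normal with finitely many components (Theorems \ref{normal} and \ref{maintheorem}), its irreducible components coincide with its connected components and are therefore open and closed; hence a closed analytic subset is a union of irreducible components exactly when it contains, for each of its points, the whole component through that point. As $\overline{\mathcal{X}}^\square_{\mathrm{reg}-\mathrm{cris}}$ is closed by construction and is the Zariski closure of a set of regular crystalline points, it suffices to prove: (1) every irreducible component $Z$ of $\mathcal{X}^\square_{\overline\rho}$ that meets $\mathcal{X}^\square_{\mathrm{reg}-\mathrm{cris}}$ in fact contains a \emph{benign} point; and (2) if $x$ is benign, then $\overline{\mathcal{X}}^\square_{\mathrm{reg}-\mathrm{cris}}$ contains the whole component $Z$ of $\mathcal{X}^\square_{\overline\rho}$ through $x$. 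Granting these, $\overline{\mathcal{X}}^\square_{\mathrm{reg}-\mathrm{cris}}$ is the union of those (finitely many) components that meet $\mathcal{X}^\square_{\mathrm{reg}-\mathrm{cris}}$.

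For (1): fix a regular crystalline $\rho_x$ with residue field $L'$, and consider the closed subspace of $\mathcal{X}^\square_{\overline\rho}$ (equivalently, the appropriate quotient of $\mathscr{O}^\wedge_{\mathcal{X}^\square_{\overline\rho},x}$) cutting out crystalline lifts of $\overline\rho$ with the same Hodge--Tate type as $\rho_x$. By Kisin's theory of crystalline deformation rings this is reduced, equidimensional, and formally smooth over $L'$ of positive dimension $n^2 + \binom{n}{2}[K:\mathbf{Q}_p]$, and it is an integral neighbourhood of $x$ sitting inside the single component $Z$. On it the characteristic polynomial of $\varphi^f$ on $\mathbf{D}_{\mathrm{cris}}$ varies analytically, so being $\varphi^f$-generic, being noncritical, and the condition $\alpha_i \neq p^{\pm f}\alpha_j$ each cut out a Zariski-open subset; these are simultaneously nonempty by the existence of benign crystalline lifts (as in \cite{EG}, which is anyway used to produce the explicit crystalline point in each component in the proof of Theorem \ref{secondref}). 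Any point of the resulting nonempty open is benign and lies in $Z$, giving (1).

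For (2): let $x$ be benign, with $\rho_x$ having distinct $\varphi^f$-eigenvalues $\alpha_1,\dots,\alpha_n$ and regular Hodge--Tate weights. By Lemma \ref{uniquetriangulation} the refinements of $\rho_x$ are indexed by $\sigma \in \Sigma_n$, each giving a parameter $\delta_\sigma \in \mathcal{T}_{\mathrm{reg}}^n(L')$ and a point $x_\sigma = (f_x,\delta_\sigma)$ of $\mathcal{X}^\square_{\mathrm{tri},\overline\rho}$ that is again benign in the sense of Proposition \ref{densityeigenvariety}. By Proposition \ref{triangulinedeformations} the ring $\mathscr{O}^\wedge_{\mathcal{X}^\square_{\mathrm{tri},\overline\rho},x_\sigma}$ is a domain, and by \cite{BHS} it is formally smooth over $L'$ of dimension $n^2 + \binom{n+1}{2}[K:\mathbf{Q}_p]$; moreover the forgetful map $\mathcal{X}^\square_{\mathrm{tri},\overline\rho} \to \mathcal{X}^\square_{\overline\rho}$, $(f,\delta)\mapsto f$, has finite fibres near $x_\sigma$ (a benign trianguline representation has only finitely many regular triangulations). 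Hence the ``leaf'' $\mathcal{Y}_\sigma$ $:=$ the image of a neighbourhood of $x_\sigma$, is an integral subspace of $\mathcal{X}^\square_{\overline\rho}$ through $x$ of dimension $n^2 + \binom{n+1}{2}[K:\mathbf{Q}_p]$, and since the forgetful map carries $\mathcal{X}_{\mathrm{cr}}$ into the regular crystalline locus of $\mathcal{X}^\square_{\overline\rho}$, Proposition \ref{densityeigenvariety} gives $\mathcal{Y}_\sigma \subseteq \overline{\mathcal{X}}^\square_{\mathrm{reg}-\mathrm{cris}}$. For $n \ge 2$ this leaf has positive codimension $\binom{n}{2}[K:\mathbf{Q}_p]$ inside $\mathcal{X}^\square_{\overline\rho}$, whose dimension is $n^2([K:\mathbf{Q}_p]+1)$ by Proposition \ref{dimension}, so no single leaf suffices; instead one runs the self-similar fern exactly as in \cite{Nak}: each $\mathcal{Y}_\sigma$ contains infinitely many further benign crystalline points, at each of which $n!$ new leaves emanate, and Chenevier's recursive dimension estimate shows that the Zariski closure of the union of all these leaves --- which still lies in $\overline{\mathcal{X}}^\square_{\mathrm{reg}-\mathrm{cris}}$ --- meets $Z$ in a closed subset of dimension $\dim Z$. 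Since $Z$ is irreducible (normality), this subset is all of $Z$, which is (2).

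The main obstacle is step (2): transcribing Nakamura's recursive fern computation --- and the underlying combinatorial/dimension lemma of Chenevier --- into the framed setting, and verifying that all of its inputs from \cite{BHS} and \cite{LocalModel} (smoothness of $\mathcal{X}^\square_{\mathrm{tri},\overline\rho}$ at benign points with the stated dimension, finiteness of the set of regular triangulations, and density of crystalline points, i.e.\ Proposition \ref{densityeigenvariety}) are genuinely available for a general base field $K$ under the running hypotheses. The single new point relative to \cite{Nak}, namely the passage from unframed to framed deformation functors, costs nothing here because the trianguline variety of \cite{BHS} is framed from the outset; so beyond careful bookkeeping no new ideas are required.
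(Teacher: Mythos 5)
Your overall architecture is the same as the paper's: reduce to showing that through a benign point the trianguline ``leaves'' $\mathcal{Y}_\sigma$ (one per refinement $\sigma \in \Sigma_n$), each of which lies in $\overline{\mathcal{X}}^\square_{\mathrm{reg}-\mathrm{cris}}$ by Propositions \ref{triangulinedeformations} and \ref{densityeigenvariety}, force $\overline{\mathcal{X}}^\square_{\mathrm{reg}-\mathrm{cris}}$ to swallow the whole irreducible component. Your dimension counts for the leaves and their codimension $\binom{n}{2}[K:\mathbf{Q}_p]$ are correct, and your step (1) is essentially the paper's use of Kisin's crystalline deformation space together with \cite[Lemma 4.2]{Nak} (note that for nonemptiness of the benign locus you really want a \emph{local} density statement on the crystalline deformation space of $\rho_x$, which is what Nakamura's lemma provides, rather than the global existence of benign lifts of $\overline\rho$ from \cite{EG}).

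The gap is in step (2), which you yourself flag as ``the main obstacle'' and then leave to ``Chenevier's recursive dimension estimate.'' That is precisely the step that has to be proved, and the mechanism is not a recursion at all: no iteration of the fern is needed. The actual argument (in Chenevier, Nakamura, and the paper alike) is a single tangent-space computation at one benign point $x$ that is moreover a \emph{smooth} point of $\mathcal{X}^\square_{\overline\rho}$ --- smoothness follows from benignity, since $\alpha_i \neq p^{\pm f}\alpha_j$ forces $H^2(G_K,\ad\rho_x) = 0$, a point you omit but need in order to identify $\dim T_x$ with the dimension of the component. One then shows that the map
\[ \bigoplus_{\sigma \in \Sigma_n} T_{y_\sigma}\mathcal{Y}_\sigma \longrightarrow T_x\,\mathcal{X}^\square_{\overline\rho} \]
is \emph{surjective}; since each $T_{y_\sigma}\mathcal{Y}_\sigma$ lands in $T_x\mathcal{Z}$ for $\mathcal{Z}$ the relevant component of $\overline{\mathcal{X}}^\square_{\mathrm{reg}-\mathrm{cris}}$, this gives $T_x\mathcal{Z} = T_x\mathcal{V}$ and hence $\mathcal{Z} = \mathcal{V}$ in one stroke. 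The surjectivity is the genuine content: on the unframed level it is \cite[Corollary 3.13]{HMS} (the sum over all refinements of the tangent spaces of the triangulation-preserving deformation functors exhausts $T(D_{\rho_x})$), and the passage to the framed setting is handled by writing $T(R^\square_{\rho_x}) = T(D_{\rho_x}) \oplus V^\square$ with $V^\square$ the framing directions and noting that each $D^\square_{\rho_x,\mathrm{Fil}^\bullet_\sigma[\frac1t]}$ is the fibre product $D^\square_{\rho_x}\times_{D_{\rho_x}}D_{\rho_x,\mathrm{Fil}^\bullet_\sigma[\frac1t]}$, so each summand already surjects onto $V^\square$. Without this surjectivity statement (or an equivalent), the closure of the union of iterated leaves could a priori stabilize at a proper closed subset, so your step (2) as written does not go through.
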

\begin{proof}
Let $\mathcal{Z}$ be an irreducible component of $\overline{\mathcal{X}}^\square_{\mathrm{reg}-\mathrm{cris}}$. Note the singular locus $\mathcal{Z}_{\mathrm{sing}} \subset \mathcal{Z}$ is a proper Zariski closed subset, so its complement $\mathcal{U}$ is an admissible open in $\mathcal{Z}$. Thus, we may pick a smooth point $x \in \mathcal{U}$ with corresponding representation $\rho_x: G_K \to \mathrm{GL}_n(L')$, and by density we assume $x \in \mathcal{X}^\square_{\mathrm{reg}-\mathrm{cris}}$. By Corollary 2.7.7\footnote{In fact, we use a slight modification: we first use the space of semi-stable deformations with fixed Hodge-Tate weights constructed in \cite[Corollary 2.6.2]{Kis08}, and then consider the zero locus of the monodromy operator $N$.} in \cite{Kis08}, there is a Zariski closed subspace $\mathcal{X}_{\overline\rho,\mathrm{cris}}^{\square,\mathbf{k}_x} \subset \mathcal{X}_{\overline\rho}^\square$ consisting of the crystalline representations with Hodge-Tate weights $\mathbf{k}_x$, where $\mathbf{k}_x$ denotes the Hodge-Tate weights of $\rho_x$, as above. By \cite[Lemma 4.2]{Nak} (wit 	h $U = \mathcal{U} \cap \mathcal{X}_{\overline\rho,\mathrm{cris}}^{\square,\mathbf{k}_x}$) we may assume $\rho_x$ is benign.

In fact, $x$ is smooth in $\mathcal{X}_{\overline\rho}^\square$. To see this, note that
	\[ \mathscr{O}_{\mathcal{X}_{\overline\rho}^\square,x}^\wedge \cong R_{\rho_x}^\square, \]
where $R_{\rho_x}^\square$ is the universal deformation ring of the framed deformation problem for $\rho_x$. Thus, it suffices to show that $H^2(G_K,\ad\rho_x) = 0$, which is the same as showing that $\mathrm{Hom}_{\kappa(\rho_x)[G_K]}(\rho_x,\rho_x\otimes\epsilon) = 0$ by local Tate duality. But a morphism $g: \rho_x \to \rho_x \otimes \epsilon$ induces a $\varphi^f$-equivariant map $\mathbf{D}_{\mathrm{cris}}(\rho_x) \to \mathbf{D}_{\mathrm{cris}}(\rho_x \otimes \epsilon)$. If $\{\alpha_i\}_i$ denotes the set of (distinct) eigenvalues of $\varphi^f$ on $\mathbf{D}_{\mathrm{cris}}(\rho_x)$, then the eigenvalues of $\varphi^f$ on $\mathbf{D}_{\mathrm{cris}}(\rho_x \otimes \epsilon)$ are exactly $\{p^f\alpha_i\}_i$. But $\rho_x$ is benign, so in particular, $\alpha_i \neq p^{\pm f}\alpha_j$ for $i \neq j$, and thus $g = 0$.

The irreducible set $\mathcal{Z}$ admits a closed immersion $\mathcal{Z} \hookrightarrow \mathcal{V}$ where $\mathcal{V}$ is an irreducible component of $\mathcal{X}_{\overline\rho}^\square$. We wish to show that this map is an equality, for which it suffices to show that $\dim \mathcal{Z} = \dim \mathcal{V}$. Since dimension can be computed as the dimension of the tangent space at smooth points, it suffices to show that the natural injection
	\[ T_x \mathcal{Z} \hookrightarrow T_x \mathcal{X}^\square_{\overline\rho} = T_x \mathcal{V} \]
is an isomorphism.

As noted above, the triangulations of $\rho_x$ are exactly parametrized by $\sigma \in \Sigma_n$, and we write them as $\mathrm{Fil}^\bullet_\sigma$. Each pair $(x,\mathrm{Fil}^\bullet_\sigma)$ defines a point $y_\sigma \in \mathcal{X}_{\mathrm{tri},\overline\rho}^\square$. Since $\rho_x$ is benign, Proposition \ref{triangulinedeformations} says that $\mathscr{O}_{\mathcal{X}^\square_{\mathrm{tri},\overline\rho},y_\sigma}^\wedge \cong R^\square_{\rho_x,\mathrm{Fil}^\bullet_\sigma[\frac1t]}$ is an integral domain. This implies that there is a unique irreducible component of $\mathcal{X}^\square_{\mathrm{tri},\overline\rho}$ containing $y_\sigma$, which we call $\mathcal{Y}_\sigma$. By Proposition \ref{densityeigenvariety}, there exists an affinoid open neighborhood $\mathcal{U}_\sigma$ of $y_\sigma$ such that $\mathcal{U}_\sigma \cap \mathcal{X}_{\mathrm{cr}}$ is dense in $\mathcal{U}_\sigma$. But then $\mathcal{U}_\sigma \cap \mathcal{X}_{\mathrm{cr}} \cap \mathcal{Y}_\sigma$ is dense in $\mathcal{U}_\sigma \cap \mathcal{Y}_\sigma$, which is a nonempty open and thus dense in $\mathcal{Y}_\sigma$. Thus $\mathcal{X}_{\mathrm{cr}} \cap \mathcal{Y}_\sigma$ is dense in $\mathcal{Y}_\sigma$.

Under the natural projection
	\[ \mathcal{X}_{\mathrm{tri},\overline\rho}^\square \to \mathcal{X}_{\overline\rho}^\square \times_L \mathcal{T}_{\mathrm{reg}}^n \to \mathcal{X}_{\overline\rho}^\square, \]
the subset $\mathcal{X}_{\mathrm{cr}}$ lands in $\mathcal{X}_{\mathrm{reg}-\mathrm{cris}}$ and $\mathcal{Y}_\sigma$ lands in $\mathcal{V}$. By density, this descends to a map $\mathcal{Y}_\sigma \to \mathcal{Z}$, taking $y_\sigma \mapsto x$. Therefore, by considering all $\sigma \in \Sigma_n$, we get maps
	\[ \bigoplus_{\sigma \in \Sigma_n} T(R^\square_{\rho_x,\mathrm{Fil}^\bullet_\sigma[\frac1t]}) = \bigoplus_{\sigma \in \Sigma_n} T_{y_\sigma} \mathcal{Y}_\sigma \to T_x \mathcal{Z} \hookrightarrow T_x \mathcal{V} \cong T(R_{\rho}^\square). \]
which are actually induced by the forgetful maps $D_{\rho_x,\mathrm{Fil}^\bullet_\sigma[\frac1t]}^\square \to D_{\rho_x}^\square$.

Let $D_{\rho_x}$ and $D_{\rho_x,\mathrm{Fil}^\bullet_\sigma[\frac1t]}$ be the unframed deformation functors of $\rho_x$ and $(\rho_x,\mathrm{Fil}^\bullet_\sigma[\frac1t])$ respectively. Let $\pi_\sigma: D_{\rho_x,\mathrm{Fil}^\bullet_\sigma[\frac1t]} \to D_{\rho_x}$ denote the map forgetting the filtration. Then $D^\square_{\rho_x} \to D_{\rho_x}$ is formally smooth of relative dimension $e = n^2 - \dim H^0(G_K,\ad\rho_x)$. So we may write $T(R^\square_{\rho_x}) = T(D_{\rho_x}) \oplus V^\square$ where $V^\square$ is some $e$-dimensional $L'$-vector space, and such that
	\[ T(R^\square_{\rho_x}) = T(D_{\rho_x}) \oplus V^\square \twoheadrightarrow T(D_{\rho_x}) \]
is just the natural projection map. There is a canonical isomorphism of functors
	\[ D^\square_{\rho_x,\mathrm{Fil}^\bullet_\sigma[\frac1t]} \cong D^\square_{\rho_x} \times_{D_{\rho_x}} D_{\rho_x,\mathrm{Fil}^\bullet_\sigma[\frac1t]}, \]
so the map $\bigoplus_{\sigma \in \Sigma_n} T(R^\square_{\rho_x,\mathrm{Fil}^\bullet_\sigma[\frac1t]}) \to T(R^\square_{\rho_x})$ factors as
	\[ \bigoplus_{\sigma \in \Sigma_n} T(R^\square_{\rho_x,\mathrm{Fil}^\bullet_\sigma[\frac1t]}) \cong \bigoplus_{\sigma \in \Sigma_n} (T(D_{\rho_x,\mathrm{Fil}^\bullet_\sigma[\frac1t]}) \oplus V^\square) \xrightarrow{\sum (T(\pi_\sigma) \oplus \mathrm{id}_{V^\square})} T(D_{\rho_x}) \oplus V^\square \cong T(R^\square_{\rho_x}) \]
By \cite[Corollary 3.13]{HMS}, $\sum_{\sigma \in \Sigma_n} T(\pi_\sigma)$ is surjective, and the above map $\bigoplus_{\sigma \in \Sigma_n} V^\square \xrightarrow{\sum \mathrm{id}_{V^\square}} V^\square$ is clearly surjective, so we conclude that the map is surjective, which is exactly what we wanted. Thus, $T_x \mathcal{Z} \hookrightarrow T_x \mathcal{V}$ is an isomorphism.
\end{proof}

\begin{theorem}\label{SecondMainTheorem}
Under the conditions described in the statement of Theorem \ref{secondref},
	\[ \overline{\mathcal{X}}^\square_{\mathrm{reg}-\mathrm{cris}} = \mathcal{X}^\square_{\overline\rho} \]
\end{theorem}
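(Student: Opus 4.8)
The plan is to read this off the two ingredients already in place. Proposition \ref{RegularCrystallineDense} says $\overline{\mathcal{X}}^\square_{\mathrm{reg}-\mathrm{cris}}$ is a union of irreducible components of $\mathcal{X}^\square_{\overline\rho}$, and Theorem \ref{maintheorem} identifies those components as the $\mathcal{X}_\zeta$ for $\zeta \in \mu_q(L)$ (using Remark \ref{caveats}(3) to pass between the rigid and scheme-theoretic pictures; when $\mu_{p^\infty}(K) = \{1\}$ one has $q = 1$ and a single, formally smooth component). Moreover, by the identity $\det(\widetilde{X}_1)^q = 1$ from the discussion preceding Theorem \ref{maintheorem} together with Remark \ref{lubintate}, a closed point $x$ lies on $\mathcal{X}_\zeta$ precisely when $\det\rho_x(\mathrm{rec}_K(\zeta_K)) = \zeta$, where $\zeta_K$ is our fixed primitive $q$-th root of unity, chosen so that $\mathrm{rec}_K(\zeta_K) = g_1$. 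Hence it suffices to exhibit, for every $\zeta \in \mu_q(L)$, a \emph{regular crystalline} point on $\mathcal{X}_\zeta$: such a point lies in $\mathcal{X}^\square_{\mathrm{reg}-\mathrm{cris}}$, so its component is among those comprising $\overline{\mathcal{X}}^\square_{\mathrm{reg}-\mathrm{cris}}$, and exhausting the components yields $\overline{\mathcal{X}}^\square_{\mathrm{reg}-\mathrm{cris}} = \mathcal{X}^\square_{\overline\rho}$.

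First I would fix one regular crystalline lift of the trivial representation, for instance $\rho_0 := \bigoplus_{i=0}^{n-1} \epsilon^{\,i(p-1)}$: it is crystalline, it reduces to the trivial $n$-dimensional representation since $\bar\epsilon$ takes values in $\mathbf{F}_p^\times$, and at every embedding its labeled Hodge--Tate weights are the distinct integers $0, p-1, \dots, (n-1)(p-1)$, so it is regular. Set $\zeta_0 := \det\rho_0(\mathrm{rec}_K(\zeta_K)) \in \mu_q(L)$. Next, for a uniformizer $\varpi_K$ of $K$, recall that the Lubin--Tate character $\chi_{\varpi_K} : G_K^{\mathrm{ab}} \to \mathscr{O}_K^\times$ is crystalline, with $\chi_{\varpi_K} \circ \mathrm{rec}_K$ restricting to $u \mapsto u^{\pm 1}$ on $\mathscr{O}_K^\times$ and trivial on $\varpi_K$. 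Fixing an embedding $\tau_0 : K \hookrightarrow L$ (allowed since $L$ contains the Galois closure of $K$), the characters $\psi_m := (\tau_0 \circ \chi_{\varpi_K})^{(p^f - 1)m}$ for $m \in \mathbf{Z}$ are crystalline, have trivial reduction because $\#k_K^\times = p^f - 1$, and satisfy $\psi_m(\mathrm{rec}_K(\zeta_K)) = \tau_0(\zeta_K)^{\pm(p^f - 1)m}$; since $\tau_0(\zeta_K)$ has exact order $q$ and $\gcd(p^f - 1, q) = 1$ (as $q$ is a power of $p$), these values run over all of $\mu_q(L)$ as $m$ varies.

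To assemble the point: given $\zeta \in \mu_q(L)$, the hypothesis $p > n$ forces $\gcd(n, q) = 1$, so I can choose $\xi \in \mu_q(L)$ with $\xi^n = \zeta \zeta_0^{-1}$, then pick $m$ with $\psi_m(\mathrm{rec}_K(\zeta_K)) = \xi$, and set $\rho := \psi_m \otimes \rho_0$. Then $\rho$ is crystalline, reduces to the trivial $n$-dimensional representation, and so defines a closed point of $\mathcal{X}^\square_{\overline\rho}$; it is regular, because tensoring with the character $\psi_m$ translates each collection of $\tau$-labeled Hodge--Tate weights of $\rho_0$ by a single integer and hence preserves distinctness; and $\det\rho(\mathrm{rec}_K(\zeta_K)) = \psi_m(\mathrm{rec}_K(\zeta_K))^n\,\det\rho_0(\mathrm{rec}_K(\zeta_K)) = \xi^n \zeta_0 = \zeta$, so $\rho$ lies on $\mathcal{X}_\zeta$. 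Running $\zeta$ over $\mu_q(L)$ finishes the proof; in the degenerate case $q = 1$ the single point $\rho_0$ already suffices.

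Since the essential geometric content is packaged in Proposition \ref{RegularCrystallineDense}, the main obstacle in this final step is careful bookkeeping rather than a new idea: one must fix the normalizations of $\mathrm{rec}_K$ and of Hodge--Tate weights used in the paper, verify that $\chi_{\varpi_K}$ really is crystalline with the stated effect on $\mathscr{O}_K^\times$ and on $\varpi_K$ (so that $\psi_m$ is crystalline and $\bar\psi_m$ trivial), and confirm the description of $\mathcal{X}_\zeta$ as the locus $\{\det\rho_x(\mathrm{rec}_K(\zeta_K)) = \zeta\}$. A less self-contained alternative to the explicit $\rho_0$ is to invoke \cite{EG} for the existence of \emph{some} regular crystalline lift of the trivial representation and then twist by $\psi_m$ as above; the twist is genuinely needed, since an unramified crystalline character is trivial on $\mathrm{rec}_K(\zeta_K)$, so only ramified (Lubin--Tate) characters can move a point between the components $\mathcal{X}_\zeta$.
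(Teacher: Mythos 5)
Your argument is correct and follows essentially the same route as the paper: reduce via Proposition \ref{RegularCrystallineDense} to exhibiting a regular crystalline point on each component $\mathcal{X}_\zeta$, and manufacture such points from crystalline characters (algebraic on $\mathscr{O}_K^\times$, trivial on $\varpi_K$) raised to the power $p^f-1$, so that the reduction is trivial while the value at $\mathrm{rec}_K(\zeta_K)$ stays a primitive $q$th root of unity. The only difference is cosmetic: the paper takes $\rho_m = \bigl(\bigoplus_{i=1}^{n-1}\chi_0^{\otimes i}\bigr)\oplus\chi_0^{\otimes(n+m)}$ and varies the last exponent to sweep out $\mu_q(L)$ without needing $\gcd(n,q)=1$, whereas you twist a fixed $\rho_0$ by a Lubin--Tate character and therefore invoke $p>n$ to solve $\xi^n=\zeta\zeta_0^{-1}$ --- both are legitimate under the stated hypotheses.
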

\begin{proof}
By Proposition \ref{RegularCrystallineDense}, it suffices to show the existence of a regular crystalline point in each irreducible component of $\mathcal{X}^\square_{\overline\rho}$: by restricting algebraic characters to $\mathscr{O}_K^\times$, we just construct them explicitly.

Let $\mathrm{rec}_K: \widehat{K^\times} \xrightarrow\sim G_K^{\mathrm{ab}}$ be the local reciprocity map, normalized so that $\varpi_K$ is sent to a lift of the geometric Frobenius. Let $I = \{i \in \mathbb{Z} : 0 < i < q \text{ and } \gcd(i,q) = 1\}$ and $J = \{1,\dots,|\mathrm{Hom}_{\mathbf{Q}_p(\mu_q)}(K,L)|\}$, and pick some bijection $\iota: \mathrm{Hom}(K,L) \xrightarrow\sim I \times J$ such that if $\tau_1|_{\mathbf{Q}_p(\mu_q)} = \tau_2|_{\mathbf{Q}_p(\mu_q)}$, then $p_I(\iota(\tau_1)) = p_I(\iota(\tau_2))$, where $p_I: I \times J \to I$ is the projection. Then for $\mathbf{h} = (h_{i,j}) \in \mathbf{Z}_{> 0}^{I \times J}$, we define the character $\chi^\mathbf{h}: G_K^{\mathrm{ab}} \to L^\times$ by setting $\chi^\mathbf{h}(\mathrm{rec}_K(\varpi_K)) := 1$, and setting $(\chi^\mathbf{h} \circ \mathrm{rec}_K)|_{\mathscr{O}_K^\times} := (\cdot)^{\iota^*(\mathbf{h})}|_{\mathscr{O}_K^\times}$ where $(\cdot)^{\iota^*(\mathbf{h})}$ is the algebraic character in $\mathcal{T}(L)$ defined at the beginning of this subsection. This is crystalline with labeled Hodge-Tate weights $\iota^*(\mathbf{h})$. 

The point is that if $\zeta_0 \in K$ is some choice of primitive $q$th root of unity and $\tau_0 \in \mathrm{Hom}(K,L)$ is some embedding such that $p_I(\iota(\tau_0)) = 1$, then
	\[ \chi^\mathbf{h}(\mathrm{rec}_K(\zeta_0)) = \tau_0(\zeta_0)^{\sum_{(i,j) \in I \times J} ih_{i,j}}. \]
So pick some $\mathbf{h} \in \mathbf{Z}_{>0}^{I \times J}$ such that $\sum_{(i,j) \in I \times J} ih_{i,j}$ is coprime to $q$: this is possible since $(i,q) = 1$ for all $i \in I$. Then $\chi^\mathbf{h}(\mathrm{rec}_K(\zeta_0))$ is actually a primitive $q$th root of unity. Furthermore,
	\[ (\chi^\mathbf{h})^{\otimes (p^f-1)} \equiv 1 \mod \varpi_L, \]
which implies that $(\chi^\mathbf{h})^{\otimes (p^f-1)}$ is induced by a point in the generic fiber $\mathcal{X}_\mathbf{1}$ of the deformation space of the trivial character.

Let $\chi_0 = (\chi^\mathbf{h})^{\otimes(p^f-1)}$. For $m = 0,\dots,q-1$ define
	\[ \rho_m = \left(\bigoplus_{i=1}^{n-1} \chi_0^{\otimes i}\right) \oplus \chi_0^{\otimes(n+m)}. \]
Then $\rho_m$ is induced by a point in $\mathcal{X}^\square_{\overline\rho}$. It is crystalline and its $\tau$-labeled Hodge-Tate weights are exactly
	\[ (p^f-1)h_\tau,2(p^f-1)h_\tau,\dots,(n-1)(p^f-1)h_\tau,(n+m)(p^f-1)h_\tau, \]
which are distinct and nonzero since $h_\tau > 0$ by assumption (here $h_\tau := \iota^*(\mathbf{h})_\tau$) so $\rho_m$ is actually regular. Finally note that
	\[ \{\det(\rho_m(\mathrm{rec}_K(\zeta_0)))\}_{m=0, \dots, q-1} = \mu_q(L). \]
By Theorem \ref{FirstMainTheorem} and Remark \ref{lubintate}, we have thus found regular crystalline points in each of the irreducible components of $\mathcal{X}^\square_{\overline\rho}$.
\end{proof}

\bibliography{refs}

\end{document}